	\newtheorem{theorem}{Theorem}[section]
	\newtheorem{lemma}[theorem]{Lemma}
	\newtheorem{prop}[theorem]{Proposition}
	\newtheorem{cor}[theorem]{Corollary}
	\theoremstyle{definition}
	\newtheorem{definition}[theorem]{Definition}
\newtheorem{example}[theorem]{Example}
\newtheorem{remark}[theorem]{Remark}
\theoremstyle{remark}
\newcounter{step}
\newcommand{\G}{{\mathbb G}}
\newcommand{\Q}{{\mathbb Q}}
\newcommand{\bP}{{\mathbb P}}
\newcommand{\bA}{{\mathbb A}}
\newcommand{\bQ}{{\mathbb Q}}
\newcommand{\calK}{\mathcal K}
\newcommand{\calO}{\mathcal O}
\newcommand{\calM}{\mathcal M}
\newcommand{\calF}{\mathcal F}
\newcommand{\calA}{\mathcal A}
\newcommand{\calB}{\mathcal B}
\newcommand{\calW}{\mathcal W}
\newcommand{\calE}{\mathcal E}
\newcommand{\calU}{\mathcal U}
\newcommand{\calX}{\mathcal X}
\newcommand{\calC}{\mathcal C}
\newcommand{\calR}{\mathcal R}
\newcommand{\calD}{\mathcal D}
\newcommand{\cEe}{\mathcal R(1/12 + \epsilon)}
\newcommand{\wi}{\mathrm{W}_{\mathrm{I}}}
\newcommand{\wii}{\mathrm{W}_{\mathrm{II}}}
\newcommand{\wiii}{\mathrm{W}_{\mathrm{III}}}
\newcommand{\Spec}{\mathrm{Spec}}
\newcommand{\Proj}{\mathrm{Proj}}
\newcommand{\SL}{\mathrm{SL}}
\newcommand{\mb}[1]{\mathbb{#1}}
\newcommand{\BB}{ (_{\Gamma}\!\backslash\!^{\mathbb{B}})^*}
\newcommand{\BBO}{ (_{\Gamma_0}\!\backslash\!^{\mathbb{B}_0})^*}
\title[Moduli of degree one del pezzo surfaces]{Stable pair compactifications of the moduli space of degree one del Pezzo surfaces via elliptic fibrations}
\author{Kenneth Ascher \& Dori Bejleri}
\begin{document}

\maketitle

\section{Introduction}

A smooth del Pezzo surface of degree $d$ is a smooth projective surface with $-K_X$ ample and $K_X^2 = d$. The goal of this paper to construct geometric compactifictions of the moduli space of degree one del Pezzo surfaces. Here geometric means that the moduli space carries a universal family of possibly degenerate del Pezzo surfaces. 

It is natural to study a del Pezzo surface via its anticanonical linear series $|-K_X| : X \dashrightarrow \mb{P}^d$. For $d \geq 3$ this is a closed embedding and for $d = 2$ it is a double cover. However, for $d = 1$ the anticanonical pencil is not a morphism; it has a unique basepoint. The blowup of $X$ at this basepoint is a rational elliptic surface $Y \to \mb{P}^1$ with section given by the exceptional divisor. Equivalently, $X$ may be obtained as the blowup of $\mb{P}^2$ at $8$ points in general position, and the anticanonical pencil is the unique pencil of cubics passing through these points. By the Cayley-Bacharach theorem, there is a unique $9^{th}$ point in the base locus of this pencil which becomes the basepoint of $|-K_X|$. Our strategy is to use the structure of the elliptic fibation $Y \to \mb{P}^1$ to construct a geometrically meaningful compactification of the space of degree one del Pezzo surfaces.

\begin{theorem}[see Sec. \ref{sec:dp} and Fig. \ref{fig:moduli}] There exists a proper Deligne-Mumford stack $\calR = \calR(1/12 + \epsilon)$ parametrizing anti-canonically polarized broken del Pezzo surfaces of degree one with the following properties:
\begin{itemize}
\item The interior $\calU \subset \calR$ parametrizes degree one del Pezzo surfaces with at worst rational double point singularities. 
\item The complement $\calR \setminus \calU$ consists of a unique boundary divisor parametrizing $2$-Gorenstein semi-log canonical surfaces with ample anticanonical divisor and exactly two irreducible components. 
\item The locus $\calR^\circ \subset \calR$ parametrizing surfaces such that every irreducible component is normal is a smooth Deligne-Mumford stack.  \end{itemize}
\end{theorem}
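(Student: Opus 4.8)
The plan is to realize $\calR$ as (essentially) a moduli space of $(1/12+\epsilon)$-weighted stable rational elliptic surfaces and to transfer its geometry to the del Pezzo side through the birational contraction of the anticanonical base point. As in the introduction, blowing up the unique base point of $|-K_X|$ identifies a degree one del Pezzo $X$ with at worst rational double points with a relatively minimal rational elliptic surface $f\colon Y\to\mb P^1$ together with its section $S$ (the exceptional curve); conversely $X=\Proj\bigoplus_{n\ge0}H^0\bigl(Y,n(S+F)\bigr)$ for $F$ a fibre, and $-K_X$ pulls back to $S+F$. First I would promote this to an isomorphism between $\calU$ and the interior of a moduli space $\cMe$ of $(1/12+\epsilon)$-weighted stable rational elliptic surfaces. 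The cleanest construction of $\cMe$ is via (weighted) twisted stable maps to $\overline{\mc M}_{1,1}$, carrying Hassett-style weight $1/12+\epsilon$ on the twelve points of the discriminant $\Delta_f$; the point is that although the base pair $(\mb P^1,\tfrac{1}{12}\Delta_f)$ is not of log general type, the ampleness required for stability is supplied by the non-constant $j$-map, so that $\cMe$ is a proper Deligne--Mumford stack --- properness by the valuative criterion via semistable reduction for elliptic fibrations together with the explicit stable-reduction algorithm for weighted elliptic surfaces, and finiteness of automorphisms because the non-constant $j$-map rigidifies both the fibration and the marked fibres. The numerical coincidence $\deg\Delta_f=12$ is exactly what makes the weight $1/12$ the ``log Calabi--Yau'' value at which $K_Y+S+\tfrac{1}{12}\Delta_f$ is trivial on the fibres, so $1/12+\epsilon$ is the first weight past it.

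Next I would build $\calR$ out of $\cMe$: over the universal elliptic surface $\mc Y\to\cMe$ with section $\mc S$, one forms the relative anticanonical model $\mc X:=\Proj_{\cMe}\bigoplus_{n\ge0}f_*\calO\bigl(n(\mc S+\mc F)\bigr)$, checks that it is a flat family of anticanonically polarized surfaces with the asserted singularity types --- this is where one must show that $\mc S$ together with the $(-2)$-configurations contract to $2$-Gorenstein slc singularities and that the resulting anticanonical class stays ample --- and thereby obtains a morphism $\cMe\to\calR$; I expect this to be an isomorphism of stacks, so that properness and the Deligne--Mumford property pass to $\calR$. The identification of the interior is then immediate: the interior of $\cMe$ parametrizes Weierstrass rational elliptic surfaces, whose anticanonical models are precisely the Gorenstein canonical --- equivalently, rational double point --- degree one del Pezzo surfaces, giving $\calU\cong\cMe^\circ$.

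The heart of the argument is the boundary. Since $1/12+\epsilon$ is the first weight past the log Calabi--Yau value, I expect the compactification to be minimal in the sense of having a single boundary divisor, and I would pin it down by running the stable-reduction algorithm as the twelve nodal fibres degenerate, the relevant criterion being roughly that a set of singular fibres may collide without forcing a modification precisely when it has at most eleven members. The base $\mb P^1$ should then acquire a single node, the surface $Y$ should break into two components glued along a fibre, and contracting the section on the degenerate fibre should produce a surface $X$ with exactly two irreducible components --- each, I expect, a quadric cone $\mb P(1,1,2)$ or a mild degeneration thereof --- glued along a genus one curve through their singular points. Away from those points $X$ is Gorenstein and nodal along a smooth curve, while the double curve passing through the $A_1$ vertices forces $2K_X$, but not $K_X$, to be Cartier, which is the source of the ``$2$-Gorenstein'' behaviour; one then checks that $-K_X$ restricts to an ample $\mb Q$-divisor of the form $\calO(1)$ on each component with matching restrictions along the double curve so that $-K_X$ is ample, and that the local gluings are semi-log canonical. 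As this degeneration is codimension one with irreducible modulus, $\calR\setminus\calU$ is a unique boundary divisor with the stated properties.

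Finally, for the smoothness of $\calR^\circ$, I would argue by deformation theory: the obstructions to deforming the anticanonically polarized slc surface $X$ live in a second tangent-cohomology (or $\mathrm{Ext}^2$) group which I expect to vanish precisely when every component of $X$ is a normal surface with quotient singularities, together with the vanishing of the obstruction to locally trivial deformations along the normal-crossing double curve; over the interior this reduces to the classical fact that $H^2(X,T_X)=0$ for a del Pezzo surface with rational double points. I expect the main obstacle to be the boundary step: actually carrying out the stable reduction --- semistable reduction in families followed by an explicit minimal-model-program run, or the weighted-elliptic-surface algorithm --- to identify the limiting surface and to verify that it is $2$-Gorenstein, semi-log canonical, with ample anticanonical class and exactly two irreducible components. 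The delicate part is the interplay between the section-contraction and the slc condition, since contracting the section on a degenerate fibre need not a priori stay within this class of surfaces, together with the bookkeeping showing that the weight $1/12+\epsilon$ yields exactly one boundary divisor.
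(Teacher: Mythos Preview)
Your overall architecture matches the paper's: construct $\calR(1/12+\epsilon)$ as a slice of the KSBA moduli of weighted stable rational elliptic surfaces, identify the boundary by explicit stable reduction and wall-crossing, and prove smoothness of $\calR^\circ$ by $\bQ$-Gorenstein deformation theory in the style of Hacking. The deformation-theoretic step is essentially what the paper does (vanishing of $H^p(\mathscr{T}^q_{QG,X})$ for $p+q=2$ on each component, then vanishing of $H^1(\calO_X(D))$ to pass to the pair).

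However, your description of the boundary surfaces is wrong in two concrete ways, and this is where the argument would break. First, the two irreducible components of a generic boundary surface are \emph{not} quadric cones $\mb P(1,1,2)$; they are themselves degree one del Pezzo surfaces with canonical singularities. The reason is that when the base $\mb P^1$ breaks, each side carries six of the twelve discriminant points, so each piece is a rational elliptic surface with six $\mathrm I_1$ fibers and a single $\mathrm I_0^*$ fiber at the node (the $\mathrm I_0^*$ arises because the discriminant has order $6$ there). Contracting the section on each piece gives a del Pezzo of degree one, not eight. Second, the double locus is not a genus one curve: an $\mathrm I_0^*$ twisted (pseudo)fiber has reduced support a smooth rational curve, so the two components are glued along a $\mb P^1$ passing through four $A_1$ points on each side. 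It is precisely this $\mathbb Z/2\mathbb Z$ quotient structure at the $\mathrm I_0^*$ fiber (locally the quotient of a nodal Gorenstein surface) that forces index~$2$, and getting this local model right is what makes the $2$-Gorenstein and slc verifications go through.

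Your heuristic ``up to eleven fibres may collide without modification'' is also too coarse. The actual control comes from the full wall-crossing analysis: the $\wii$ wall at $a=1/6$ contracts the section, the $\wiii$ walls at $a=1/k$ and $a=a_0/k$ contract all Type~I pseudoelliptic trees by the time one reaches $1/12+\epsilon$, and Persson's classification caps the number of colliding $\mathrm I_1$ fibers at nine. What survives on the boundary is exactly the two-component $\mathrm I_0^*/\mathrm I_0^*$ configuration (and its non-normal $\mathrm N_1$ variants outside $\calR^\circ$), not a generic eleven-point collision.
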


\begin{figure}[!h]
\includegraphics[scale=.5]{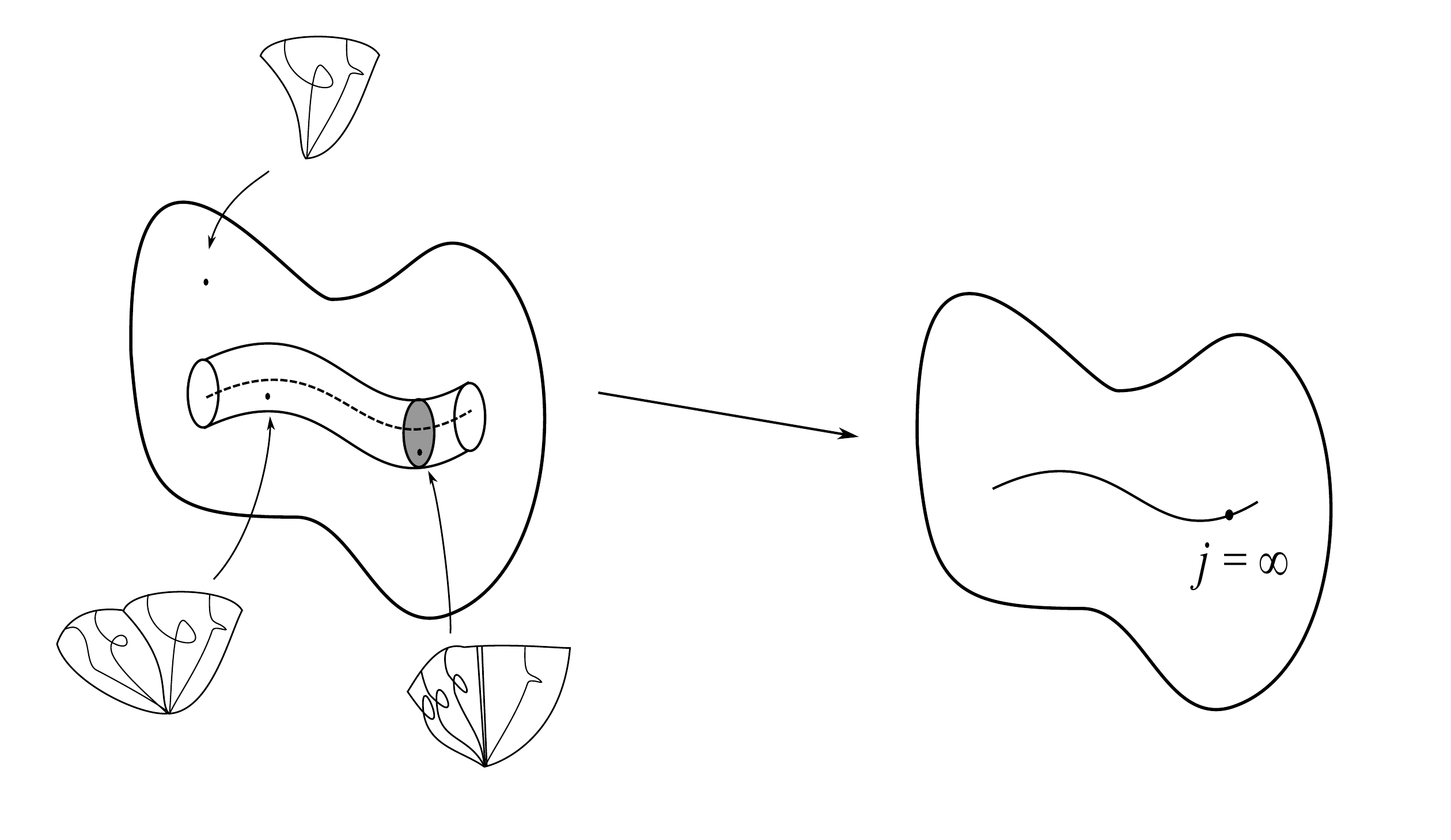}
\caption{The space on the right is Miranda's GIT compactification $W$, and the space on the left is $\calR(1/12 + \epsilon)$. Note that these spaces are birational.  The strictly semistable locus of $W$ is a rational curve with a special point $(j = \infty)$. The ``tube" depicts the locus of $\calR(1/12 + \epsilon)$ parametrizing non-normal surfaces, which is fibered over the strictly semistable locus of $W$. The shaded in piece depicts the fiber over $j=\infty$. The surfaces parametrized by a generic point of each stratum of $\calR(1/12+ \epsilon)$ are depicted. }\label{fig:moduli}
\end{figure}

We provide an explicit description of the surfaces parametrized by the boundary of $\calR(a)$ for $a \le 1/6$ in Theorems \ref{thm:dptypes} and \ref{thm:dpinfinity}. 

The space $\calR(1/12 + \epsilon)$ is the last in a sequence of spaces $\calR(a)$ for $1/12 < a \le 1$ arising as compactifications of the space of rational elliptic surfaces. The existence of these spaces follows from the machinery developed in a series of papers (\cite{calculations, tsm, master}) where the authors study the birational geometry of the moduli space of \emph{weighted stable elliptic surfaces} -- moduli spaces parametrizing pairs of elliptic fibrations with section and marked fibers that are stable pairs in the sense of KSBA. Taking inspiration from Hassett's moduli spaces of weighted stable curves, one can construct proper Deligne-Mumford moduli stacks $\calE_{\calA}$ where $\calA = (a_1,\ldots, a_n)$ is a weight vector keeping track of the weights of the marked fibers.  As one varies $\calA$, the moduli spaces and their universal families are related by divisorial contractions and flips (see \cite{master}). In constructing the spaces $\calR(a)$ above, we give an in depth analysis of the moduli spaces $\calE_{\calA}$ and wall crossings morphisms between them in the case of rational elliptic surfaces.

A generic rational elliptic surface has 12 nodal fibers $F_i$, and so we consider a slice of the moduli space, denoted $\calE^s_{\calA}$, which compactifies the space of pairs $(f : X \to C, S + F)$, where $X$ is a rational elliptic surface with section $S$ and $F = \sum a_iF_i$. For most of this paper we will focus on the case $\calA = (a, \dots, a)$ is a constant  vector and in this case we can define the space $\calR(a) = \calE^s_\calA / S_{12}$. When $a \le 1/6$, the section of every surface parametrized by $\calR(a)$ must be contracted to form the \emph{pseudoelliptic surfaces} of La Nave \cite{ln} (see also Definition \ref{def:pseudo1}) which in the case of rational elliptic surfaces are exactly the degree one del Pezzo surfaces.  

\subsection{Connnection to GIT compactifications and period mappings} One may associate to any elliptic surface with section a \emph{Weierstrass equation} $y^2 = x^3 + Ax + B$ where $A$ and $B$ are sections of line bundles on the base curve. This is the equation cutting out the \emph{Weierstrass model} (see Definition \ref{def:weierstrassmodel}) obtained by contracting all fibral components that do not meet the section. In the case of rational elliptic surfaces, $A$ and $B$ are degree $4$ and $6$ homogeneous polynomials on $\mb{P}^1$. Using this Weierstrass data, Miranda \cite{mir} constructed a GIT compactification of the moduli space of rational elliptic surfaces (see Section \ref{sec:GIT}). We show that our compactification is a certain blowup of the GIT compactification along the strictly semi-stable locus: 

\begin{theorem}[see Theorem \ref{thm:egit}] Let $R = R(1/12 + \epsilon)$ be the coarse moduli space of $\calR(1/12 + \epsilon)$ and $\Delta \subset R$ the boundary divisor parametrizing non-normal surfaces with $U = R \setminus \Delta$. There is a morphism $R \to W$ to Miranda's GIT compactification such that the following diagram commutes.
$$
\begin{tikzcd}
\Delta \arrow[r,hook] \arrow{d}[swap]{j} & R \arrow[d] & U \arrow[d,"\cong"] \arrow[l,hook'] \\ 
\mb{P}^1 \arrow[r] & W & W^s \arrow[l,hook']
\end{tikzcd}
$$
Here $\Delta \to \mb{P}^1$ sends the surface $X \cup Y$ to the $j$-invariant of the double locus. Then $\mb{P}^1 \to W^{sss} \subset W$ maps bijectively onto the strictly semistable locus, and $U \to W^s$ is an isomorphism of the interior of $R$ with the GIT stable locus.
\end{theorem}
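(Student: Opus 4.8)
The plan is to build the morphism $R\to W$ from the Weierstrass models of the universal family over $\calR$, to recognize the interior of $R$ as the GIT stable locus via the universal property shared by both sides, and to pin down the image of the boundary divisor $\Delta$ by an explicit one-parameter degeneration. I use freely Miranda's description (Section~\ref{sec:GIT}) of $W$ as the GIT quotient of the space of Weierstrass data $(A,B)$ with $A\in H^0(\bP^1,\calO(4))$, $B\in H^0(\bP^1,\calO(6))$ modulo $\mathrm{PGL}_2\times\G_m$, his identification of the stable locus $W^s$ with the Weierstrass models of degree one del Pezzo surfaces with rational double points, and his description of $W^{sss}$ as the rational curve of polystable orbits of isotrivial Weierstrass fibrations, with distinguished point $j=\infty$ the locus $\Delta\equiv 0$.

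\emph{The morphism, and $U\cong W^s$.} By the construction of $\calR$ in Section~\ref{sec:dp} (and the analysis of $\calE^s_\calA$ in \cite{master,tsm,calculations}), the universal broken del Pezzo surface over $\calR$ underlies a universal weighted stable elliptic surface; applying the relative Weierstrass model construction fiberwise — contracting fibral components not meeting the section, and converting the twisted fibers of the pseudoelliptic tails into cuspidal Weierstrass fibers following La Nave \cite{ln} — produces a flat family of Weierstrass fibrations over $\bP^1\times\calR$ with $A$, $B$ of degrees $4$, $6$, hence a classifying morphism from $\calR$ to the quotient stack of Weierstrass data. Its image lies in the semistable locus (over $\calU$ by Miranda's stability for genuine rational elliptic surfaces, over $\Delta$ by the computation below); composing with the good quotient to the scheme $W$ and passing to coarse spaces gives $R\to W$. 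Over the interior this recovers the classical Weierstrass classifying map, so the right-hand square commutes; moreover, via the equivalence ``degree one del Pezzo $X$ with rational double points $\leftrightarrow$ rational elliptic surface $Y$ obtained by blowing up the base point of $|-K_X|$ $\leftrightarrow$ Weierstrass data of $Y$'' both $\calU$ and the stable Weierstrass locus are moduli stacks for the same functor, so this morphism is an isomorphism there and induces $U\cong W^s$ on coarse spaces.

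\emph{The boundary.} For $X\cup Y\in\Delta$ the double curve $E=X\cap Y$ is an anticanonical curve of arithmetic genus one on each component, so has a well-defined $j$-invariant $j(E)\in\bP^1$ (finite if $E$ is smooth, $\infty$ if nodal), and letting $X\cup Y$ vary assembles these into the morphism $j\colon\Delta\to\bP^1$. To compute $\Delta\hookrightarrow R\to W$, choose a one-parameter smoothing $\Spec\calO\to\calR$ (DVR with closed point in $\Delta$, generic point in $\calU$), corresponding to a family of Weierstrass data $(A_t,B_t)$ minimal for $t\neq0$. From the local structure of the stable reduction along $\Delta$ — the bubbling off of the pseudoelliptic tail along the twisted fiber that becomes $E$ — the discriminant of degree $12$ concentrates its ``tail part'' at the attaching point $p$, with $\mathrm{ord}_p(A_0,B_0)$ large; running semistable replacement (necessary since the naive limit is strictly semistable but not polystable), the resulting point of $W$ is the polystable isotrivial Weierstrass fibration whose constant $j$-value is the value at $p$ of the limiting $j$-function of $(A_t,B_t)$, namely $j(E)$, since $E$ is the limit of the fiber carried by the tail. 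Hence $\Delta\to W$ factors as $\Delta\xrightarrow{j}\bP^1\to W^{sss}\hookrightarrow W$, which is the commutativity of the left-hand square. Finally $\bP^1\to W^{sss}$ is bijective: distinct $j$ give non-isomorphic polystable orbits, and by Miranda's analysis these, together with the $j=\infty$ orbit $\Delta\equiv0$, exhaust $W^{sss}$; it is the normalization of the rational curve $W^{sss}$.

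\emph{Expected main obstacle.} The delicate point is the boundary computation: matching the GIT limit of $(A_t,B_t)$ with the geometric invariant $j(E)$. That limit is a priori produced by semistable replacement — a non-explicit operation, since the literal limit $(A_0,B_0)$ is strictly semistable but not polystable — so one must write down an explicit local model for degenerations into $\Delta$ (formation of the pseudoelliptic tail and the resulting degenerate Weierstrass fiber over the attaching point) and carry out the Hilbert--Mumford analysis identifying the destabilizing one-parameter subgroup and the polystable representative in its limit. A related subtlety, already in the construction of the morphism, is to check that the relative Weierstrass model stays a flat family of Weierstrass fibrations of degrees $(4,6)$ over all of $\calR$ — including along $\Delta$, rather than degenerating to a fibration over a reducible base — which is exactly where La Nave's twisted-fiber/Weierstrass-fiber dictionary is essential and is the reason the boundary surfaces still determine honest points of Miranda's $W$.
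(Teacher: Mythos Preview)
Your construction of the morphism has a genuine gap, which you flag as the ``expected main obstacle'' but do not resolve. Over $\Delta$ the surfaces are unions $X\cup Y$ of two Type~II pseudoelliptic components glued along a twisted $\mathrm{I}_0^*$ (or $\mathrm{I}_n^*/\mathrm{N}_1$, or $\mathrm{N}_1/\mathrm{N}_1$) pseudofiber; there is no Weierstrass fibration of this union over a single $\bP^1$, and hence no flat family of degree-$(4,6)$ Weierstrass data over $\bP^1\times\calR$ extending the one over $\calU$. La~Nave's dictionary converts twisted fibers on Type~I pseudoelliptic \emph{tails} attached to a surviving main component into cusps of that component's Weierstrass model, but here there is no surviving main component: both pieces are Type~II pseudoelliptics whose associated elliptic surfaces live over two separate copies of $\bP^1$. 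So your global classifying map to Miranda's parameter space simply is not defined along $\Delta$.

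The paper sidesteps this entirely. It builds the morphism only over the normal locus $\calU$ (via the $\mathrm{PGL}_2$-torsor of framings of $H^0(-K_X)$, which yields Weierstrass data landing in $V^s$ by the characterization of Type~A surfaces), and then invokes the valuative extension criterion of \cite{gg} (Theorem~\ref{thm:gg}): a morphism from a dense open of a normal proper scheme extends to a proper target as soon as, for every DVR-valued point, the limit in the target is determined by the limit in the source. This reduces the entire boundary problem to checking that for any one-parameter degeneration $\mathscr{X}_b$ with $\mathscr{X}_0\in\Delta$, the limit of $(A_b,B_b)$ in $W$ depends only on $\mathscr{X}_0$. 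And that step requires no explicit Hilbert--Mumford analysis or semistable replacement: Miranda's Theorem~\ref{thm:mirsss} already says that two strictly semistable Weierstrass data map to the same point of $W^{sss}$ if and only if the $j$-invariants of their $\mathrm{I}_N^*$ fibers agree, so the limit in $W$ is simply $j$ of the gluing pseudofiber of $\mathscr{X}_0$, which is intrinsic to $\mathscr{X}_0$. The key tool you are missing is Theorem~\ref{thm:gg}; once you have it, both the construction of the morphism and the identification of its boundary behavior become short.
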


Now let $\calD^*$ denote the GIT compactification of the space of 12 points in $\bP^1$ up to automorphism. To a Weierstrass equation $y^2 = x^3 + Ax + B$, one may associate a discriminant $\mathscr{D} = 4A^3 + 27B^2$. This gives a rational map $W \dashrightarrow \calD^*$ and it is natural to ask how close this map is to a morphism. Indeed it is easy to see that it cannot extend to all of $W$ (for example the Weierstrass equation of a surface with an $I_7$ fiber is GIT stable but its discriminant is not GIT semistable). We prove that this rational map can be understood by the KSBA compactifications $R(a)$:

\begin{theorem}[see Corollary \ref{cor:gitbasecurve}] There is a morphism $R(1/6) \to \calD^*$ resolving $W \dashrightarrow \calD^*$.  \end{theorem}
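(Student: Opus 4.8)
The plan is to construct $R(1/6)\to\calD^*$ directly from the universal family over $\calR(1/6)$, using the base curve together with its $12$ marked fibers rather than the Weierstrass data, and then to check that it agrees with $W\dashrightarrow\calD^*$ on a dense open set.

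\textbf{Construction of the morphism.} By the theory of weighted stable elliptic surfaces (\cite{master}; see also Section \ref{sec:GIT}), $\calR(1/6)$ carries a universal family $(f\colon\calX\to\calC,\ \calS+\calF)\to\calR(1/6)$, hence a universal base curve $\calC\to\calR(1/6)$ together with a relative degree-$12$ divisor $\calB\subset\calC$: on a given surface $\calB$ is the support of the singular fibers, each counted with its Euler number, i.e.\ with multiplicity $\mathrm{ord}\,\mathscr{D}$ on the Weierstrass model. The crucial feature of the weight $a=\tfrac16$ is that $6a=1<7a$: at most $6$ of the $12$ marked fibers may lie over a single point of a component of $\calC$ (larger boundary coefficients being forbidden in a stable pair), and when more would collide the base $\calC$ sprouts a rational tail onto which the excess is redistributed, exactly as in Hassett's theory of weighted pointed curves. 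Thus $(\calC,\calB)\to\calR(1/6)$ is a family of nodal rational curves on which $\calB$ restricts, fiberwise, to a $12$-point configuration with every point of multiplicity $\le 6$. Such a family canonically determines a morphism $\calR(1/6)\to\calD^*$ to the GIT quotient $\Sym^{12}\bP^1/\!\!/\SL_2=\calD^*$ via the GIT-contraction construction: each base curve is sent to the $12$-point configuration obtained by iteratively collapsing the rational tails that carry at most $6$ marked fibers, colliding those fibers at the attaching node; since at most one component can carry more than $6$ of the $12$ marked fibers, the result is a GIT-semistable configuration on $\bP^1$. As $\calD^*$ is a scheme this factors through the coarse moduli space, yielding $R(1/6)\to\calD^*$.

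\textbf{The main obstacle.} The substantive step is to verify, for \emph{every} weighted stable elliptic surface at weight $\tfrac16$, that the base $\calC$ is a tree of $\bP^1$'s, that every point of $\calB$ has multiplicity $\le 6$, and that each component carries at least $3$ special points (nodes together with marked fibers), so that the construction above applies and, in particular, the output is honestly GIT-semistable. Because $\sum a_i=12\cdot\tfrac16=2$ is exactly the threshold value at which $(K_X+S+F)\cdot S=0$ (the section contracts) and below which the associated pointed base would fail to be Hassett-stable, the components of $\calC$ sit at the edge of stability, so the borderline cases --- a component carrying exactly $6$ marked fibers, i.e.\ a fiber of discriminant type $I_6$, $I_0^*$, and so on, which match the strictly semistable locus of $\calD^*$ --- must be examined by hand. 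This is precisely what the explicit classification of the surfaces parametrized by the boundary of $\calR(a)$ for $a\le 1/6$ in Theorems \ref{thm:dptypes} and \ref{thm:dpinfinity} provides. One must also check that the contracted family is flat over $\calR(1/6)$.

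\textbf{Resolving $W\dashrightarrow\calD^*$.} By Theorem \ref{thm:egit} together with the description of the wall crossings among the $\calR(a)$ in \cite{master}, there is a birational morphism $p\colon R(1/6)\to W$. It is \emph{not} an isomorphism over the indeterminacy locus of $W\dashrightarrow\calD^*$: over a surface with, say, an $I_7$ fiber --- which is GIT-stable as a point of $W$ but whose Weierstrass discriminant $\mathscr{D}$ is GIT-unstable --- the fibre of $p$ is positive-dimensional, parametrizing the ways in which the rational tail carrying the $7$ concentrated marked fibers can break off, and this is exactly the room that makes $R(1/6)\to\calD^*$ regular there. To conclude, pick a dense open $V\subseteq W$ over which $p$ is an isomorphism and which lies both in the GIT-stable locus of $W$ and in the locus where $\mathscr{D}$ is GIT-semistable; over $p^{-1}(V)$ the universal surface is a Weierstrass elliptic surface with irreducible base $\bP^1$, $\calB$ is the zero divisor of $\mathscr{D}$, and both $R(1/6)\to\calD^*$ and the composite $p^{-1}(V)\xrightarrow{\ \sim\ }V\to\calD^*$ send it to $[\mathscr{D}=0]\in\calD^*$. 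Hence the two morphisms $R(1/6)\to\calD^*$ and $(W\dashrightarrow\calD^*)\circ p$ agree on the dense open $p^{-1}(V)$, hence as rational maps; since the former is a morphism, it resolves $W\dashrightarrow\calD^*$.
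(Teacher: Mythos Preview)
Your construction has a genuine gap at the very first step. You posit that $\calR(1/6)$ carries a universal family $(f\colon\calX\to\calC,\ \calS+\calF)$ with a nontrivial universal base curve $\calC$ carrying a degree-$12$ divisor $\calB$. But $a=1/6$ is precisely the $\wii$ wall at which \emph{every} section contracts (Proposition~\ref{prop:typeii6}): the objects parametrized by $\calR(1/6)$ are pseudoelliptic surfaces, not elliptic fibrations, and the Hassett space $\overline{\calM}_{0,(1/6)^{12}}$ collapses to a point since $\sum a_i = 12\cdot\tfrac16 = 2$ gives $\deg(K_C+\sum a_ip_i)=0$ on $\bP^1$, so no log canonical on the base is ample. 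There is therefore no section $\calS$ in the universal family, and the map $f$ contracts every component; the ``universal base curve'' you want to read the $12$-point configuration from does not exist in the form you need. You yourself note that $\sum a_i=2$ is the threshold at which the section contracts and the pointed base fails to be Hassett-stable, but then proceed as if a well-behaved family of pointed curves were still available.

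The paper's argument avoids this by working one step earlier in the wall-crossing. It first builds the map at weight $1/6+\epsilon$, where the forgetful morphism $\calR(1/6+\epsilon)\to\overline{\calM}_{0,(1/6+\epsilon)^{12}}/S_{12}$ of Theorem~\ref{thm:forgetful} is genuine, and then composes with Hassett's morphism $\overline{\calM}_{0,(1/6+\epsilon)^{12}}/S_{12}\to\calD^*$. To descend to $R(1/6)$ it invokes the valuative-criterion lemma of Gibney--Griffiths (Theorem~\ref{thm:gg}): one must check that the image in $\calD^*$ of a point of $R(1/6+\epsilon)$ depends only on its image in $R(1/6)$. The substantive case is the $\wiii$ wall at $a=1/6$ (Theorem~\ref{thm:jinfwalls}), where a trivial $j=\infty$ component glued along an $\mathrm{I}_6$ fiber contracts; there the base curve had two components with six marked points each, so everything lands on the unique strictly semistable orbit of $\calD^*$. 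Your idea of contracting rational tails to produce semistable configurations is the right intuition, but it must be carried out at $a=1/6+\epsilon$ and then pushed down, not read off directly from $\calR(1/6)$.
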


In \cite{hl}, Heckman and Looijenga study a certain period mapping for rational elliptic surfaces. In particular, they show that the moduli space of $12\mathrm{I}_1$ rational elliptic surfaces is locally a complex hyperbolic variety and identify the rational map $W \dashrightarrow \calD^*$ as induced by the period mapping. Furthermore they describe the normalization of the image in $\calD^*$ as the Satake-Baily-Borel (BB) compactification $\mathscr{M}^*$ of a ball quotient, and compare the boundary strata of this compactification with the GIT construction of Miranda discussed above by introducing a space $W^*$ which dominates both. Note that neither $W, \mathscr{M}^*$, nor $W^*$ carry universal families of surfaces, in the sense that they are not coarse moduli spaces of a proper Deligne-Mumford stack with a universal elliptic surface, which is a source of difficulty in studying their boundary strata. 

\begin{theorem}[See Theorem \ref{thm:hlcompare}] There is a birational morphism $R(1/6) \to W^*$ of coarse moduli spaces such that $\calR(1/6)$ is the minimal proper Deligne-Mumford stack above both $\mathscr{M}^*$ and $W$ extending the universal family on $\mathscr{M}$. \end{theorem}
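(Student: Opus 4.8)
The plan is to first build the morphism $R(1/6)\to W^*$ out of the morphisms to Miranda's space $W$ and to the GIT space $\calD^*$ of twelve points that we already have, and then to pin down $\calR(1/6)$ using its interpretation as a moduli stack of KSBA-stable pairs. The coarse space $R(1/6)$ admits a morphism to $W$ and a morphism to $\calD^*$ resolving the rational map $W\dashrightarrow\calD^*$ (this is Corollary~\ref{cor:gitbasecurve}, the morphism to $W$ being the one appearing there and extending the morphism $R(1/12+\epsilon)\to W$ of Theorem~\ref{thm:egit}). Since $R(1/6)$ is normal (the coarse space of the Deligne-Mumford stack $\calR(1/6)$, whose normality follows from the construction in \cite{master}), the morphism to $\calD^*$ factors through the normalization $\mathscr{M}^*$ of its image, yielding $R(1/6)\to\mathscr{M}^*$. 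Over the dense open $\mathscr{M}\subset R(1/6)$ of smooth $12\mathrm{I}_1$ rational elliptic surfaces these two morphisms restrict to the canonical open immersions $\mathscr{M}\hookrightarrow W$ and $\mathscr{M}\hookrightarrow\mathscr{M}^*$, hence agree; so $R(1/6)\to W\times\mathscr{M}^*$ factors through the closure $\Gamma$ of the graph of $W\dashrightarrow\mathscr{M}^*$, and, using the description of $W^*$ in \cite{hl} as a normal common modification of $W$ and $\mathscr{M}^*$ together with the comparison of boundary strata discussed below, through $W^*$. The resulting map $R(1/6)\to W^*$ is birational because all of these spaces are birational to $\mathscr{M}$.

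Next, for the universal family: by construction $\calR(1/6)=\calE^s_{\calA}/S_{12}$ for the constant weight vector $\calA=(1/6,\dots,1/6)$, so $\calR(1/6)$ is a proper Deligne-Mumford stack (properness from \cite{master}) carrying a tautological family of weighted stable elliptic surfaces. Because the weight $1/6$ sits at the threshold where the section must be contracted (\cite{ln}, and the discussion of pseudoelliptic surfaces in the introduction), this is a family of $2$-Gorenstein semi-log canonical broken degree one del Pezzo surfaces, and over $\mathscr{M}$ it restricts to the universal family of smooth degree one del Pezzo surfaces. Together with the first step (composed with $W^*\to W$ and $W^*\to\mathscr{M}^*$) this exhibits $\calR(1/6)$ as a proper Deligne-Mumford stack above both $W$ and $\mathscr{M}^*$ that extends the universal family on $\mathscr{M}$.

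Finally, for minimality: let $\calZ$ be any proper Deligne-Mumford stack whose coarse space admits morphisms to $W$ and to $\mathscr{M}^*$ agreeing over $\mathscr{M}$, and which carries a family of surfaces extending the universal family over $\mathscr{M}$. Over the preimage of $\mathscr{M}$ in $\calZ$ this family, together with its canonically associated elliptic fibration, section, and twelve weight $1/6$ singular fibers, is classified by a morphism to $\calE^s_{\calA}$, hence to $\calR(1/6)=\calE^s_{\calA}/S_{12}$, giving a rational map $\calZ\dashrightarrow\calR(1/6)$. To promote it to a morphism I would use the valuative criterion together with separatedness of $\calR(1/6)$ (as a moduli stack of stable pairs) and properness of $\calZ$: for a trait mapping to $\calZ$ with generic point in the good locus, the special fiber of the pulled-back family is, by the semistable reduction and minimal model program analysis of \cite{ln,master} and the explicit classification in Theorems~\ref{thm:dptypes} and \ref{thm:dpinfinity}, a weighted stable elliptic surface and hence a point of $\calR(1/6)$; separatedness then forces the trait to factor through $\calR(1/6)$. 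Thus $\calZ\to\calR(1/6)$ is defined and compatible with the maps to $W$, $\mathscr{M}^*$, $W^*$ and the universal families, which is the desired minimality.

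I expect the main obstacle to be this last step, and specifically the verification that a proper family of surfaces over such a $\calZ$ extending the universal family over $\mathscr{M}$ is forced to have $2$-Gorenstein semi-log canonical special fibers --- without this, the valuative-criterion argument is circular. Handling it should require matching the explicit list of boundary surfaces of $\calR(a)$ for $a\le 1/6$ in Theorems~\ref{thm:dptypes} and \ref{thm:dpinfinity} against the boundary strata of $W^*$ and $\mathscr{M}^*$ described in \cite{hl}, to show that any degeneration simultaneously compatible with the GIT picture on $W$ and the period/ball-quotient picture on $\mathscr{M}^*$ must coincide with one of the stable surfaces on that list; the same comparison is also what is needed to factor $R(1/6)\to\Gamma$ through $W^*$ in the first step.
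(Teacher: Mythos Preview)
Your construction of the morphism $R(1/6)\to W^*$ matches the paper's: the paper simply invokes that $W^*$, being the normalization of the closure of the diagonal $\mathscr{M}\hookrightarrow W\times\calD^*$, is universal for dominant maps from a normal variety to both $W$ and $\mathscr{M}^*$ agreeing over $\mathscr{M}$; since $R(1/6)$ is normal and has such maps by Corollary~\ref{cor:gitbasecurve}, the morphism follows.

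For minimality, however, the paper proceeds quite differently, and in fact does exactly the stratum-by-stratum comparison you relegate to a closing remark. Rather than attempting to factor an arbitrary $\calZ$ through $\calR(1/6)$ via the valuative criterion, the paper analyzes the fibers of $\varphi:R(1/6)\to W^*$ over each boundary stratum $W^*(F)$. For $F=\mathrm{I}_k$ ($2\le k\le 9$), $\mathrm{II}$, $\mathrm{III}$, $\mathrm{IV}$ one checks that both sides are coarse moduli for the same surfaces, so $\varphi$ is bijective there and hence an isomorphism by Zariski's Main Theorem. Over the remaining strata $W^*(\mathrm{I}_0^*)$ and $W^*(\mathrm{I}_{l,l'}^*)$ the map $\varphi$ genuinely collapses moduli (it remembers only the $j$-invariant of the gluing fiber, respectively forgets the non-isotrivial end components), so $W^*$ is \emph{not} a coarse moduli space there and the universal family cannot extend; since $\calR(1/6)$ \emph{is} the moduli stack of precisely the surfaces these strata correspond to, it is the minimal extension.

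The gap you flag in your valuative-criterion approach is real: nothing forces an arbitrary extension of the family over $\mathscr{M}$ to land in the KSBA moduli problem, so you cannot invoke separatedness of $\calR(1/6)$ without first proving what you want. The paper avoids this circularity by never attempting a universal factorization; it instead shows directly that $R(1/6)\to W^*$ is an isomorphism precisely where $W^*$ already carries the family, and that where it is not an isomorphism $W^*$ cannot carry the family. Your approach would prove a stronger universal property if the gap could be closed, but the paper's argument is both simpler and sufficient for the stated (somewhat informal) minimality claim.
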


Finally, we note that recent exciting work of Laza-O'Grady (see \cite{ol, lo2, lo3}) introduces the ``Hassett-Keel-Looijenga" program which aims to use a $\Proj$ of canonical ring construction to interpolate between GIT and BB compactifications of moduli spaces of surfaces, generalizing earlier ideas of Looijenga. In the final section of this paper, we suggest a similar approach:  use KSBA compactifications to connect GIT and BB compactifications. Not only do we expect that KSBA compactifications will map to the spaces appearing in their construction, but KSBA will also shed light on understanding the boundary more explicitly, as the boundaries produced by GIT and BB compactifications can be quite mysterious (see Section \ref{sec:observations} for more details). \\

\subsection{Further results and discussion}
Let us now make a few remarks to put these results in context. As mentioned above, we provide an explicit description of the surfaces parametrized by the boundary of $\calR(a)$ for $a \le 1/6$ (Theorems \ref{thm:dptypes} and \ref{thm:dpinfinity}). To do this we give a complete description of the wall and chamber structure of the domain of admissible weights $1/12 < a \le 1$, and in fact the $a = 1/12 + \epsilon$ used above can be taken to be any any $a$ in the lowest chamber $1/12 < a < 1/10$. 

The determination of the wall and chamber structure and birational contractions of $\calR(a)$ across walls follows from an in depth understanding of stable reduction process of running twisted stable maps, and lowering weights to the desired $\calA$. This idea goes back to Abramovich-Vistoli \cite{av} and is used by La Nave in \cite{ln}. The general theory is provided in \cite{calculations, tsm, master}, and part of the purpose of this paper is to serve as proof of concept for \emph{loc. cit.}

In Section \ref{sec:walls}, we explicitly calculate the walls appearing in the domain of admissible weights for $\calR(a)$. In Section \ref{sec:ksba} we recall the necessary background and results from the original papers as we aim to keep this paper largely self-contained. Our main result regarding walls is as follows (see Definition \ref{def:walltypes} for the definition of $\mathrm{W}_{\mathrm{III}}$).

\begin{theorem}[see Theorems \ref{thm:type1walls} and \ref{thm:jinfwalls} and Proposition \ref{prop:typeii6}] The walls of type $\wiii$ where a (pseudo)elliptic component of the surfaces parametrized by $\calR(a)$ contracts occur at 
\begin{enumerate}
\item $a = 1/k$ for $2 \le k \le 9$, and
\item $a = a_0/k$ where $ 2 \le k \le 5$ and $a_0 \in \{\frac{5}{6}, \frac{3}{4}, \frac{2}{3}, \frac{1}{2}\}$
\end{enumerate}\end{theorem}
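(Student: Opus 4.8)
The plan is to reduce the statement to a finite enumeration: one classifies the pseudoelliptic components that can occur in a KSBA-stable limit of rational elliptic surfaces, together with their gluing data, and for each such component one computes, by adjunction inside the total space of a one-parameter degeneration, the unique weight at which the relative log minimal model program contracts it. The two families of walls in the statement correspond to the two parts of the boundary of $\calR(a)$: the locus where the double locus has finite $j$-invariant, and the locus over $j = \infty$. Concretely, I would first reprove (or cite from the preceding sections) the local structure of stable reduction: running the twisted stable map through a degeneration and lowering the weights to $(a,\dots,a)$, a stable limit acquires components that are pseudoelliptic surfaces, each obtained from an elliptic surface $V \to \bP^1$ with $\chi(\calO_V) = N$ by contracting its section and glued to the rest of the surface along one of its fibers (for a component at the end of a chain) or two (for an interior component).

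Next I would pin down the discrete data of such a component: the invariant $N$, the Kodaira types of the gluing fibers, and the number $m$ of weight-$a$ marked fibers it carries. These are constrained — the discriminant degree $12N$ of $V$ must be accounted for by the $m$ marked $\mathrm{I}_1$ fibers together with the gluing fibers, the gluing fibers on adjacent components must match so that the total surface is $2$-Gorenstein semi-log canonical, and summing the invariants over all components must reproduce those of a rational elliptic surface — and these constraints leave only finitely many admissible configurations. For a configuration supported over a generic point of the double locus I would then compute the restriction to the pseudoelliptic component $Z$ of the log canonical divisor $K_\calX + D$ of the degeneration, where $D$ is the closure of $S + a\sum_i F_i$: since the section is contracted on $Z$, adjunction expresses this restriction as $K_Z$ plus the images of the gluing fibers plus $m a$ times a pseudofiber, up to the Different, and one reads off the weight at which the $K$-negative extremal ray collapsing $Z$ first appears. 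The admissible configurations yield contraction weights $a = 1/k$, and the enumeration forces $k$ to range exactly over $2,\dots,9$: a component carrying a single marked fiber would contract only at the excluded endpoint $a = 1$, while one carrying ten or more marked fibers would force the complementary surface to violate the discriminant-degree or Gorenstein constraints. This is the content of Theorem \ref{thm:type1walls}, with the borderline value $a = 1/6$ — where the section of the main component itself must be contracted — handled separately by Proposition \ref{prop:typeii6}, which also places it in the list.

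For configurations supported over $j = \infty$ the argument is the same, but the elliptic surfaces involved carry the extra structure of multiplicative reduction (and the attendant $\mathbb{Z}/2$), the stable limits contain pseudoelliptic trees of a more intricate combinatorial shape, and the invariant playing the role of $N$ becomes fractional. The same adjunction computation then produces contraction weights $a = a_0/k$, where $a_0 \in \{5/6, 3/4, 2/3, 1/2\}$ are the finitely many fractional invariants that actually occur and $2 \le k \le 5$, the bound coming again from the constraint on how many marked fibers a contractible component can carry; this is Theorem \ref{thm:jinfwalls}. The theorem is the conjunction of these two computations, together with the verification that each listed weight is a genuine wall — which I would establish by exhibiting, for each admissible configuration, a one-parameter family whose stable model actually changes at that weight — and that no $\wiii$ contraction occurs at any other weight, which follows from completeness of the enumeration.

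The main obstacle is precisely that enumeration: proving the classification of admissible pseudoelliptic configurations is complete, in particular the bounds $k \le 9$ and $k \le 5$ and the exact list $\{5/6, 3/4, 2/3, 1/2\}$, and carrying out the local analysis at $j = \infty$, where one must determine exactly which pseudoelliptic trees the degenerating Weierstrass family produces. Once the configurations are pinned down, extracting the weights is routine adjunction bookkeeping of the type already prepared in the preceding sections.
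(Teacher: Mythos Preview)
Your overall strategy---enumerate the admissible pseudoelliptic components and compute for each the threshold at which it contracts---is the right one, but your geometric picture is inverted, and with it the origin of the constants $a_0$. You attribute the walls $a = 1/k$ ($2 \le k \le 9$) to Theorem \ref{thm:type1walls} and the walls $a = a_0/k$ to Theorem \ref{thm:jinfwalls}; the paper has it the other way around. The walls $a = 1/k$ are exactly the $j = \infty$ walls of Theorem \ref{thm:jinfwalls}: a \emph{trivial} isotrivial $j = \infty$ component carrying $k$ marked fibers (arising when $k$ nodal fibers collide, as in Example \ref{ex:in}) contracts onto its attaching $\mathrm{I}_k$ (pseudo)fiber at $a = 1/k$, and the bound $k \le 9$ is Persson's classification (a rational elliptic surface cannot have $\ge 10$ nodal fibers colliding). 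Conversely, the walls $a = a_0/k$ are the Type I pseudoelliptic contractions of Theorem \ref{thm:type1walls}: a Type I pseudoelliptic $Z$ with $k$ marked fibers ($2 \le k \le 5$, by Lemma \ref{lem:typeIRa}) is glued along an intermediate fiber $A \cup E$ of type $\mathrm{II}$, $\mathrm{III}$, $\mathrm{IV}$, or $\mathrm{N}_1$, and by Proposition \ref{prop:typeIII} it contracts to a point precisely when the coefficient $ka$ of $A$ drops to the log canonical threshold of the corresponding minimal Weierstrass cusp. Those thresholds are the constants $a_0$ of Theorem \ref{thm:transitions}---that is where the list $\{5/6, 3/4, 2/3, 1/2\}$ comes from, not from any ``fractional invariant'' at $j = \infty$.

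The mechanism you propose---adjunction in the total space of a one-parameter degeneration---is not what drives the computation for the $a_0/k$ walls. The key point (Proposition \ref{prop:typeIII}) is that $Z$ contracts to a point if and only if the $E$-component of the intermediate fiber to which it is attached contracts in the log canonical model of the \emph{adjacent} component; this is governed entirely by the log canonical threshold of the Weierstrass cusp on that component, and the threshold values were already tabulated in Theorem \ref{thm:transitions}. Your proposal as written would not arrive at the correct values of $a_0$, because you have misidentified the geometric source of those constants.
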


Finally, to prove that $\calR^\circ$ is smooth, we utilize Hacking's study of $\bQ$-Gorenstein deformations in \cite{hacking} (see Section \ref{sec:hacking} for a background on Hacking's work, and Section \ref{sec:smooth} for the proof that $\calR^\circ$ is smooth).

\subsection{Related work} Several moduli spaces of del Pezzos have been studied previously using GIT \cite{ishii}, root lattices \cite{seki1, seki2}, pencils of quadrics \cite{hkt1} (degree $4$ case), stable pairs and tropical geometry \cite{hkt} (degree $d \geq 2$ case, and the inspirational for our title), and Gromov-Hausdorff limits \cite{odaka}. We note that in \cite{adesurfaces}, Alexeev and Thompson construct a stable pair compactification for the moduli space of rational elliptic surfaces with a chosen nodal fiber. 

\subsection{Future work} In future work, we will continue the story by studying the boundary geometry of $\calR(1/12 + \epsilon)$. We will explore the Kirwan desingularization \cite{kirwan} of Miranda's GIT quotient, and relate this with $\calR(a)$. In addition, we plan to discuss connections of our work with the Gromov-Hausdorff limit approach of Odaka-Spotti-Sun \cite{odaka}. These are examples of the general philosophy that birational compactifications of moduli spaces of surfaces should be related by a sequence of wall crossings of KSBA moduli spaces of stable pairs as one varies the coefficients of the boundary divisor.

\subsection*{Acknowledgments} We benefited from conversations with Dan Abramovich, Valery Alexeev, Kristin DeVleming, Patricio Gallardo, Brendan Hassett, Giovanni Inchiostro, S\'andor Kov\'acs, Gabriele La Nave, Radu Laza, Eduard Looijenga, and Amos Turchet. Research of K.A. supported in part by an NSF Postdoctoral Fellowship. Research of D.B. is supported in part by funds from NSF grant DMS-1500525 (P.I. Dan Abramovich). K.A. thanks the Mathematics Department at the University of Washington for a pleasant visit during which this work was carried out.
 
\section{Elliptic surfaces} We begin with a review of the geometry of rational elliptic surfaces Our discussion of elliptic surfaces follows \cite{master}, and is largely influenced by \cite{mir3}.

\begin{definition}
An irreducible \textbf{elliptic surface with section} ($f: X \to C, S)$ is an irreducible surface $X$ together with a surjective proper flat morphism $f: X \to C$ to a smooth curve $C$ and a section $S$ such that:
\begin{enumerate} 
\item the generic fiber of $f$ is a stable elliptic curve, and
\item the generic point of the section is contained in the smooth locus of $f$. 
\end{enumerate}
We call the pair $(f: X \to C, S)$ \textbf{standard} if all of $S$ is contained in the smooth locus of $f$.
\end{definition}

Note that we only require that the generic fiber is a \emph{stable} elliptic curve. 

\begin{definition} A surface is \textbf{semi-smooth} if it only has 2-fold normal crossings (locally $x^2 = y^2$) or pinch points (locally $x^2 = y^2z$). A \textbf{semi-resolution} of a surface $X$ is a proper map $g: Y \to X$ such that $Y$ is semi-smooth and $g$ is an isomorphism over the semi-smooth locus of $X$. \end{definition}

Recall that a surface is \emph{relatively minimal} if it is semi-smooth and there are no $(-1)$-curves in any fiber. In particular any relateively minimal elliptic surface with section is standard, and so there are finitely many fiber components not intersecting the section. Contracting these yields an elliptic surface with all fibers reduced and irreducible.

\begin{definition}\label{def:weierstrassmodel} A \textbf{minimal Weierstrass fibration} is an elliptic surface obtained from a relatively minimal elliptic surface by contracting all fiber components not meeting the section. We call the output of this process a \textbf{Weierstrass model}. \end{definition}

The geometry of an elliptic surface is largely influenced by the \emph{fundamental line bundle} $\mathscr L$.

\begin{definition} The \textbf{fundamental line bundle} of a standard elliptic surface $(f: X \to C, S)$ is $\mathscr L := (f_* \mathscr{N}_{S/X})^{-1}$, where $\mathscr{N}_{S/X}$ denotes the normal bundle of $S$ in $X$. For an arbitrary elliptic surface we define $\mathscr{L}$ as the line bundle associated to its semi-resolution. \end{definition}

We remark that since $\mathscr{N}_{S/X}$ only depends on a neighborhood of $S$ in $X$, the line bundle $\mathscr{L}$ is invariant under taking a semi-resolution or Weierstrass model of a standard elliptic surface.  Furthermore, we note that $\mathscr{L}$ enjoys many nice properties. In particular, $\deg(\mathscr{L})|_C \geq 0$, is independent of choice of section $S$, and determines the canonical bundle of $X$. 

If $(f: X \to C, S)$ is a smooth relatively minimal elliptic surface, then $f$ has finitely many singular fibers which are each unions of rational curves with possibly non-reduced components. Recall that the dual graphs are ADE Dynkin diagrams. Furthermore, the possible singular fibers have been classified by Kodaira-Ner\'on. We refer the reader to \cite[Table 1]{calculations} for the complete classification. However, we point out the definition of the fiber types $\mathrm{N}_k$ for $k = 0,1, 2$, which appear on elliptic surfaces with nodal generic fiber and arise when studying slc surfaces (see \cite[Section 5]{calculations}). 

\begin{definition}\label{def:nk} The fibers $\mathrm{N}_k$ are the slc fiber types with Weierstrass equation $y^2 = x^2(x-t^k)$ for $k = 0, 1, 2$. \end{definition}

In the sequel the following surfaces appear: 

\begin{definition}\label{def:pseudo1} A \textbf{pseudoelliptic surface} is a surface $Z$ obtained by contracting the section of an irreducible elliptic surface pair $(f: X \to C, S)$. For any fiber of $f : X \to C$, we call its pushforward to $Z$ a \textbf{pseudofibers} of $Z$. We call $(f: X \to C, S)$ the associated elliptic surface to $Z$. \end{definition}

\subsection{Rational elliptic surfaces}

We are now ready to define when an elliptic surface is \emph{rational}. We note that there are many equivalent definitions. We will define them as follows, and then give some references and discuss equivalent notions.

\begin{definition} We say that an irreducible elliptic surface with section $(f:X \to C, S)$ is \textbf{rational} if $C \cong \bP^1$ and $\deg(\mathscr{L}) = 1$. \end{definition} 

The fact that this definition characterizes rational elliptic surfaces is the content of Lemma III.4.6 of \cite{mir3}. It turns out that all rational elliptic surfaces arise as the blow up of the base locus of a pencil of cubic curves inside $\bP^2$ (see Lemma IV.1.2 of \cite{mir3}). 

\begin{remark}Let $C_1$ and $C_2$ be two (distinct) smooth cubic curves in $\bP^2$. Then the pencil generated by these curves has 9 base points, and blowing up these 9 points in $\bP^2$ gives a morphism $\pi: X \to \bP^1$, where $X$ is a (fibered) rational surface with fibers elliptic curves. In particular it is relatively minimal. Moreoever, the canonical class of $X$ is $-C_1$ and $K_X^2 = 0$. The section $S \subset X$ is given by the last exceptional divisor. In this case, it is clear that $S^2 = -1$ and so $\deg(\mathscr{L}) = 1$ (so that $\calO(1) \cong \mathscr{L}$). The fact that all rational elliptic surfaces are the blowup of $\bP^2$ at the base points of a pencil of generically smooth cubic curves is the content of Lemma IV.1.2 of \cite{mir3}.\end{remark}

Recall from Definition \ref{def:weierstrassmodel} the notion of a \emph{Weierstrass fibration}. It turns out (see Section II.5 of \cite{mir3}) that a rational elliptic surface is defined (locally) by a Weierstrass form: $y^2 = x^3 + Ax + B$, where $A$ and $B$ are sections of $\calO(4)$ and $\calO(6)$ respectively, and the \emph{discriminant} $\mathscr{D} = 4A^3 + 27B^2$ is a section of $\mathscr{L}^{\otimes 12} \cong \calO(12)$ which is not identically zero.

\begin{remark}\label{rmk:singfibers} In fact, since the number of singular fibers of a Weierstrass fibration over a projective curve $C$ is given by $12\deg(\mathscr{L}) = 12\deg(\calO(1)) = 12$ counted properly (see Lemma II.5.7 of \cite{mir3}), a rational elliptic surface has generically 12 (nodal) singular fibers. In this context, \emph{counting properly} means that the singular fiber is weighted by the order of vanishing of the discriminant. Equivalently, the discriminant is a degree $12$ divisor of the base rational curve. \end{remark}

\subsection{Rational elliptic surfaces and degree one del Pezzo surfaces}

\begin{definition}\label{def:dp} Recall a \textbf{degree $n$ del Pezzo surface} is a surface $X$ with at worst canonical singularities such that $-K_X$ is ample and $K_X^2 = n$. \end{definition} 

\begin{remark} It follows by Castelnuovo's Theorem that a del Pezzo surface is necessarily rational. \end{remark} 

Given a degree one del Pezzo surface, the anticanonical linear series $|-K_X| : X \dashrightarrow \mb{P}^1$ has a unique base point $p$. Blowing up along $p$ resolves the basepoint producing a morphism $f: Y = Bl_p(X) \to \mb{P}^1$ with section $S$ the exceptional divisor. The fibers of $f$ are necessarily $K_Y$-trivial curves. It follows by the adjunction formula that $f$ is a genus one fibration with section $S$, i.e. $(f: Y \to \mb{P}^1, S)$ is a rational elliptic surface. 

Conversely, given a rational elliptic surface $(f : Y \to \mb{P}^1,S)$ with at worst rational double point singularities and all fibers being irreducible, e.g. having only twisted or Weierstrass fibers (see Definition \ref{def:fibertypes}), we may blow down the section to obtain a pseudoelliptic surface $X$. By Kodaira's canonical bundle formula, one can check that the pseudofiber class $f$ is ample and linear equivalent to $-K_X$ so that $X$ is a degree one del Pezzo surface. 

This relation between rational elliptic surfaces and degree one del Pezzo surfaces is the main idea behind our construction of the space $\calR(1/12 + \epsilon)$ (see Definition \ref{def:Ra} and Section \ref{sec:dp}) compactifying the moduli space of degree one del Pezzo surfaces. 

\begin{remark} One can obtain a degree one del Pezzo surface $X$ by blowing up $\mb{P}^2$ in $8$ (possibly infinitely near) points and then contracting $(-2)$-curves. By the Cayley-Bacharach theorem, there exists a unique pencil of cubics in $\mb{P}^2$ through these $8$ points that passes through a unique $9^{th}$ point $p$. This becomes the anticanonical pencil of $X$ with basepoint $p$. \end{remark}

\section{Preliminaries on twisted stable maps} 

In this section we review some facts about twisted stable maps that we will use to determine the limits of families of elliptic surfaces. Twisted stable maps are used to compute degenerations of elliptic surface pairs with all coefficients $1$. For more detail see \cite{av, av2, tsm}. 

Briefly, a twisted stable map to a proper Deligne-Mumford stack $\calM$ with projective coarse space $M$ is a representable  morphism $(\calC, \Sigma^\calC) \to \calM$ where
\begin{enumerate}
    \item $\calC$ is an orbifold curve with trivial generic stabilizer whose coarse space $C$ is a nodal curve,
    \item $\Sigma^\calC \subset \calC$ is a collection of marked points in the smooth locus including the smooth points with nontrivial stabilizers,
    \item The map $(C, \Sigma^C) \to M$ of coarse spaces is a stable map in the sense of Konsetvich. 
\end{enumerate}
There is a proper moduli stack of twisted stable maps of fixed degree \cite{av2} and it can be used to construct stacks of fibered surfaces in the case where the target $\calM = \overline{\calM}_{g,n}$ \cite{av, tsm}. 

Relevant for us is the space of twisted stable maps to $\overline{\calM}_{1,1}$ inducing a degree $12$ map on coarse spaces.  Indeed given a rational elliptic surface $(f : X \to \mb{P}^1,S + F)$ with only $\mathrm{I}_1$ singular fibers all of which are marked with coefficient one, there is a morphism $\mb{P}^1 \to \overline{\calM}_{1,1}$ and we can understand degenerations of the surface by degenerating in the space of twisted stable maps.  We list the main features of such degenerations that will be useful for us. 
\subsubsection{TSM Conditions} \label{tsmconditions}
\begin{enumerate}
    \item When the source orbicurve degenerates to a nodal curve, the stabalizer group at the node must act by dual weights on the two branches (see \cite[Definition 3.2.4]{av}). 
    \item When marked points with trivial stabilizer collide, a rational curve carrying the markings sprouts off and maps by a constant map to the target. 
    \item When a marked point with trivial stabilizer collides into the a node, the node is blown up to an isotrivial component with the same stabilizers at the two nodes and carrying the marking. 
    \item The total degree of the coarse map stays constant.
\end{enumerate}

In particular, condition $(1)$ tells us that any time two surfaces are attached along fibers, they must either be attached along nodal fibers, or in pairs consisting of $\mathrm{I}_n^*/\mathrm{I}_m^*/\mathrm{N}_1$ fibers, or in pairs $\mathrm{II}/\mathrm{II}^*, \mathrm{III}/\mathrm{III}^*$ and $\mathrm{IV}/\mathrm{IV}^*$. Furthermore, condition $(4)$ implies that the total number of nodal marked fibers in the degeneration of a marked rational elliptic surface must be $12$ (counted with multiplicity). 

\begin{remark} Deopurkar introduced the space $\mathrm{BrCov}_d(\overline{\calM}_{1,1}, b)$ of branched covers of a stacky curve in \cite{deopurkar}. He proves this is a smooth Deligne-Mumford stack that admits a birational morphism to the space of twisted stable maps. The morphism $\mathrm{BrCov}_d(\overline{\calM}_{1,1}, b) \to \calK(\mathscr{X}, d)$ is given by blowing up the isotrivial locus. As a result, the space $\mathrm{BrCov}_d(\overline{\calM}_{1,1},b)$ furnishes a smooth compactification of the space of elliptic surfaces. Later in Section \ref{sec:dp} we will study smoothness properties of our compactification $\calR(1/12 + \epsilon)$ that result from a combination of twisted stable maps and the minimal model program. \end{remark}

\section{Moduli spaces of weighted stable elliptic surface pairs}\label{sec:ksba}
In this section we recall the construction of KSBA compactifications $\calE_\calA$ of the moduli space of elliptic surfaces with section and $\calA$-weighted marked fibers. We follow the results of \cite{calculations}, \cite{tsm}, and \cite{master}. 

\subsection{Preliminaries from the MMP}

First, we recall the definition of a stable pair in the sense of the MMP. Let $X$ be a reduced projective variety and $D \subset X$ a $\bQ$-divisor.

\begin{definition} Let $X$ be a normal variety so that $K_X + D$ is $\bQ$-Cartier, and suppose there is a log resolution $f: Y \to X$ such that $$K_Y + \sum a_E E = f^*(K_X + D),$$ where the sum goes over all irreducible divisors on $Y$. Then the pair $(X,D)$ is \textbf{log canonical} (resp. \textbf{log terminal}) if all $a_E \leq 1$ (resp. $ < 1$). \end{definition}

\begin{definition}\label{def:slc} The pair $(X,D)$ has \textbf{semi-log canonical} (resp. \textbf{semi-log terminal}) singularities (or is an \textbf{slc} [resp. \textbf{slt}] pair) if:
\begin{itemize}
\item The $\bQ$-divisor $K_X + D$ is $\bQ$-cartier, 
\item the variety $X$ is $S2$,
\item $X$ has only double normal crossings in codimension 1, and
\item if $\nu: X^{\nu} \to X$ is the normalization, then the pair $(X^{\nu}, \nu_*^{-1}D + D^{\nu})$ is log canonical (resp. log terminal), where $D^{\nu}$ denotes the preimage of the double locus on $X^{\nu}$. 
\end{itemize}
\end{definition}

\begin{definition} A pair $(X,D)$ as above is a \textbf{stable pair} if
\begin{itemize}
\item $(X,D)$ is slc, and
\item $\omega_X(D)$ is ample.
\end{itemize}
\end{definition}

\subsection{Weighted stable elliptic surface pairs}
In \cite{master}, we define KSBA compcatifcations (c.f. \cite{ksb} and \cite{kp}) $\calE_\calA$ compactifying the moduli space of log canonical models $(f: X \to C, S + F_\calA)$ of $\calA$-weighted Weierstrass elliptic surface pairs (see Definition \ref{def:weierstrassmodel})  by allowing our surface pairs to degenerate to semi-log canonical (slc) pairs following the log minimal model program. For each \textit{admissible weight vector} $\calA$, we obtain a compactification $\calE_\calA$. These spaces parameterize slc pairs $(f: X \to C, S + F_{\calA})$, where $(f: X \to C, S)$ is an slc elliptic surface with section, and $F_{\calA} = \sum a_i F_i$ is a weighted sum of marked fibers with $\calA = (a_1, \dots, a_n)$ and $0 < a_i \leq 1$. 
  Our goal is then to compare moduli spaces for elliptic surfaces whose fibers have various weights $\calA$.

\smallskip

In particular, we prove the following theorems.

\begin{theorem}\cite[Theorem 1.1 \& 1.2]{master}\label{thm:stack} For admissible weights $\calA$ there exists a moduli pseudofunctor whose main component $\calE_{v, \calA}$ is representable by a proper Deligne-Mumford stack of finite type. The boundary of $\calE_{v, \calA}$ parametrizes $\calA$-broken elliptic surfaces (see Theorem \ref{thm:masterthm}). \end{theorem}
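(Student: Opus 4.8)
The plan is to exhibit the moduli pseudofunctor whose objects over a base $T$ are flat families $(f\colon X \to C \to T,\, S + F_\calA)$ of slc elliptic surface pairs with section (Definition \ref{def:slc}) and fixed numerical invariants $v$, subject to Koll\'ar's condition on the relative reflexive powers of $\omega_{X/T}(F_\calA)$ so that the moduli problem descends correctly in the non-normal case, where $F_\calA = \sum a_i F_i$ records the marked fibers with weight vector $\calA$. One then runs the standard KSBA verification on its main component $\calE_{v,\calA}$: (i) boundedness, (ii) local closedness of the defining conditions inside a Hilbert scheme, (iii) finiteness and unramifiedness of automorphisms, and (iv) the valuative criteria for separatedness and properness. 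Granting (i)--(iii), the stack is realized as a quotient $[H/\mathrm{PGL}_N]$, where $H$ parametrizes $m$-log-canonically embedded pairs for a fixed sufficiently divisible $m$, giving a Deligne--Mumford stack of finite type; point (iii) is essentially formal, since ampleness of $\omega_X(F_\calA)$ forces $\mathrm{Aut}(X, S+F_\calA)$ finite and reduced, the pair being of log general type along every stratum.

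For boundedness the key is the Weierstrass model (Definition \ref{def:weierstrassmodel}): every such pair is recovered from a minimal Weierstrass fibration whose data $(A,B)$ are sections of fixed line bundles over a curve $C$ of bounded arithmetic genus, and whose discriminant has fixed degree $12\deg(\mathscr L)$, bounding the singular fibers; the log canonical model $(X, S + F_\calA)$ is then obtained from the Weierstrass model by a sequence of MMP steps dictated by $\calA$, and this whole process is bounded. For local closedness one invokes the standard openness of slc singularities in $\bQ$-Gorenstein families together with openness of ampleness of $\omega_X(F_\calA)$, plus the fact that the elliptic fibration structure --- the existence of $f$ and $S$ with prescribed fiber and section numerical data --- is cut out by closed and open conditions once broken configurations are allowed as limits. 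Admissibility of $\calA$ is precisely what guarantees the relevant log canonical models exist and that weight reduction is well behaved.

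The crux is the valuative criterion of properness: given a family of stable pairs over the punctured spectrum of a DVR, one must produce a unique stable limit. After a finite base change, the composite $C_K \to \overline{\calM}_{1,1}$ classifying the generic fiber extends, by properness of the space of twisted stable maps to $\overline{\calM}_{1,1}$ of the fixed degree $12$, to a twisted stable map over the whole base; this produces a limiting family of elliptic surfaces whose central fiber degenerates according to the TSM rules recalled above (dual stabilizer weights at nodes of the orbicurve, sprouting of isotrivial bridge components carrying colliding markings, conservation of total degree). One then runs the relative log MMP over the limiting base curve, and finally lowers the fiber coefficients to $\calA$ and contracts accordingly, following La Nave and the MMP; when the $a_i$ are small, the section of a component is forced to contract, producing pseudoelliptic components (Definition \ref{def:pseudo1}) glued in trees. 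The result is the $\calA$-broken elliptic surface pair of Theorem \ref{thm:masterthm}, which one verifies is slc with $\omega_X(F_\calA)$ ample; uniqueness of the limit, hence separatedness, follows from uniqueness of log canonical models.

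The main obstacle is the combinatorial control of this last step. One must enumerate which fibral curves and which pieces of the section are contracted during the MMP run, identify all the boundary fiber types that can appear --- the $\mathrm{N}_k$ fibers of Definition \ref{def:nk}, twisted and Weierstrass fibers, and the dual pairs $\mathrm{I}_n^*/\mathrm{I}_m^*$, $\mathrm{II}/\mathrm{II}^*$, $\mathrm{III}/\mathrm{III}^*$, $\mathrm{IV}/\mathrm{IV}^*$ forced by the dual-weight condition on nodes --- and prove that the resulting configuration always has the prescribed broken shape and admits no further contraction, so that the limit is genuinely a stable pair and the boundary of $\calE_{v,\calA}$ parametrizes exactly the $\calA$-broken surfaces. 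This classification, which simultaneously yields the structure theorem \ref{thm:masterthm}, is where the real work lies.
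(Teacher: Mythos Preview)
The paper does not prove this statement at all: it is quoted verbatim from \cite[Theorem~1.1 \& 1.2]{master} and used as a black box, with the surrounding text only indicating that the proof of properness there proceeds via an explicit stable reduction algorithm (twisted stable maps followed by running the log MMP as weights are lowered). Your outline is a faithful reconstruction of exactly that strategy --- boundedness via Weierstrass data, the Hilbert scheme/$\mathrm{PGL}_N$ quotient presentation, and the valuative criterion via twisted stable maps plus MMP producing the $\calA$-broken limits --- so there is nothing to compare against in this paper; you are essentially sketching the argument of \cite{master}, not an alternative to it.
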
 

\begin{remark} As the correct deformation theory for moduli of stable pairs has not yet been settled, we work with the normalization of the moduli stack (see Remark 3.7 and the discussion following in \cite{master} for more details). \end{remark}

In fact, the proof of properness is explicit -- we give a concrete stable reduction algorithm that allows us to determine what surfaces the boundary of our moduli space parametrizes. To reiterate, we state the following theorem.

\begin{theorem}\cite[Theorem 1.6]{master}\label{thm:masterthm} The boundary of the proper moduli space $\calE_{v,\calA}$ parametrizes
$\calA$-broken stable elliptic surfaces, which are pairs $(f: X \to C, S + F_{\calA})$
consisting of a stable pair $(X, S + F_\calA)$ with a map to a nodal curve $C$ such that:
\begin{itemize}
\item $X$ is an slc union of elliptic surfaces with section S and marked fibers, as well as \item chains of pseudoelliptic surfaces of type I and II (Definitions \ref{def:pseudo}, \ref{def:pseudotypeII}, and \ref{def:pseudotypeI}) contracted
by $f$ with marked pseudofibers. \end{itemize} \end{theorem}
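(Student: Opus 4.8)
The plan is to prove the boundary description by an explicit stable reduction algorithm, factored into two steps: first produce a limit with all marked-fibre coefficients equal to $1$ using twisted stable maps, then run the minimal model program to lower the weights to $\calA$. Since $\calE_{v,\calA}$ is proper by Theorem \ref{thm:stack}, it suffices to treat a one-parameter family: a smooth weighted stable elliptic surface pair $(f_\eta\colon X_\eta\to C_\eta,\ S_\eta+F_{\calA,\eta})$ over the generic point of a DVR $R$, for which we must exhibit the unique limit over the closed point (after a finite base change, as the valuative criterion allows).

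Step one: pass to the Weierstrass model, retain only the classification data of the marked fibres, and consider the induced moduli map to $\overline{\calM}_{1,1}$ (or its analogue for the relevant fibre types). By the theory of twisted stable maps and the TSM conditions recalled above, after a finite base change this map extends to a twisted stable map whose source is an orbifold nodal curve $\calC_0$; pulling back the universal family and taking the associated elliptic surface yields a family $\calX\to\calC\to\Spec R$ whose central fibre $\calX_0$ is an slc union of elliptic surfaces with section, glued along fibres precisely in the patterns permitted by the node/dual-weight condition (nodal--nodal, $\mathrm{I}_n^*$--$\mathrm{I}_m^*$--$\mathrm{N}_1$, $\mathrm{II}$--$\mathrm{II}^*$, $\mathrm{III}$--$\mathrm{III}^*$, $\mathrm{IV}$--$\mathrm{IV}^*$), with the section $\calS$ and marked fibres $\calF$ extending; this is the limit in $\calE_{v,(1,\dots,1)}$.

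Step two: run the relative log MMP for $(\calX,\ \calS+\calF_\calA)$ over $\Spec R$, contracting the $K_{\calX}+\calS+\calF_\calA$-negative curves. On a component over which $(K+S+F_\calA)\cdot S<0$ — an intersection computation governed by how small the $a_i$ are — the section is contracted, producing a pseudoelliptic surface in the sense of Definition \ref{def:pseudo1}; on a component where, after this, a pseudofibre becomes log-negative, a further contraction ``rotates'' the base, and iterating organizes such components into chains of pseudoelliptic surfaces (of the two types in Definitions \ref{def:pseudo}, \ref{def:pseudotypeII}, \ref{def:pseudotypeI}) attached to the surviving elliptic components along fibres and contracted by $f$. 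One checks the MMP terminates (surface MMP on components, together with termination over $R$ in the relevant singularity class) and that the output $X_0$ is the log canonical model, so $\omega_{X_0}(S_0+F_{\calA,0})$ is ample; slc-ness is preserved at each step by adjunction and inversion of adjunction, so $X_0$ is a stable pair. Uniqueness of the limit follows from uniqueness of the log canonical model, which completes the valuative criterion and hence the structural statement.

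The main obstacle is the second step: one must control exactly which curves are contracted and in what order, prove termination, and verify that the section together with the sprouted isotrivial components degenerates precisely into type I and type II pseudoelliptic chains and nothing more pathological. This requires a careful case analysis across all Kodaira fibre types (including non-reduced fibres and the $\mathrm{N}_k$ types) and across the gluing configurations allowed by the TSM conditions, and is where essentially all of the work resides.
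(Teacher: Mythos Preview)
The theorem is quoted from \cite[Theorem 1.6]{master} and is not proved in the present paper; the paper only indicates that the argument proceeds by an explicit stable reduction algorithm combining twisted stable maps (to obtain the weight-$1$ limit) with the log MMP (to reduce to weight $\calA$), as summarized around the TSM conditions in Section~3 and in Remark~\ref{rmk:flipping}. Your two-step outline follows exactly this strategy and correctly identifies that the substantive work lies in the MMP step and its case analysis, so your proposal is aligned with the cited approach.
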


\begin{figure}[!h]
\includegraphics{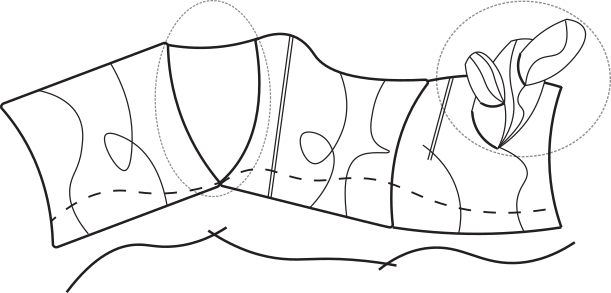}
\caption{An $\calA$-broken elliptic surface. Two types of pseudoelliptic surfaces circled. Left: Type II and Right: Type I.}\label{fig:pseudodef}
\end{figure}

To both put these results into context and define \emph{pseudoelliptic surfaces}, we must first discuss the different (singular) fiber types that appear in semi-log canonical models of elliptic fibrations as studied in \cite{calculations} (see also \cite[Section 3]{master}). 

\begin{definition}\label{def:fibertypes} Let $(g: Y \to C, S' + aF')$ be a Weierstrass elliptic surface pair over the spectrum of a DVR and let $(f: X \to C, S + F_a)$ be its relative log canonical model. We say that $X$ has a(n):  
\begin{enumerate} 
\item \textbf{twisted fiber} if the special fiber $f^*(s)$ is irreducible and $(X,S + E)$ has (semi-)log canonical singularities where $E = f^*(s)^{red}$;   
\item \textbf{intermediate fiber} if $f^*(s)$ is a nodal union of an arithmetic genus zero component $A$, and a possibly non-reduced arithmetic genus one component supported on a curve $E$ such that the section meets $A$ along the smooth locus of $f^*(s)$ and the pair $(X, S + A + E)$ has (semi-)log canonical singularities. 
\end{enumerate} 
\end{definition}

Given an elliptic surface $f: X \to C$ over the spectrum of a DVR such that $X$ has an \emph{intermediate fiber}, we obtain the \emph{Weierstrass model} (Definition \ref{def:weierstrassmodel}) of $X$ by contracting the component $E$, and we obtain the \emph{twisted model} by contracting the component $A$. As such, the intermediate fiber can be seen to interpolate between the Weierstrass and twisted models (see \ref{fig:transitions}). 

\begin{figure}[!h]
\includegraphics{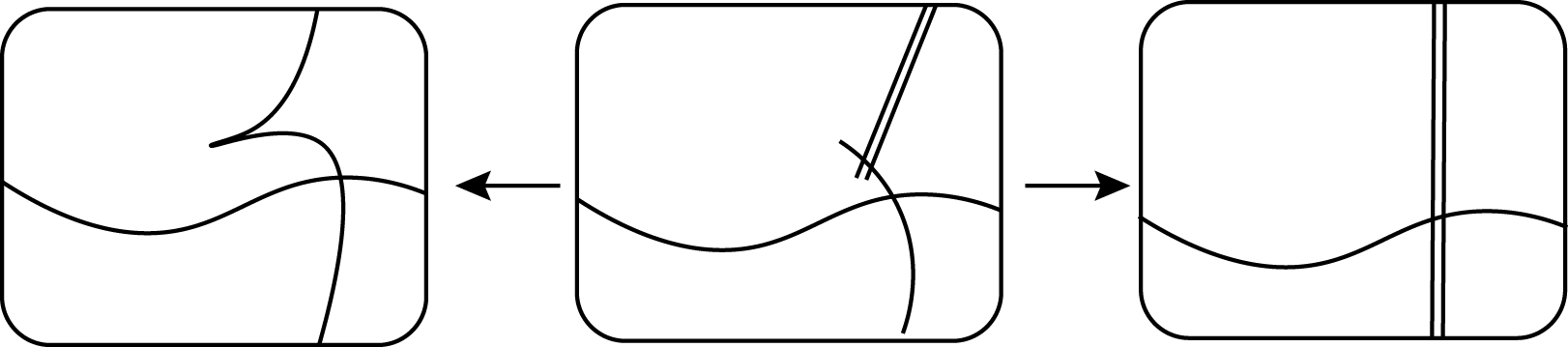}
\caption{Here we illustrate the relative log canonical models and morphisms between them. From left to right: \textbf{Weierstrass} model ($ 0 \leq a \leq a_0$) -- a single reduced and irreducible component meeting the section, \textbf{intermediate} model ($a_0 < a < 1$) -- a nodal union of a reduced component meeting the section and a nonreduced component, and \textbf{twisted} model ($a = 1$) -- a single nonreduced component meeting the section in a singular point of the surface.}\label{fig:transitions}
\end{figure}

This is made precise via the following theorem (see also \cite[Remark 3.20]{master}). 

\begin{theorem}\cite[Theorem 3.19]{master}\label{thm:transitions} Let $(g: Y \to C, S' + aF')$ be a Weierstrass model over the spectrum of a DVR, and let $(f: X \to C, S + F_a)$ be the relative log canonical model. Suppose the special fiber $F'$ of $g$ is either either 
\begin{enumerate*}[label = (\alph*)] \item one of the Kodaira singular fiber types, or \item $g$ is isotrivial with constant $j$-invariant $\infty$ and $F'$ is an $N_0$ or $N_1$ fiber (see Definition \ref{def:nk}). \end{enumerate*}

\begin{enumerate} 
\item If $F$ is a type $\mathrm{I}_n$ or $\mathrm{N}_0$ fiber, then the relative log canonical model is the Weierstrass model for all $0 \le a \le 1$.
\item For any other fiber type, there is an $a_0$ such that the relative log canonical model is
\begin{enumerate}[label = (\roman*)]
\item the \emph{Weierstrass} model for any $0 \le a \le a_0$,
\item a \emph{twisted fiber} consisting of a single non-reduced component supported on a smooth rational curve when $a = 1$, and
\item an \emph{intermediate fiber} with $E$ a smooth rational curve for any $a_0 < a < 1$.
\end{enumerate}
\end{enumerate}

The constant $a_0$ is as follows for the other fiber types:
$$
a_0 = \left\{ \begin{array}{lr} 5/6 & \mathrm{II} \\ 3/4 & \mathrm{III} \\ 2/3 & \mathrm{IV} \\  1/2 & \mathrm{N}_1 \end{array}\right. \\
\  a_0 = \left\{ \begin{array}{lr} 1/6 & \mathrm{II}^* \\ 
1/4  & \mathrm{III}^* \\
1/3 & \mathrm{IV}^* \\
1/2 & \mathrm{I}_n^* \end{array}\right. 
$$
\end{theorem}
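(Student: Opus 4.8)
The plan is to reduce everything to a local statement over the DVR and then compute the relative log canonical model by taking a log resolution $\phi\colon W\to Y$ of the Weierstrass pair $(Y,S'+F')$ and running the relative log minimal model program over $C$; the claim is that the output is the Weierstrass model when $a\le a_0$, the intermediate model when $a_0<a<1$, and the twisted model when $a=1$. Away from the special fiber over $s$ the fibers of $g$ are smooth or nodal cubics meeting $S'$ transversally at a smooth point, so there $(Y,S'+aF')$ is already a relatively ample log canonical pair and the relative log canonical model is an isomorphism; all of the content is over $s$. The geometric input that organizes the computation is that $S'$ meets $F'$ at a single point which is smooth on both $Y$ and $F'$ and disjoint from the ADE (or pinch) singularity of the Weierstrass surface, so that every $\phi$-exceptional divisor lies over the singular point of $F'$ and $S'$ is never modified.

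For $a\le a_0$ I would show that $Y$ itself is the relative log canonical model. Writing $\phi^*(K_Y+S'+aF')=K_W+\tilde S+a\tilde F'+\sum_i(ae_i-d_i)E_i$ with $d_i=a(E_i,Y)\ge 0$ the discrepancy of $K_Y$ along $E_i$ and $e_i=\mathrm{mult}_{E_i}\phi^*F'$, the pair $(Y,S'+aF')$ is log canonical exactly for $a\le a_0:=\min_i\tfrac{1+d_i}{e_i}=\mathrm{lct}(Y,F')$; and it is relatively ample because $\calO_Y(F')|_{F'}\cong\calO_{F'}$, so adjunction (valid since $Y$ is Gorenstein and $F'$ is Cartier) identifies $(K_Y+S'+aF')|_{F'}$ with $\omega_{F'}(S'\cap F')$, of degree $1$ on the irreducible curve $F'$. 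Uniqueness of log canonical models then gives $X\cong Y$. The value of $a_0$ is read off type by type from the resolution: for $\mathrm{I}_n$ and $\mathrm{N}_0$ one has $\mathrm{lct}(Y,F')=1$; for the additive Kodaira fibers the dual graph is simply laced, so the Du Val resolution is already a log resolution and $a_0=1/\max_i m_i$, i.e.\ $1/6,1/4,1/3,1/2$ for $\mathrm{II}^*,\mathrm{III}^*,\mathrm{IV}^*,\mathrm{I}_n^*$; and for $\mathrm{II},\mathrm{III},\mathrm{IV}$ a further blow-up is needed to make $\phi^*F'$ a simple normal crossing divisor (resolving the cusp of $F'$ when $Y$ is smooth, separating the strict transform of $F'$ from the exceptional $(-2)$-curves otherwise), the last exceptional divisor carrying $(d_i,e_i)=(4,6),(2,4),(1,3)$ and hence $a_0=5/6,3/4,2/3$; the case $\mathrm{N}_1$ is analogous after normalizing the pinch point, with the double locus passing through the blown-up point and forcing $a_0=1/2$.

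For $a_0<a\le1$ the Weierstrass pair is no longer log canonical, so I would build the intermediate and twisted models explicitly on $W$ and identify them with the MMP output. For $a_0<a<1$ one contracts all exceptional curves of $\phi$ except the strict transform $A$ of $F'$ and the curve $E$ that computed $a_0$, producing the \emph{intermediate fiber} $A\cup E$ of Definition \ref{def:fibertypes}; one then checks (a) that $X$ is normal with a cyclic quotient singularity at $A\cap E$, (b) that $K_X+S+F_a$ restricts to an ample divisor on $A\cup E$ — again by adjunction, of degree a positive linear function of $a$ on $(a_0,1)$ — and (c) that contracting $E$ recovers the Weierstrass model while contracting $A$ yields a single non-reduced component. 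The endpoint $a=1$ is the limit: the degree of $K_X+S+F_a$ on $A$ drops to $0$ exactly at $a=1$, forcing the contraction of $A$ and producing the \emph{twisted fiber}, a single non-reduced component supported on a smooth rational curve meeting the section at a cyclic quotient singularity; one verifies this pair is slc with $K+F_a$ relatively ample and again invokes uniqueness. The isotrivial $j=\infty$ cases $\mathrm{N}_0,\mathrm{N}_1$ (Definition \ref{def:nk}) run in parallel after passing to the semi-resolution, the only new point being the bookkeeping of the double locus and the pinch point.

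The step I expect to be the main obstacle is this last one: verifying that the surfaces one writes down really are slc — identifying the cyclic quotient singularities at $A\cap E$ and at the point where the section meets the twisted fiber, and checking the log canonical condition there — and checking relative ampleness of $K+F_a$ (not merely log canonicity) in every case, all while keeping the contractions relating the Weierstrass, intermediate, and twisted models mutually consistent across the Kodaira fiber types together with $\mathrm{N}_0$ and $\mathrm{N}_1$. By comparison, the determination of $a_0$ in the previous step is a mechanical log canonical threshold computation.
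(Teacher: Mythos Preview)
The paper does not prove this theorem: it is quoted verbatim from the authors' earlier work \cite[Theorem~3.19]{master}, with the underlying local computations deferred to \cite{calculations}. So there is no in-text argument to compare against. Your sketch is, however, exactly the approach taken in those references: reduce to a local problem over the DVR, compute $a_0$ as the log canonical threshold $\mathrm{lct}(Y,S',F')$ by pulling back to a log resolution, verify that the Weierstrass model is already the log canonical model for $a\le a_0$ by an adjunction/ampleness check, and for $a_0<a\le 1$ explicitly identify the intermediate and twisted models as partial contractions of the resolution and verify slc plus relative ampleness case by case. Your determination of $a_0$ --- reading off $1/\max_i m_i$ for the starred fibers from the Du Val resolution, and computing $(1+d_i)/e_i$ for the final blow-up in the $\mathrm{II},\mathrm{III},\mathrm{IV},\mathrm{N}_1$ cases --- is the same bookkeeping carried out in \cite[\S4--5]{calculations}.

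One small correction of emphasis: for types $\mathrm{III}$ and $\mathrm{IV}$ the minimal Weierstrass surface is \emph{not} smooth (it carries an $A_1$, respectively $A_2$, singularity at the non-section point of $F'$), so your description ``resolving the cusp of $F'$ when $Y$ is smooth'' applies only to type $\mathrm{II}$; for $\mathrm{III}$ and $\mathrm{IV}$ one first takes the Du Val resolution and then performs the additional blow-up you describe to separate the strict transform of $F'$ from the exceptional $(-2)$-curves. This does not affect your numbers, which are correct. Your self-identified ``main obstacle'' --- verifying slc and relative ampleness for the intermediate and twisted models in each case --- is indeed where the work lies, and is exactly what \cite{calculations} does via explicit case analysis; there is no shortcut known to the authors either.
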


\begin{remark} Note that if $F = E$ is a Weierstrass model of a type $\textrm{I}_n$ or $\textrm{N}_0$ fiber, then blowing up the point where $E$ meets the section produces an intermediate fiber $A \cup E'$ where $A$ is the exceptional divisor and $E'$ is the strict transform of $E$. While such intermediate fibers are never the log canonical models of pairs as above, they can appear in a stable degeneration where a \emph{pseudoelliptic surface} (see Definition \ref{def:pseudo}) is attached to $E$.  \end{remark} 

 We are now ready to recall the pseudoelliptic pairs that necessarily appear as components of surfaces in our moduli spaces, a phenomenon first noticed by La Nave \cite{ln}.

\begin{definition}\label{def:pseudo} A \textbf{pseudoelliptic pair} is a surface pair $(Z, F)$ obtained by contracting the section of an irreducible elliptic surface pair $(f: X \to C, S + F')$. We call $F$ the \textbf{marked pseudofibers} of $Z$. We call $(f: X \to C, S)$ the associated elliptic surface to $(Z, F)$. \end{definition}

These surfaces naturally appear as byproducts of the MMP. Indeed, the MMP will contract the section of an elliptic surface if it has non-positive intersection with the log canonical divisor of the surface. Furthermore, we note that there are two types of pseudoelliptic surfaces which appear on the boundary of our moduli spaces. We refer the reader to \cite[Definition 4.6, 4.7]{master} for the precise definitions of the two types of pseudoelliptic surfaces. We give abridged versions of the two definitions and refer to Figure \ref{fig:pseudodef} for brevity.

\begin{definition}\label{def:pseudotypeII} A pseudoelliptic surface of \textbf{Type II}  is formed by the log
canonical contraction of a section of an elliptic component attached along \emph{twisted} or \emph{stable} fibers. \end{definition}

\begin{definition}\label{def:pseudotypeI} A pseudoelliptic surface of \textbf{Type I} appear in \emph{pseudoelliptic trees} attached by gluing an
irreducible pseudofiber $G_0$ on the root component to an arithmetic genus one component $E$ of an
intermediate (pseudo)fiber of an elliptic or pseudoelliptic component. \end{definition}

See Remark \ref{rmk:flipping} for a further discussion on Type I pseudoelliptic surfaces. 

\begin{remark}\label{rmk:pseudofiberwt}  We recall the following from \cite[Definition 4.6]{master}. Let $(f: X' \to C, S + F_\calA)$ be an $\calA$-broken elliptic surface where $X' = X \cup_E Y$ with a marked Type I pseudoelliptic surface glued $(Y, (F_\calA)|_Y)$ glued to the arithmetic genus one component $E$ of an intermediate (pseudo)fiber $E \cup A$ with reduced component $A$ on $X$.  Then if $F_{\calA} = \sum a_i F_i$ we have that \begin{equation}\label{eq:coeff}\mathrm{Coeff}(A, F_\calA) = \sum_{\mathrm{Supp}(F_\calA|_Y)} \mathrm{Coeff}(F_i) = \sum_{\mathrm{Supp}(F_\calA|_Y)} a_i.\end{equation} is a sum of weights of marked fibers on $Y$. \end{remark}

To reiterate, Figure \ref{fig:pseudodef} has a Type II pseudoelliptic component circled on the left, and a tree of pseudoelliptic surfaces of Type I circled on the right. Furthermore, it turns out that contracting
the section of a component to form a pseudoelliptic corresponds to stabilizing the base curve
as an $\calA$-stable curve in the sense of Hassett (see \cite[Corollaries 6.7 \& 6.8]{calculations}). In particular we have the following theorem.

\begin{theorem}\cite[Theorem 1.4]{master}\label{thm:forgetful} There are forgetful morphisms $\calE_{v, \calA} \to \overline{\calM}_{g, \calA}$. \end{theorem}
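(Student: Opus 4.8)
I would construct the forgetful morphism at the level of families, show that the output is $\calA$-stable, and conclude by functoriality. Write $g$ for the arithmetic genus of the base curve (so $g=0$ in the rational case of interest). Given a family $(f\colon \calX \to \mathcal C,\ \mathcal S + \mathcal F_\calA) \to T$ of $\calA$-broken stable elliptic surfaces, forget the surface and keep the pair $(\mathcal C \to T,\ \sum_i a_i\Sigma_i)$, where $\Sigma_i = f(\mathcal F_i) \subset \mathcal C$ is the image of the $i$th marked fiber, carrying weight $a_i$. By Theorem \ref{thm:masterthm} each geometric fiber of $\mathcal C \to T$ is a connected nodal curve, each $\mathcal F_i$ lies over a single point of it, necessarily in the smooth locus (a marked fiber or marked pseudofiber of an $\calA$-broken surface always lies over a smooth point of the base; cf. Theorem \ref{thm:masterthm}), so each $\Sigma_i \to T$ is a section avoiding the nodes. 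Since $\mathcal C$ is part of the datum of the family, flatness and compatibility with base change are automatic, and we obtain a natural transformation from the moduli pseudofunctor of $\calE_{v,\calA}$ to that of $\calA$-weighted prestable pointed curves.

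\textbf{Checking $\calA$-stability.} The plan is to verify, fiber by fiber, that $(C,\sum a_i p_i)$ satisfies Hassett's conditions, so that the morphism factors through $\overline{\calM}_{g,\calA}$. When several $p_i$ collide at a point $p$, the sum of the corresponding weights is at most $1$: if $p$ lies under honest marked fibers $F_i$ ($i \in J$), this is the slc bound on the coefficient $\sum_{i\in J} a_i$ of the reduced Cartier divisor $f^{-1}(p)$ in the pair $(X, S + (\sum_{i\in J}a_i)f^{-1}(p) + \cdots)$, which reduces to the model $(\mathbb A^2, \{x=0\}+c\{y=0\})$; if $p$ is where a pseudoelliptic tree is attached, it is the slc bound on the coefficient of the reduced component of the corresponding intermediate fiber, which equals $\sum_{i\in J}a_i$ by Remark \ref{rmk:pseudofiberwt}. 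For ampleness of $\omega_C(\sum a_i p_i)$: on a component $C_0 \subset C$ underlying a genuine (non-pseudo) elliptic component $X_0 \subset X$, the section restricts to an isomorphism $S_0 \xrightarrow{\sim} C_0$, and semi-adjunction along $S$ expresses $(K_X + S + \mathcal F_\calA)|_{S_0}$ as $K_{C_0} + \sum a_i p_i$ plus the conductor (node) contributions and an effective different, so the ampleness of $K_X + S + \mathcal F_\calA$ (part of being a stable pair) forces the needed positivity on $C_0$. Finally, the components of the naive prestable base that are absent from $C$ are precisely those whose sections the log MMP contracts to form pseudoelliptic components, and by Corollaries 6.7--6.8 of \cite{calculations} this is exactly Hassett's $\calA$-stabilization of the weighted base; hence $C$ is the $\calA$-stable model and $\omega_C(\sum a_i p_i)$ is ample everywhere.

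\textbf{Conclusion.} Assembling the two steps, to each $T$ we have functorially associated a family of $\calA$-stable $n$-pointed genus-$g$ curves, i.e. a morphism $T \to \overline{\calM}_{g,\calA}$; this is a morphism of the underlying pseudofunctors, and since $\overline{\calM}_{g,\calA}$ is a smooth Deligne--Mumford stack it induces a morphism out of $\calE_{v,\calA}$ (the main, normalized component of the moduli stack of $\calA$-broken surfaces of Theorem \ref{thm:stack}), which is the asserted forgetful morphism.

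\textbf{Main obstacle.} The heart of the matter is the last assertion in the stability step: that the base curve of an $\calA$-broken surface is \emph{exactly} the $\calA$-stable model, neither over- nor under-stabilized. This is the precise matching between the log MMP contractions that create pseudoelliptic (sub)trees and Hassett's reduction morphism, together with the bookkeeping that the combined weight accumulating at an attachment point is $\sum_{i\in J}a_i$; it rests on the local computation of relative log canonical models near the section carried out in \cite{calculations, master}. A subsidiary technical point is the semi-adjunction calculation along sections that pass through the cyclic quotient singularities of twisted fibers (relevant in the all-coefficients-one chamber), where one must control the different to be sure that positivity descends to $C$.
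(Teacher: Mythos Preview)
The paper does not supply a proof of this theorem; it is quoted verbatim from \cite[Theorem 1.4]{master} as background. The only indication the present paper gives of the argument is the sentence immediately preceding the statement: ``contracting the section of a component to form a pseudoelliptic corresponds to stabilizing the base curve as an $\calA$-stable curve in the sense of Hassett (see \cite[Corollaries 6.7 \& 6.8]{calculations}).''

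Your sketch is consistent with that hint and with the structure of the argument in \cite{master}. You correctly locate the substantive content in the matching between the log MMP step that contracts a section (producing a pseudoelliptic component) and Hassett's stabilization of the weighted pointed base curve, and you cite the same input (\cite[Corollaries 6.7 \& 6.8]{calculations}) the paper points to. The bookkeeping via Remark \ref{rmk:pseudofiberwt} for the accumulated weight at an attachment point, and the slc bound forcing $\sum_{i\in J} a_i \le 1$ at a collision, are the right checks for the Hassett condition on marked points.

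One small caution on your semi-adjunction step: for components carrying twisted fibers the section does pass through cyclic quotient singularities, and in \cite{master} the positivity on the base is obtained more directly from the relative log canonical model description (the section is contracted exactly when the Hassett condition fails on that component) rather than from a different computation along $S$. Your ``main obstacle'' paragraph already flags this, so you are aware of where the delicate point lies; the cleanest route is to invoke the local log canonical model computations of \cite{calculations} directly, as the paper's one-line pointer suggests, rather than to redo adjunction by hand.
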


\begin{remark}\label{rmk:contraction} We recall that for an irreducible component with base curve $\bP^1$ and $\deg \mathscr{L} = 1$,  contracting the section of an elliptic component might \emph{not} be the final step in the minimal model program. In particular, we might need to contract the entire pseudoelliptic component to a curve or a point. This is the content of \cite[Proposition 7.4]{calculations}. \end{remark}

\subsection{Wall and chamber structure}

We now want to understand how the moduli spaces $\calE_{\calA}$ change as we vary the weight vector $\calA$. 

\smallskip

Let $\calD \subset (\bQ \cap [0,1])^n$ be the set of \emph{admissible weights},
i.e. weight vectors $\calA$ such that $K_X +S+F_\calA$ is pseudoeffective. A \emph{wall and chamber decomposition}
of $\calD$ is a finite collection $\calW$ of hypersurfaces (the walls), and the chambers are the connected
components of the complement of $\calW$ in $\calD$. There are three types of walls in our wall and chamber decomposition.

\begin{definition}\label{def:walltypes} The collection $\calW$ consists of the following types of walls:
\begin{enumerate}
\item[(I)] A wall of \textbf{Type $\mathrm{W}_{\mathrm{I}}$} is a wall arising from the log canonical transformations seen in
Theorem \ref{thm:transitions}, i.e. the walls where the fibers of the relative log canonical model
transition from twisted, to intermediate, to Weierstrass fibers.
\item[(II)] A wall of \textbf{Type $\mathrm{W}_{\mathrm{II}}$} is a wall at which the morphism induced by the log canonical contracts the section of some components.
\item[(III)] A wall of \textbf{Type $\mathrm{W}_{\mathrm{III}}$} is a wall where the morphism induced by the log canonical contracts an entire rational pseudoelliptic component (Remark \ref{rmk:contraction}). \end{enumerate} \end{definition}

Note that there are also \emph{boundary} walls given by $a_i = 0$ and $a_i = 1$ at the boundary of $\calD$, and these can be any of the three types above. 

\begin{remark}\label{rmk:hassettwall2} Recall that by the discussion proceeding Theorem \ref{thm:forgetful}, that the walls of Type $\mathrm{W}_{\mathrm{II}}$ are precisely the walls of Hassett's wall and chamber decomposition \cite{has}. \end{remark}

The following theorem classifies the three types of walls in general.

\begin{theorem}\cite[Theorem 6.3]{master}\label{thm:walls} The non-boundary walls of each type are described as follows: 
\begin{enumerate}[label = (\alph*)]
\item Type $\mathrm{W_{\mathrm{I}}}$ walls are defined by the equations
$$
a_i =\frac{1}{6}, \frac{1}{4}, \frac{1}{3}, \frac{1}{2}, \frac{2}{3}, \frac{3}{4}, \frac{5}{6}.
$$ 

\item Type $\mathrm{W_{\mathrm{II}}}$ walls are defined by equations
$$
\sum_{j = 1}^k a_{i_j} = 1.
$$
where $\{i_1, \ldots, i_k\} \subset \{1, \ldots, n\}$. When the base curve is rational there is another $\mathrm{W_{\mathrm{II}}}$ wall at
$$
\sum_{i = 1}^r a_i = 2.
$$
\item Type $\mathrm{W_{\mathrm{III}}}$ walls where a rational pseudoelliptic component contracts to a point are given by
$$
\sum_{j = 1}^k a_i = c
$$
where $\{i_1, \ldots, i_k\} \subset \{1, \ldots, n\}$ and $c = \frac{1}{6}, \frac{1}{4}, \frac{1}{3}, \frac{1}{2}, \frac{2}{3}, \frac{3}{4}, \frac{5}{6}$ are the log canonical thresholds of minimal Weierstrass fibers. \\

\item Finitely many Type $\mathrm{W_{\mathrm{III}}}$ walls where an isotrivial rational pseudoelliptic component contracts onto the $E$ component of a pseudoelliptic surface it is attached to. 

\end{enumerate}

\noindent In particular, there are only finitely many walls and chambers. 
\end{theorem}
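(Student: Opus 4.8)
The plan is to split the wall structure into the part governing the local behavior of the relative log canonical model over the base curve and the part governing the global birational geometry once that relative model is fixed. For the Type $\wi$ walls everything is local over a marked point $p_i$: whether the fiber of the relative log canonical model over $p_i$ is Weierstrass, intermediate, or twisted depends only on $a_i$ and the Kodaira type of the minimal Weierstrass fiber there, and Theorem~\ref{thm:transitions} already lists which types produce a transition (all except $\mathrm{I}_n$ and $\mathrm{N}_0$) together with the transition value $a_0$ in each case. Running over the relevant types $\mathrm{II}, \mathrm{III}, \mathrm{IV}, \mathrm{N}_1, \mathrm{II}^*, \mathrm{III}^*, \mathrm{IV}^*, \mathrm{I}_n^*$ collects exactly the seven values $1/6, 1/4, 1/3, 1/2, 2/3, 3/4, 5/6$; since the $i$-th marked fiber may have any type, the Type $\wi$ walls are precisely the hyperplanes $a_i = c$ for $c$ in this set, and for the ``no other walls'' direction I would note that away from these values the relative fiber type is locally constant, hence the relative model deforms in families.

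For the Type $\wii$ walls I would invoke the forgetful morphism $\calE_\calA \to \overline{\calM}_{g,\calA}$ of Theorem~\ref{thm:forgetful}: contracting the section of an elliptic component is the elliptic-surface incarnation of Hassett-stabilizing the base (Remark~\ref{rmk:hassettwall2}), so the interior Type $\wii$ walls pull back from Hassett's wall-and-chamber decomposition and are the hyperplanes $\sum_j a_{i_j} = 1$. The additional wall in the rational case I would get by a one-line intersection computation: on a rational elliptic surface $\deg\mathscr{L} = 1$ forces $K_X \sim f^*(K_{\bP^1} + \mathscr{L}) \equiv f^*\calO(-1)$ and $S^2 = -1$, whence $(K_X + S + F_\calA)\cdot S = \sum_i a_i - 2$, which changes sign at $\sum_i a_i = 2$, the weight at which the log canonical morphism contracts the section of the root rational component. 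For the Type $\wiii$ walls, once a section is contracted the MMP must retest the resulting pseudoelliptic component for $(K + F_\calA)$-positivity (Remark~\ref{rmk:contraction}, i.e. \cite[Prop.~7.4]{calculations}); I would compute the intersection of $K + F_\calA$ with the pseudofiber class of a rational pseudoelliptic tail glued along a fiber of type $T$ and show it vanishes exactly when the marked pseudofiber weight $\sum_j a_{i_j}$ equals the log canonical threshold $c$ of the minimal Weierstrass fiber of type $T$ --- over all types this is again $c \in \{1/6, 1/4, 1/3, 1/2, 2/3, 3/4, 5/6\}$ (the types $\mathrm{I}_n, \mathrm{N}_0$ give $c = 1$, already a $\wii$ wall), which is case (c). Case (d) is the degenerate alternative: if the pseudoelliptic is isotrivial --- forcing $j \equiv \infty$, so that only the $\mathrm{N}_k$ and starred Kodaira types occur --- it may instead contract onto the arithmetic genus one component $E$ of the intermediate fiber it is attached to, and I would bound the finitely many numerical possibilities to conclude there are only finitely many such walls.

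Finiteness then comes for free: each $\wi$ wall is one of seven hyperplanes $a_i = c$, each $\wii$ or $\wiii$ wall is indexed by a subset of $\{1,\dots,n\}$ together with one of finitely many constants, and there are finitely many isotrivial configurations, so $\calW$ is finite and the chambers are the finitely many connected components of $\calD \setminus \bigcup\calW$. The main obstacle is not the enumeration but the completeness claim --- that no further flips or divisorial contractions occur at intermediate weights, so that $\calE_\calA$ and its universal family are genuinely constant over each chamber. Proving this requires the precise output of both the relative log canonical model (Theorem~\ref{thm:transitions}) and the pseudoelliptic contraction (\cite[Prop.~7.4]{calculations}), and then checking that composing these with base stabilization introduces no new critical values; the subtle point is the interaction on a single component between the section contraction ($\wii$) and the subsequent contraction of the resulting pseudoelliptic ($\wiii$), where one must track how the fiber coefficient is transferred to the glued genus one component via Remark~\ref{rmk:pseudofiberwt}.
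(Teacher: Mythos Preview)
This paper does not contain a proof of Theorem~\ref{thm:walls}: the statement is imported verbatim from the companion paper \cite[Theorem~6.3]{master} and is only \emph{used} here, not reproved. So there is no ``paper's own proof'' to compare your attempt against.

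That said, your outline is broadly sound and in fact anticipates the arguments this paper gives for the special case of rational elliptic surfaces. Your treatment of $\wi$ is exactly the reading-off of Theorem~\ref{thm:transitions}; your $\wii$ computation $(K_X+S+F_\calA)\cdot S = \sum a_i - 2$ is correct and is precisely how the extra rational wall arises; and your $\wiii$ argument --- that a Type~I pseudoelliptic contracts to a point exactly when the total marked weight on it drops to the log canonical threshold of the Weierstrass cusp it is glued to --- is the content of Proposition~\ref{prop:typeIII} and Corollary~\ref{cor:typeIII}, which this paper proves directly (and which you would need to prove rather than cite). For case~(d), your sketch is vaguer: you assert one can ``bound the finitely many numerical possibilities'' for isotrivial contractions onto $E$, but actually carrying this out requires the explicit analysis of $j=\infty$ components in Theorem~\ref{thm:jinfwalls} and Example~\ref{ex:in}, where the contraction value depends on the $\mb{P}^1$-bundle structure of the pseudoelliptic and is \emph{not} a log canonical threshold. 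You correctly flag completeness (no further walls) as the delicate point; in \cite{master} this is handled by the general structure of the flips and contractions in \cite[Section~8]{master}, which your sketch gestures at but does not supply.
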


One of the first goals of this paper is to explicitly determine the walls of Type $\mathrm{W_{\mathrm{III}}}$ for $\calE_{\calA}$ in the case of rational elliptic surfaces. 

\smallskip

Finally, we state one of the main results from \cite{master}, which states how the moduli space changes as we vary $\calA$. 

\begin{theorem}\cite[Theorem 1.5]{master}\label{thm:main} Let $\calA, \calB \in \bQ^r$ be weight vectors with $0 < \calA \leq \calB \leq 1$. Then
\begin{enumerate}
\item If $\calA$ and $\calB$ are in the same chamber, then the moduli spaces and universal families are isomorphic.
\item If $\calA \le \calB$ then there are reduction morphisms $\calE_{v,\calB} \to \calE_{v,\calA}$ on moduli spaces which are compatible with the reduction morphisms on the Hassett spaces:
$$
\xymatrix{\calE_{v,\calB} \ar[r] \ar[d] & \calE_{v,\calA} \ar[d] \\ \overline{\calM}_{g,\calB} \ar[r] & \overline{\calM}_{g,\calA}}
$$

\item The universal families are related by a sequence of explicit divisorial contractions and flips $\calU_{v,\calB} \dashrightarrow \calU_{v,\calA}$ such that the following diagram commutes:
$$
\xymatrix{\calU_{v,\calB} \ar@{-->}[r] \ar[d] & \calU_{v,\calA} \ar[d] \\ \calE_{v,\calB} \ar[r] & \calE_{v,\calA}}
$$
More precisely, across $\wi$ and $\wiii$ walls there is a divisorial contraction of the universal family and across a $\wii$ wall the universal family undergoes a log flip. 
\end{enumerate}\end{theorem}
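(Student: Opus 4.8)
The plan is to establish the three parts in order; parts (1) and (2) are largely formal, while part (3) is the heart of the matter and requires a wall-by-wall local analysis. For part (1), the point is that the walls $\calW$ were defined in Definition \ref{def:walltypes} precisely as the loci where the relative log canonical model of a one-parameter Weierstrass degeneration changes behavior: $\wi$ walls are where fiber types transition among twisted, intermediate, and Weierstrass (Theorem \ref{thm:transitions}), $\wii$ walls are where the log canonical morphism begins to contract a section, and $\wiii$ walls are where it contracts an entire rational pseudoelliptic component (Remark \ref{rmk:contraction}). On the interior of a chamber all of these behaviors are locally constant, so for $\calA,\calB$ in the same chamber the relative log canonical model of a family with boundary coefficients $\calA$ is canonically identified with the one with coefficients $\calB$ -- the same underlying surface, section, and fiber configuration, with only the coefficients on $F$ altered -- and $K_X+S+F_\calA$ is ample exactly when $K_X+S+F_\calB$ is. Hence the two moduli pseudofunctors coincide and the universal families are identified; one checks along the way that ampleness of the log canonical divisor is an open condition that is constant on chambers, which holds because all of the relevant thresholds were built into the wall set.

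For part (2), given $\calA\le\calB$ we construct the reduction morphism by running the relative log MMP on the universal family. Starting from the universal $\calB$-stable pair $\pi\colon(\calX,\calS+\calF_\calB)\to\calE_{v,\calB}$, we lower the boundary coefficients from $\calB$ to $\calA$ and form the relative log canonical model $\Proj\bigoplus_m \pi_*\calO_\calX(\lfloor m(K_{\calX/\calE_{v,\calB}}+\calS+\calF_\calA)\rfloor)$. Finite generation holds fiberwise because each fiber is a surface pair whose Iitaka fibration is the (pseudo)elliptic fibration, hence admits a good minimal model, and the formation commutes with base change once one knows the fibers of the output are $\calA$-stable -- which follows by running the fiberwise MMP and invoking the classification of limiting fiber types and pseudoelliptic components in Theorems \ref{thm:transitions} and \ref{thm:masterthm}. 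This produces a family of $\calA$-stable pairs over $\calE_{v,\calB}$, and the universal property of $\calE_{v,\calA}$ yields the morphism $\calE_{v,\calB}\to\calE_{v,\calA}$. Compatibility with $\overline{\calM}_{g,\calB}\to\overline{\calM}_{g,\calA}$ is exactly the statement recalled after Theorem \ref{thm:forgetful} (from \cite{calculations}) that contracting sections and pseudoelliptic components of the surface Hassett-stabilizes the base curve.

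For part (3) it suffices to treat a single wall crossing -- $\calB$ and $\calA$ in adjacent chambers separated by one wall -- and then compose. Using the classification in Theorem \ref{thm:walls} we identify the induced birational modification $\calU_{v,\calB}\dashrightarrow\calU_{v,\calA}$. Across a $\wi$ wall at $a_i=a_0$ the arithmetic-genus-one component $E$ of the intermediate fiber, which sweeps out a divisor in $\calU_{v,\calB}$, is contracted to produce the Weierstrass fiber, giving a divisorial contraction $\calU_{v,\calB}\to\calU_{v,\calA}$; at the boundary wall $a_i=1$ one instead extracts the reduced component $A$, again a divisorial modification. Across a $\wiii$ wall an entire rational pseudoelliptic component of the fiber, again a divisor in $\calU_{v,\calB}$, contracts to a curve or a point (Remark \ref{rmk:contraction}) -- a divisorial contraction. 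Across a $\wii$ wall (these are precisely Hassett's walls, Remark \ref{rmk:hassettwall2}) the section of some component is contracted, and a local computation near that section -- which has negative self-intersection along the fibers -- shows that the correct modification of the universal family is a log flip rather than a divisorial contraction. One then verifies that the two squares in the statement commute by tracking these fiberwise operations through the universal families, using the functoriality from parts (1)--(2).

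The main obstacle is part (3), and within it two points are delicate. First, verifying in full generality that the $\wii$-wall modification is genuinely a flip requires a careful local model of the total space of the family near a section being contracted -- one must check that the relevant curves are flipping curves and that the flip exists and terminates. Second, one must carry out the bookkeeping that makes all of these fiberwise birational operations simultaneously functorial in families, so that the birational maps $\calU_{v,\calB}\dashrightarrow\calU_{v,\calA}$ and the morphisms $\calE_{v,\calB}\to\calE_{v,\calA}$ genuinely assemble into commuting diagrams; this is where flatness, base change, and the subtleties of the deformation theory of stable pairs (cf. the normalization caveat noted after Theorem \ref{thm:stack}) come in.
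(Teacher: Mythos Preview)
The paper does not prove this theorem; it is quoted verbatim from \cite[Theorem 1.5]{master}, and the only additional content here is Remark \ref{rmk:flipping}, which points to \cite[Section 8]{master} and to La Nave \cite[Section 4.3, Theorem 7.1.2]{ln} for the fact that the section contraction at a $\wii$ wall is a log flipping contraction in the total space of a one-parameter degeneration. So there is no ``paper's own proof'' to compare against.

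That said, your sketch is consistent with the strategy those references carry out, and you have correctly located the two genuine difficulties. A couple of remarks. First, your justification of the $\wii$ flip is slightly off in emphasis: the point is not simply that the section has negative self-intersection fiberwise, but rather that inside the threefold total space of a one-parameter degeneration the section of the contracting component is a small $K$-negative extremal ray, and La Nave's computation identifies the flip explicitly (the flipped curve is the reduced component $A$ of an intermediate fiber, which is why Type I pseudoelliptics end up attached along $A$; cf. Remark \ref{rmk:flipping}). Second, in part (2) your appeal to fiberwise finite generation and commutation with base change is exactly where the subtleties you flag at the end enter; the argument in \cite{master} handles this by working one wall at a time and checking directly that the output family is flat with $\calA$-stable fibers, rather than by invoking a general base-change statement for log canonical models. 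Your proposal is a fair high-level summary, but a complete proof requires the explicit local models and flatness checks carried out in \cite{master}.
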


\begin{remark}\label{rmk:flipping} For more on Theorem \ref{thm:main} (3), we refer the reader to \cite[Section 8]{master}. La Nave (see \cite[Section 4.3, Theorem 7.1.2]{ln}) noticed that the contraction of the section of a component is a log flipping contraction inside the total space of a one parameter degeneration.  In particular, the Type I pseudoelliptic surfaces are thus attached along the reduced component of an intermediate (pseudo)fiber (see \cite[Figure 13]{master}). \end{remark}

\subsection{The moduli spaces of interest} 

From now on we restrict to the case of rational elliptic surfaces. In particular, the base curve is of genus zero and the degree of $\mathscr{L}$ is one so that $C \cong \mb{P}^1$ and $\mathscr{L} = \calO_{\mathbb{P}^1}(1)$. Let $\mathcal{E}_{1,\calA}$ denote the KSBA compactification of the stack of rational elliptic surfaces with $12$ marked fibers weighted by $\calA = (a_1, \ldots, a_{12})$. Note here that the choice of $12$ marked divisors is part of the data. Thus generically $\mathcal{E}_{1,\calA}$ is fibered over the stack of rational elliptic surfaces with section and the fibers are open subvarieties of $\mb{P}(|-K_X|)^{12}$. 

First we pick out a slice of this projection. 

\begin{definition}\label{def:es} Let $\mathcal{E}^s_{1,\calA}$ be the closure in $\mathcal{E}_{1,\calA}$ of the locus of pairs $(f : X \to C, S +F_\calA)$ where $X$ is a rational elliptic surface and $\mathrm{Supp}(F_\calA)$ consists of $12$ $\mathrm{I}_1$ singular fibers. \end{definition}

Equivalently, $\mathcal{E}^s_{1,\calA}$ is the closure of the locus of rational elliptic surface pairs with smooth log canonical model and with divisor given by the discriminant of the fibration. Note that when $\calA = (a,\ldots, a)$ is a constant weight vector, then $S_{12}$ acts on $\mathcal{E}^s_{1,\calA}$ by permuting the marked fibers.

\begin{definition}\label{def:Ra} For $\calA = (a, \ldots, a)$ the constant weight vector, we define
$
\mathcal{R}(a) := \mathcal{E}^s_{a,\ldots,a}/S_{12}.$ \end{definition}

The following is clear from the analagous statement for $\calE_{1,\calA}$: 

\begin{prop} $\calR(a)$ is a proper Deligne-Mumford stack with coarse moduli space $R(a)$. \end{prop}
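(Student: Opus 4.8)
The plan is to deduce the proposition formally from Theorem \ref{thm:stack} together with standard facts about closed substacks and finite quotients of Deligne-Mumford stacks.

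First I would record that the slice $\calE^s_{1,\calA}$ is a proper Deligne-Mumford stack. By Theorem \ref{thm:stack}, specialized to the rational case $C \cong \bP^1$, $\deg \mathscr{L} = 1$, and $n = 12$ marked fibers, the relevant main component $\calE_{1,\calA}$ is a proper Deligne-Mumford stack of finite type. By Definition \ref{def:es}, $\calE^s_{1,\calA}$ is the closure inside $\calE_{1,\calA}$ of a locally closed locus, hence a closed substack; and a closed substack of a proper Deligne-Mumford stack of finite type is again one. So $\calE^s_{1,\calA}$ is proper Deligne-Mumford.

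Next I would check that $S_{12}$ acts on $\calE^s_{1,\calA}$ when $\calA = (a,\dots,a)$. The symmetric group acts on $\calE_{1,\calA}$ by permuting the labels of the $12$ marked fibers, and for a constant weight vector this is an action by automorphisms of the moduli problem. The locus defining $\calE^s_{1,\calA}$ --- pairs whose marked divisor is a sum of twelve distinct $\mathrm{I}_1$ fibers --- is invariant under relabeling, so its closure $\calE^s_{1,\calA}$ is $S_{12}$-invariant, and the action restricts. One then forms the quotient stack $\calR(a) = [\calE^s_{1,\calA}/S_{12}]$. A quotient of a Deligne-Mumford stack of finite type by a finite group is again Deligne-Mumford of finite type (the diagonal stays quasi-finite, separated and unramified), and the quotient morphism $\calE^s_{1,\calA} \to \calR(a)$ is finite and surjective; since the source is proper, so is $\calR(a)$.

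Finally, for the coarse space: $\calE^s_{1,\calA}$ has a coarse moduli space $R^s_{1,\calA}$, a proper algebraic space (Keel-Mori), and the $S_{12}$-action descends to it. The coarse space of $[\calE^s_{1,\calA}/S_{12}]$ is then the quotient $R(a) := R^s_{1,\calA}/S_{12}$, which is proper; this uses either a direct verification of the universal property or the general principle that taking coarse spaces commutes with quotients by finite groups. I do not expect a serious obstacle here: all of the genuine content --- the construction, properness, and deformation theory of $\calE_{v,\calA}$ --- is already in \cite{master}, and the only thing to verify, the $S_{12}$-invariance of the slice, is immediate from the fact that $S_{12}$ merely permutes labels carrying the common weight $a$.
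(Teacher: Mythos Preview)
Your argument is correct and is exactly the formal justification the paper has in mind: the paper gives no proof at all, merely stating that the result is ``clear from the analogous statement for $\calE_{1,\calA}$'', which is precisely the reduction you carry out via closed substacks and finite group quotients.
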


The stack $\mathcal{R}(a)$ and its coarse moduli space are the main subjects of this paper but $\mathcal{E}^s_{1,\calA}$ and $\mathcal{E}_{1,\calA}$ will be used in the sequel to understand the wall and chamber structure and birational transformations of $\mathcal{R}(a)$.

\begin{remark}\label{rem:disc} Since $\calE^s_{1,\calA}$ is defined as the closure of the locus of rational elliptic surface pairs with the $12$ $\mathrm{I}_1$ fibers marked, then the marked fibers always appear at the discriminant $\mathscr{D}$ of $f : X \to C$ over the smooth locus $C^{sm}$. In particular, the map $\calE^s_{1,\calA} \to \overline{\calM}_{0,\calA}$ sends $(f : X \to C, S + F_\calA)$ to $C$ marked by the $\calA$-weighted discriminant $\mathscr{D}$ of $f$. 

\end{remark}

\section{Birational contractions of the moduli space across walls}\label{sec:walls}

We saw above (see Theorem \ref{thm:walls}) that we have a complete description of the location of walls of Types $\wi$ and $\wii$. The goal of this section is to describe the walls of type $\wiii$ for the moduli spaces $\calE_{\calA}$ and then use this to study the explicit birational contractions $\mathcal{R}(a)$ undergoes as one reduces $a$. 

\subsection{Pseudoelliptic contractions of $\calE_\calA$}

First, recall that walls of Type $\wiii$ (see Definition \ref{def:walltypes}) correspond to the contraction of an entire pseudoelliptic component. From Definition \ref{def:pseudotypeI}, we noted that pseudoelliptic components of Type I are connected to the arithmetic genus one component $E$ of an intermediate (pseudo)fiber of another component. 

Let $(f : X \cup Z \to C, S + F_\calA)$ be an $\calA$-broken elliptic surface with pseudoelliptic component $Z$ attached to the arithmetic genus one component $E$ of an intermediate (pseudo)fiber $A \cup E$ on $X$. Suppose further that $Z$ is rational, otherwise $Z$ never contracts with nonzero coefficients (see \cite[Corollary 6.10]{calculations}). Then the contraction of $Z$ to a point produces a minimal Weierstrass fiber at $A \cup E$. Furthermore, $Z$ contracts if and only if $E$ is contracted in the log canonical model of $(X, (S + F_\calA)|_X)$ (see \cite[Proposition 7.4]{calculations}). 

\begin{definition} Let $(X,D)$ be be a pair with (semi-)log canonical singularities and $A \subset X$ a divisor. The \textbf{(semi-)log canonical threshold $\mathrm{lct}(X,D,A)$} is 
$$
\mathrm{lct}(X,D,A) := \max\{a \ : \ (X, D + aA) \text{ has (semi-)log canonical singularities }\}.
$$
\end{definition}

 Let $(f : X \cup Z \to C, S + F_\calA)$ be as above and let $p: X \to X'$ be the contraction of the $A \cup E$ intermediate fiber onto its Weierstrass (pseudo)fiber $A' \subset X'$. Let $D' = f_*D \subset X'$ where
 $$
 D = (S + F_\calA)|_X - \mathrm{Coeff}(F_\calA,A)A
 $$
 the boundary divisor on $X$ excluding the component $A$. 

\begin{prop}\label{prop:typeIII} The component $Z$ contracts to a point in the log canonical model of\\ $(f : X \cup Z \to C, S + F_\calA)$ if and only if
$$
\sum_{\mathrm{Supp}(F_\calA|_Z)} a_i \le \mathrm{lct}(X',D', A')$$
where the left hand side is a sum over marked pseudofibers on $Z$. 
\end{prop}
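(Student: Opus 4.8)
The plan is to reduce the claim to a local computation near the intermediate (pseudo)fiber $A \cup E$, using the two facts already established in the excerpt: first, that the pseudoelliptic component $Z$ contracts to a point if and only if $E$ is contracted in the log canonical model of $\bigl(X, (S+F_\calA)|_X\bigr)$ (this is \cite[Proposition 7.4]{calculations}, quoted above), and second, that contracting $Z$ produces a minimal Weierstrass fiber supported on $A \cup E$. So the question becomes: \emph{when is $E$ contracted by the relative log canonical model of the pair on $X$?} This is governed by the sign of the intersection of $K_X + (S+F_\calA)|_X$ with the curve $E$, so the first step is to write $E$ down as a curve in the intermediate fiber and compute $\bigl(K_X + (S+F_\calA)|_X\bigr)\cdot E$ in terms of $\mathrm{Coeff}(F_\calA, A)$, which by Remark \ref{rmk:pseudofiberwt} equals $\sum_{\mathrm{Supp}(F_\calA|_Z)} a_i$ — this is the left-hand side of the inequality.

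Next I would relate this intersection number to a log canonical threshold on the Weierstrass model $X'$. The key point is that the contraction $p : X \to X'$ of the intermediate fiber onto its Weierstrass fiber $A'$ is a birational morphism, and by the projection formula and the standard discrepancy bookkeeping for such a contraction (cf.\ the transition pictures of Figure \ref{fig:transitions} and Theorem \ref{thm:transitions}), the condition "$E$ is contracted on $X$" translates into a numerical condition on $X'$ of exactly the form "$(X', D' + cA')$ is (semi-)log canonical," where $c = \sum_{\mathrm{Supp}(F_\calA|_Z)} a_i$ is the coefficient that was carried by the glued pseudoelliptic via \eqref{eq:coeff}, and $D'$ is the pushforward boundary excluding $A$. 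Concretely: on $X'$ the fiber $A'$ is a minimal Weierstrass fiber, and the discrepancy along the exceptional locus of a log resolution of $(X', D' + cA')$ over the point $A' \cap S$ is a monotone (affine) function of $c$; the threshold value of $c$ past which some discrepancy exceeds $1$ is precisely $\mathrm{lct}(X', D', A')$. Matching this threshold with the sign change of the discrepancy of $E$ on the $X$-side gives the equivalence. One should treat the normal-crossing/pinch-point locus carefully so that "semi-log canonical" is the right notion when $X$ is reducible, but since $E$ and $A$ lie in the interior of a component this reduces to the normal (log canonical) computation on that component.

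The main obstacle I expect is the bookkeeping of discrepancies across the intermediate-to-Weierstrass contraction $p$: one must verify that the exceptional curve $E$ being contracted on $X$ corresponds \emph{exactly} (not just up to a harmless shift) to the lct bound on $X'$, which requires knowing the self-intersections and the coefficient of $A$ in $p^*(K_{X'} + D' + cA')$, i.e.\ computing the discrepancy $a_E$ in $K_X + (S+F_\calA)|_X \equiv p^*(K_{X'}+D'+cA') + a_E E$ and checking $a_E \le 1 \iff$ "$E$ not contracted" versus the lct inequality. This is essentially the same local analysis that underlies Theorem \ref{thm:transitions} (where the $a_0$ values $\tfrac16, \tfrac14, \tfrac13, \tfrac12, \ldots$ are exactly these lct's of minimal Weierstrass fibers), so I would lean on those computations: the proof should amount to observing that the wall-crossing value for contracting $Z$ is the lct of the Weierstrass fiber $A'$ \emph{relative to the extra boundary $D'$ already present}, and that the weight governing whether we have crossed it is the accumulated pseudofiber coefficient $\sum_{\mathrm{Supp}(F_\calA|_Z)} a_i$ transferred to $A$ by the gluing.
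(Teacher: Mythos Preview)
Your proposal is correct and follows essentially the same route as the paper: reduce to whether $E$ is contracted in the log canonical model of $\bigl(X,(S+F_\calA)|_X\bigr)$ via \cite[Proposition 7.4]{calculations}, identify $\mathrm{Coeff}(A,F_\calA)$ with $\sum_{\mathrm{Supp}(F_\calA|_Z)} a_i$ via Equation~\eqref{eq:coeff}, and then analyze the contraction $p:X\to X'$ locally.

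The one difference is in how the local step is executed. You propose to compute the discrepancy of $E$ (equivalently the intersection $(K_X+\Delta)\cdot E$) explicitly and match the sign change to the lct. The paper bypasses this computation with a cleaner two-line argument: if $c \le \mathrm{lct}(X',D',A')$ then $(X',D'+cA')$ is log canonical by definition, so $X'$ \emph{is} the relative log canonical model of $(X,S+cA+E)$ and $E$ is contracted; conversely, if $c>\mathrm{lct}(X',D',A')$ then contracting $E$ would yield a non-lc pair, while $(X,S+cA+E)$ itself remains lc because $\mathrm{lct}(X,S+E,A)=1$ (a fact from \cite{calculations}), so $E$ survives. This last input---that the lct of $A$ on the intermediate model is $1$---is the one ingredient you did not isolate, and it is what replaces your proposed explicit discrepancy bookkeeping. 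Your approach would certainly work once the numbers are written down, but the paper's formulation avoids it entirely. (Incidentally, in your displayed equivalence ``$a_E\le 1\iff E$ not contracted'' the threshold and direction are off: with your convention $K_X+\Delta_X=p^*(K_{X'}+\Delta_{X'})+a_E E$ and $E^2<0$, the correct statement is $a_E\ge 0 \iff (K_X+\Delta_X)\cdot E\le 0 \iff E$ \emph{is} contracted $\iff (X',\Delta_{X'})$ is lc. This is exactly the bookkeeping hazard you anticipated.)
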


\begin{cor}\label{cor:typeIII} There are type $\wiii$ walls for $\calE_\calA$ corresponding to pseudoelliptic components contracting to a point given by
$$
\sum_{i \in I} a_i = c
$$
where $I \subset \{1, \ldots, n\}$ and $c$ is the log canonical threshold of a minimal Weierstrass cusp. 
\end{cor}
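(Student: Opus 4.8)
The plan is to deduce the corollary from Proposition~\ref{prop:typeIII} once the threshold $\mathrm{lct}(X',D',A')$ appearing there has been identified. Fix an $\calA$-broken elliptic surface $(f : X \cup Z \to C, S + F_\calA)$ with $Z$ a rational pseudoelliptic component glued along the arithmetic genus one component $E$ of an intermediate (pseudo)fiber $A \cup E$ on $X$, in the notation of Proposition~\ref{prop:typeIII}, and set $I = \{\, i : F_i \subset \mathrm{Supp}(F_\calA|_Z) \,\} \subset \{1,\dots,n\}$. By Remark~\ref{rmk:pseudofiberwt} the reduced component $A$ appears in $F_\calA$ with coefficient $\mathrm{Coeff}(F_\calA,A) = \sum_{i\in I} a_i$, necessarily $\le 1$ as $A$ is a boundary component of an slc pair, and Proposition~\ref{prop:typeIII} states that $Z$ contracts to a point in the stable model exactly when $\sum_{i\in I} a_i \le \mathrm{lct}(X',D',A')$. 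Since this is an affine half-space condition on the weight domain $\calD$, the combinatorial type of the stable limit changes precisely along the hyperplane $\sum_{i\in I} a_i = \mathrm{lct}(X',D',A')$, which by Definition~\ref{def:walltypes} is a wall of Type $\wiii$ provided such a broken surface actually occurs. So it remains to compute $\mathrm{lct}(X',D',A')$ and to produce realizing degenerations.

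For the computation, I would argue that $\mathrm{lct}(X',D',A')$ is determined locally at the singular (``cuspidal'') point of the minimal Weierstrass (pseudo)fiber $A'$. Because $A \cup E$ is an intermediate and not a Weierstrass fiber, Theorem~\ref{thm:transitions}(1) excludes the types $\mathrm{I}_n$ and $\mathrm{N}_0$, so $A'$ is a singular minimal Weierstrass fiber of one of the types in Theorem~\ref{thm:transitions}(2), each with a single non-nodal point at which $X'$ may itself be singular. Away from that point $A'$ lies in the smooth locus of $X'$, and there $D'$ meets $A'$ only transversally at smooth points: the image of the section meets $A'$ off the cusp since on $X$ the section meets $A$ in the smooth locus of $A \cup E$, hence away from the node $A \cap E$ that $p$ crushes to the cusp (Definition~\ref{def:fibertypes}), while the other marked fibers surviving in $D' = f_*\!\left((S+F_\calA)|_X - \mathrm{Coeff}(F_\calA,A)\,A\right)$ lie over points of $C$ other than the one supporting $A'$. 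As $\mathrm{Coeff}(F_\calA,A) \le 1$, the pair $(X', D' + aA')$ is log canonical away from the cusp for all $a \le 1$, so $\mathrm{lct}(X',D',A')$ equals the log canonical threshold of $A'$ at its cusp --- which is exactly the constant $a_0$ attached to the fiber type of $A'$ in Theorem~\ref{thm:transitions}, i.e.\ an element of $\{1/6, 1/4, 1/3, 1/2, 2/3, 3/4, 5/6\}$, the list of log canonical thresholds of singular minimal Weierstrass fibers.

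For realizability, by Theorem~\ref{thm:masterthm} the boundary of $\calE_\calA$ parametrizes $\calA$-broken surfaces carrying Type I pseudoelliptic components glued to intermediate (pseudo)fibers, and for the subsets $I$ appearing in the statement one can arrange a one-parameter degeneration whose stable limit carries a rational pseudoelliptic bubble $Z$ over a minimal Weierstrass fiber of the prescribed cuspidal Kodaira type, with $\mathrm{Supp}(F_\calA|_Z)$ indexed by $I$. Then $\mathrm{Coeff}(F_\calA,A) = \sum_{i\in I} a_i$ by Remark~\ref{rmk:pseudofiberwt}, the criterion of Proposition~\ref{prop:typeIII} toggles as $\calA$ crosses $\sum_{i\in I} a_i = c$, and Theorem~\ref{thm:main}(3) then produces an honest divisorial contraction of the universal family across this hyperplane, making it a genuine Type $\wiii$ wall; finiteness is immediate since there are finitely many subsets $I$ and finitely many values $c$. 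I expect the one genuinely delicate step to be the reduction of $\mathrm{lct}(X',D',A')$ to a local computation at the Weierstrass cusp, which hinges on the precise incidence in Definition~\ref{def:fibertypes} (ensuring $D'$ is disjoint from the cusp after contracting $E$) together with the bound $\mathrm{Coeff}(F_\calA,A) \le 1$; once that is in place, pinning down $c$ is just the Kodaira classification via Theorem~\ref{thm:transitions}.
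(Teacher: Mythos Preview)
Your proposal is correct and follows essentially the same route as the paper: the corollary is deduced directly from Proposition~\ref{prop:typeIII} by identifying $\mathrm{lct}(X',D',A')$ with the log canonical threshold of the minimal Weierstrass cusp, which the paper records (in the remark immediately following the corollary) as the constant $a_0$ of Theorem~\ref{thm:transitions}. Your write-up is in fact more detailed than the paper's, which treats the corollary as an immediate restatement of the proposition together with Theorem~\ref{thm:walls}(c); in particular, your explicit justification that $D'$ is disjoint from the cusp (so the threshold is computed locally there) and your discussion of realizability spell out points the paper leaves implicit or defers to \cite{master}.
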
 

\begin{remark} In the case of type $\mathrm{II}$, $\mathrm{III}$, $\mathrm{IV}$ and $\mathrm{N}_1$ Weierstrass cusps, the log canonical threshold $c$ is given by the numbers $a_0$ in Theorem \ref{thm:transitions}. 
\end{remark} 

\begin{remark} Note there are also type $\wiii$ contractions of pseudoelliptics at the boundary walls given by $a_i = 0$. \end{remark} 

\begin{proof} The component $Z$ contracts to a point if and only if the curve $E$ it is attached to contracts to a point in the log canonical model of $(X, (S + F_\calA)|_X)$. Note first that $\mathrm{Coeff}(E, (S + F_\calA)|_X) = 1$ since $E$ is in the double locus of $X \cup Z$ and
$$
\sum_{\mathrm{Supp}(F_\calA|_Z)} a_i = \mathrm{Coeff}(A, (S + F_\calA)|_X)
$$
by Equation \ref{eq:coeff}. 

We need to compute at which coefficient of $A$ the component $E$ is contracted in the log canonical model of $(X, (S + F_\calA)_X)$. Since this is a local question, we may assume $X$ is an elliptic surface with section $S$, intermediate fiber $A \cup E$ and Weierstrass model $p : X \to X'$ with Weierstrass cusp $p_*(A \cup E) = A'$. 

Suppose that $a \le \mathrm{lct}(X', S', A')$ where $S' = p_*S$. Consider the log resolution $p : X \to X'$ of the pair $(X', S' + aA')$. Then by definition of log canonical singularities, the log canonical model of $(X, S + aA + E)$ relative to $p$ is $X'$ since $E = \mathrm{Exc}(p)$. Conversely, it is easy to compute from the singularity of $X$ at $A \cap E$ that $\mathrm{lct}(X, S + E, A) = 1$ (see the computations in \cite{calculations}). If $1 > a > \mathrm{lct}(X', S', A')$, the pair $(X, S + aA + E)$ is log canonical while the contraction of $E$ produces pair that has worse than log canonical singularities and so $E$ cannot be contracted in the lc model.  \end{proof}

\subsection{The birational contractions of $\calR(a)$}

Now we use the above discussion to determine the walls of $\calR(a)$ as one decreases $a$ and what birational contractions the moduli space undergoes. 

\begin{lemma}\label{lem:typeIRa} There are Type $\wii$ walls where Type $\mathrm{I}$ pseudoelliptic surfaces of $\calR(a)$ form at $a = 1/k$ for $k = 1, \ldots, 5$. 
\end{lemma}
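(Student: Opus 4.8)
The plan is to locate the Type $\wii$ walls of $\calR(a)$ by translating the general description in Theorem \ref{thm:walls}(b) into the rational elliptic surface setting, where the marked fibers are the twelve $\mathrm{I}_1$ fibers and all weights are equal to $a$. Recall that by Remark \ref{rmk:hassettwall2} the Type $\wii$ walls are exactly Hassett's walls for the associated weighted pointed base curve, which by Remark \ref{rem:disc} is $(\bP^1, a\mathscr{D})$ where $\mathscr{D}$ is the degree $12$ discriminant divisor. Since all twelve coefficients equal $a$, the general wall equation $\sum_{j=1}^k a_{i_j} = 1$ becomes $ka = 1$, i.e. $a = 1/k$, and the extra wall for rational base curve $\sum a_i = 2$ becomes $2/a = 12$ wait --- $\sum_{i=1}^{12} a = 12a = 2$, i.e. $a = 1/6$; this last one is where the section of the root component contracts rather than where a Type $\mathrm{I}$ pseudoelliptic forms, so it is not among the walls we want here.

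The key point to make precise is \emph{which} of the walls $a = 1/k$ produce Type $\mathrm{I}$ pseudoelliptic surfaces (as opposed to Type $\mathrm{II}$ or the global section contraction). First I would recall from Theorem \ref{thm:masterthm} and Definitions \ref{def:pseudotypeII}, \ref{def:pseudotypeI} that a Type $\mathrm{I}$ pseudoelliptic surface arises precisely when the base curve $(\bP^1, a\mathscr{D})$ undergoes an Hassett stabilization that collides a cluster of marked points whose total weight crosses $1$; the corresponding component of the stable pair is then a $\bP^1$ bubble carrying those markings, and its elliptic incarnation is a rational elliptic surface whose section gets contracted along an intermediate fiber attaching it as a Type $\mathrm{I}$ pseudoelliptic to the root. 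Since the $\mathrm{I}_1$ fibers all have coefficient $a$, the smallest cluster whose weight can reach $1$ has $k$ members with $ka = 1$, so the wall is at $a = 1/k$; and a bubble component of a rational elliptic surface pair that carries $k \le 12$ of the $\mathrm{I}_1$ fibers has at most $12$ singular fibers, so it is genuinely a rational elliptic surface with $\deg\mathscr{L}=1$, and thus the resulting pseudoelliptic is of Type $\mathrm{I}$ exactly as in Definition \ref{def:pseudotypeI}.

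The remaining thing to pin down is the range $k = 1, \ldots, 5$. I would argue as follows: a new bubble first appears at $a = 1/k$ when a cluster of $k$ of the twelve points collides; for this to be a genuine (non-boundary, non-global) Hassett wall in the interior $1/12 < a \le 1$, we need $2 \le k$, and for the complement cluster (the remaining $12 - k$ points together with the node) to still be stable on the root we need $12 - k \ge$ the appropriate count, i.e. $(12-k)a + 1 > 1$ must fail to trigger contraction of the \emph{root} section --- the root section contracts exactly at $12a = 2$, i.e. $a = 1/6$, so for $a = 1/k > 1/6$ (that is $k < 6$, hence $k \le 5$) the root is still an honest elliptic surface with uncontracted section and the new bubble attaches to it as a Type $\mathrm{I}$ pseudoelliptic; for $k \ge 6$ the wall $a = 1/k \le 1/6$ lies below or at the global contraction wall and the phenomenon is no longer ``Type $\mathrm{I}$ pseudoelliptic forms'' in the sense of the statement. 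I would also note $k = 1$ is included because $a = 1$ is a boundary wall of $\calD$ where the analogous transition happens at the boundary. Assembling these: the Type $\wii$ walls at which Type $\mathrm{I}$ pseudoelliptics form are $a = 1/k$ for $k = 1, \ldots, 5$.

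The main obstacle I anticipate is not the wall equation $ka = 1$ --- that is immediate from Theorem \ref{thm:walls}(b) and Remark \ref{rmk:hassettwall2} --- but rather cleanly justifying that the component sprouting off at such a wall is really a rational elliptic surface giving a \emph{Type $\mathrm{I}$} (not Type $\mathrm{II}$) pseudoelliptic, and disentangling this from the global $12a = 2$ wall so as to get precisely the range $1 \le k \le 5$. This requires carefully invoking the stable reduction/MMP description from \cite{master} (Theorem \ref{thm:main}(3) and Remark \ref{rmk:flipping}): the contraction of the bubble's section is a flipping contraction in the total space of a one-parameter degeneration, attaching the bubble along the reduced component $A$ of an intermediate fiber $A \cup E$, which is exactly the defining configuration of a Type $\mathrm{I}$ pseudoelliptic in Definition \ref{def:pseudotypeI}. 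I would make sure to cite \cite[Section 8]{master} and the discussion around Figure \ref{fig:pseudodef} for this identification.
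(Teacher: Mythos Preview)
Your proposal is correct and takes essentially the same approach as the paper: the Type $\wii$ walls come from the Hassett wall equation $ka = 1$ with equal weights (Theorem \ref{thm:walls}(b) and Remark \ref{rmk:hassettwall2}), and the range $k \le 5$ is forced by requiring the total weight $12a > 2$ so that the Hassett space of the base curve is still nontrivial. The paper's proof is a terse two sentences and does not spell out the Type $\mathrm{I}$ versus Type $\mathrm{II}$ distinction you worry about, so your extra justification via Remark \ref{rmk:flipping} is more than the paper actually provides.
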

\begin{proof} The flips forming Type I pseudoelliptic curves form when a component of the underlying weighted curve is contracted. Since all weights are the same, this occurs when $ka = 1$ as long as the total weight $12a > 2$ so that the moduli space of weighted stable curves is nontrivial. 
\end{proof} 

\begin{lemma}\label{lem:connection} Let $a > 1/6$. Then any pseudoelliptic component on a surface parametrized by $\calR(a)$ must be a Type $\mathrm{I}$ pseudoelliptic glued along a type $\mathrm{II}, \mathrm{III}, \mathrm{IV},$ or $\mathrm{N}_1$. \end{lemma}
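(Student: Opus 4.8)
The plan is to combine the classification of broken elliptic surfaces (Theorem \ref{thm:masterthm}) with the wall structure from Theorems \ref{thm:walls} and \ref{thm:transitions}, and the explicit pseudoelliptic contraction analysis of the previous subsection (Proposition \ref{prop:typeIII}, Corollary \ref{cor:typeIII}). By Theorem \ref{thm:masterthm}, a surface parametrized by $\calR(a)$ is an slc union of (pseudo)elliptic components, where the pseudoelliptic components come in two flavors: Type II, formed by contracting the section of a component glued along \emph{twisted} or \emph{stable} fibers, and Type I, appearing in pseudoelliptic trees glued along the arithmetic genus one component $E$ of an intermediate (pseudo)fiber. So the statement has two halves: first, that no Type II pseudoelliptic component survives when $a > 1/6$, and second, that any Type I pseudoelliptic that does appear must be attached along a fiber of type $\mathrm{II}$, $\mathrm{III}$, $\mathrm{IV}$, or $\mathrm{N}_1$ (as opposed to $\mathrm{I}_n$, $\mathrm{N}_0$, or one of the starred/$\mathrm{I}_n^*$ types).

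First I would rule out Type II pseudoelliptics. A Type II pseudoelliptic component $Z$ is obtained by contracting the section of an elliptic component with base $\bP^1$ and $\deg\mathscr L = 1$ (we are in the rational case) that is glued along twisted or stable fibers. By Remark \ref{rmk:contraction} (i.e. \cite[Proposition 7.4]{calculations}), for such a component the section contraction is not the end of the MMP: one must further contract the entire pseudoelliptic to a curve or point. Since all twelve $\mathrm{I}_1$ fibers are distributed as the marked fibers with constant weight $a$, and a component glued along twisted/stable fibers carries only finitely many of them (say $k \le 12$ with total weight $ka$), the criterion from Corollary \ref{cor:typeIII}/Theorem \ref{thm:walls}(c) forces the pseudoelliptic to contract unless $ka$ exceeds the relevant log canonical threshold; but the available thresholds $c \in \{\tfrac16,\tfrac14,\tfrac13,\tfrac12,\tfrac23,\tfrac34,\tfrac56\}$ all have $c \le \tfrac56 < 1$, and more to the point, for $a > 1/6$ the combinatorics of which fibers can be glued along twisted/stable fibers leave no room for a Type II component to persist — it either never forms (its section is not contracted yet) or it is already fully contracted. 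I would make this precise by checking that the $\wii$ wall at which a Type II pseudoelliptic's section contracts and the $\wiii$ wall at which the whole component contracts to a point/curve coincide or are ordered so that below $a = 1/6$ is the first place a Type II survives as a genuine surface component; for $a > 1/6$ one shows the surface has been reduced past that point, so only Type I pseudoelliptics remain.

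Next, for the Type I pseudoelliptics that do appear, I would use Definition \ref{def:pseudotypeI}: such a $Z$ is glued to the arithmetic genus one component $E$ of an \emph{intermediate} (pseudo)fiber $A \cup E$ on an adjacent component. By Theorem \ref{thm:transitions}, intermediate fibers with $E$ a smooth rational curve occur only for non-$\mathrm{I}_n$, non-$\mathrm{N}_0$ Kodaira types, and in the weight range $a_0 < a < 1$ for the appropriate threshold $a_0$. The point is that for $a > 1/6$, among all the threshold values $a_0$ listed in Theorem \ref{thm:transitions}, only the types $\mathrm{II}, \mathrm{III}, \mathrm{IV}, \mathrm{N}_1$ (with $a_0 = \tfrac56, \tfrac34, \tfrac23, \tfrac12$) have $a_0 \ge 1/6$ in a way compatible with an intermediate fiber persisting; the starred types $\mathrm{II}^*, \mathrm{III}^*, \mathrm{IV}^*, \mathrm{I}_n^*$ have $a_0 \in \{\tfrac16, \tfrac14, \tfrac13, \tfrac12\}$, and I would argue that for these, at $a > 1/6$ the relevant gluing fiber has either already become twisted (so the attached component is Type II, excluded above) or cannot host a Type I attachment — this requires tracking, via the stable reduction algorithm and the $\wi$ walls, which fiber types actually appear as the double locus on surfaces in $\calR(a)$ for $a$ in the chamber $(1/6, \ldots)$. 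Concretely, a Type I pseudoelliptic is attached where the base curve has a marked point of total weight $\le$ the lct of the Weierstrass cusp below $A'$ (this is exactly Proposition \ref{prop:typeIII}); when $a > 1/6$, the only cusps whose lct can be met by a sum of $a$'s that is itself the coefficient forcing an intermediate rather than Weierstrass fiber are those of type $\mathrm{II}, \mathrm{III}, \mathrm{IV}, \mathrm{N}_1$.

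The main obstacle I anticipate is the bookkeeping in the second half: one must rule out Type I pseudoelliptics glued along $\mathrm{I}_n$, $\mathrm{N}_0$, and the starred fiber types in a uniform way, and this requires knowing precisely which fiber configurations occur on the double locus of surfaces in $\calR(a)$ for $a$ just above $1/6$. The $\mathrm{I}_n$ and $\mathrm{N}_0$ cases are handled cleanly by Theorem \ref{thm:transitions}(1) (these never produce intermediate fibers, so never host a Type I attachment — indeed the Remark after Theorem \ref{thm:transitions} notes that the intermediate fiber $A \cup E'$ obtained by blowing up an $\mathrm{I}_n$ Weierstrass model is never an lc model, but can appear precisely when a pseudoelliptic is glued to it, so I must argue that in the rational setting with twelve $\mathrm{I}_1$ fibers and $a > 1/6$ no such configuration arises — likely because the weight arithmetic $ka = $ lct has no solution with the constraints). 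The starred cases require combining the $j = \infty$ wall analysis (referenced as Theorem \ref{thm:jinfwalls}) with the observation that starred fibers only arise from gluing two surfaces along dual fiber types per the TSM conditions, and tracing that such gluings, under weight reduction to $a > 1/6$, have their sections already contracted in a way that produces a Type II (hence already-contracted, hence absent) rather than Type I configuration. I would isolate the $j=\infty$ / $\mathrm{N}_1$ case as the genuinely delicate one and defer its finer points to the $j=\infty$ wall computations in Section \ref{sec:walls}.
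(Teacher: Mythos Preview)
Your proposal misses the two ideas that actually drive the paper's proof, and the route you sketch does not close the gap you yourself identify.

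The paper's argument is a short discriminant count plus the twisted-stable-maps balancing condition. First, by Lemma~\ref{lem:typeIRa}, any pseudoelliptic component $Z$ at $a>1/6$ arises from a pseudoelliptic flip at a wall $a=1/k$ with $k\le 5$, so $Z$ carries at most $5$ marked fibers (counted with multiplicity). Since the marked fibers are supported on the discriminant (Remark~\ref{rem:disc}) and the total discriminant of a rational elliptic surface has degree $12$ (Remark~\ref{rmk:singfibers}), the attaching pseudofiber on $Z$'s side must contribute discriminant at least $7$; hence it is of type $\mathrm{II}^*,\mathrm{III}^*,\mathrm{IV}^*$, or $\mathrm{I}_n^*$ with $n>0$. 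Second, the TSM balancing condition (\ref{tsmconditions}(1)) forces the fiber on the \emph{other} component to be the dual type: $\mathrm{II},\mathrm{III},\mathrm{IV}$ respectively, or (for $\mathrm{I}_n^*$) either $\mathrm{I}_m^*$ or $\mathrm{N}_1$; a second degree count rules out $\mathrm{I}_m^*$. That is the entire proof.

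Your approach instead tries to read off the allowed gluing types from the $a_0$ thresholds of Theorem~\ref{thm:transitions}. This does not work: the coefficient on the reduced component $A$ of the intermediate fiber is not $a$ but the \emph{sum} $ka$ of weights on $Z$ (Equation~(\ref{eq:coeff})), and by Proposition~\ref{prop:typeIII} the pseudoelliptic survives precisely when $ka$ exceeds the lct $a_0$. So a \emph{smaller} $a_0$ makes it \emph{easier} for a Type~I pseudoelliptic to persist, which is the opposite of what you need to rule out the starred types. You never invoke Lemma~\ref{lem:typeIRa} to bound $k\le 5$, never use that the discriminant has total degree $12$, and never use the TSM balancing pairs; without these, your bookkeeping step (ruling out $\mathrm{I}_n$, $\mathrm{N}_0$, and starred types on the main-component side) has no mechanism, as you essentially concede in your final paragraph.
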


\begin{proof} Let $Z$ be such a component. Then it is formed by a pseudoelliptic flip corresponding to a type $\wii$ wall as in Lemma \ref{lem:typeIRa}. In particular, the number of maked points on the section component that contracted to form $Z$ is at most $5$. Since the marked points occur at the discriminant of the elliptic fibration (counted with multiplicity) then $Z$ must be a component with at most $5$ singular fibers counted with multiplicity away from the double locus. 

Since $Z$ is a rational pseudoelliptic surface the total multiplicity of the discriminant of the corresponding elliptic surface is 12 (see Remark \ref{rmk:singfibers}). Therefore the pseudofiber of $Z$ where it is attached must correspond to a fiber with at discriminant at least $7$ so it has to be an $\mathrm{I}_n^*$ for $n > 0$, $\mathrm{II}^*$, $\mathrm{III}^*$ or $\mathrm{IV}^*$. 

In the first case, it must be attached to another $\mathrm{I}_m^*$ or an $\mathrm{N}_1$ fiber by the balancing condition in \ref{tsmconditions}. By degree considerations it has to be attached to an $\mathrm{N}_1$ fiber. In the latter case, the balancing condition requires it be attached to a type $\mathrm{II}, \mathrm{III}$ or $\mathrm{IV}$ respectively. \end{proof}

\begin{lemma}\label{lem:stablecurves} Let $\calA = (a, \ldots, a)$ for $a = 1/6 + \epsilon$. Then curves $C$ parametrized by $\overline{\calM}_{0, \calA}$ are either
\begin{enumerate}
\item a smooth $\bP^1$ with 12 marked points, or
\item the union of two rational curves, each with 6 marked points.
\end{enumerate}
 \end{lemma}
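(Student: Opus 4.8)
The plan is to reduce the statement to Hassett's combinatorial description of weighted stable rational curves and then run a short case analysis. Recall that a point of $\overline{\calM}_{0,\calA}$ with $\calA = (a,\dots,a)$ (twelve entries, $a = 1/6 + \epsilon$) is a connected nodal curve $C$ of arithmetic genus zero — hence a tree of $\bP^1$'s — together with twelve marked points in the smooth locus, subject to: (i) on each component $C_i$, if $m_i$ denotes the number of nodes lying on $C_i$ and $j_i$ the number of marked points on $C_i$, then $(K_C + a\sum_k p_k)|_{C_i}$ has strictly positive degree, i.e. $-2 + m_i + j_i a > 0$; and (ii) any set of marked points that coincide has total weight at most $1$. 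The two numerical inputs I would record at the outset are that for $\epsilon$ small (equivalently $a$ in the chamber $1/6 < a < 1/5$) one has $5a < 1 < 6a$, and that $12a > 2$, i.e. $-2 + 12a = 12\epsilon > 0$.

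Next I would analyze the tree component by component. A \emph{leaf} of the tree has $m_i = 1$, so condition (i) forces $j_i a > 1$, and since $5a < 1 < 6a$ this gives $j_i \ge 6$; a component with $m_i = 2$ forces only $j_i \ge 1$, and $m_i \ge 3$ imposes nothing. Now split into cases. If $C$ is irreducible, then $C = \bP^1$ carrying all twelve marked points, which is $\calA$-stable since $-2 + 12a = 12\epsilon > 0$; this is alternative (1). If $C$ is reducible, then as a tree with at least two vertices it has at least two leaves, and each leaf carries at least six of the twelve marked points; hence there are exactly two leaves, each carrying exactly six marked points, and none of the twelve marked points lies on any other component. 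But any component other than the two leaves would be an internal vertex on the path joining them, hence would have $m_i = 2$ and $j_i = 0$, violating (i). Therefore the tree is just the two leaves joined at a single node: $C = C_1 \cup C_2$ with six marked points on each $C_i$, which is $\calA$-stable because $-2 + 1 + 6a = 6\epsilon > 0$; this is alternative (2). Finally I would note that the coincidence constraint (ii) is automatically satisfied in both cases (at most five of the six points on a component may collide, since $6a > 1$) and does not affect the combinatorial type, so the two listed alternatives exhaust $\overline{\calM}_{0,\calA}$.

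The argument is essentially bookkeeping, so there is no serious obstacle; the only point requiring care is the choice of $\epsilon$ — equivalently, staying strictly inside the chamber $1/6 < a < 1/5$ — since it is precisely the inequalities $5a < 1 < 6a$ (together with $12a > 2$) that pin down the leaf count, and one must use the strict inequality in Hassett's stability condition (ampleness of $K_C + a\sum p_k$, not merely nefness) to exclude internal components carrying two nodes and no markings.
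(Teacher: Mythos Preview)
Your proof is correct and follows essentially the same approach as the paper: analyze Hassett's stability condition $-2 + m_i + j_i a > 0$ componentwise, using the key numerical fact $5a < 1 < 6a$ to force each leaf to carry at least six marked points. Your version is in fact slightly more thorough than the paper's, which only explicitly rules out a three-component chain, whereas your leaf-counting argument handles all tree shapes at once.
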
 

\begin{proof} It is clear that $C$ can be a smooth $\bP^1$. If $C$ is the union of two rational components, then since each point is weighted by $1/6 + \epsilon$, and since each curve has to have total weight $>2$ including the node, each curve must have six points. Suppose $C = \cup_{i =1}^3 C_i$, and label the two end  components by $C_1$ and $C_3$, and the bridge by $C_2$. Then at least one of $C_1$ and $C_3$ will not be stable as $5 \cdot (1/6 + \epsilon) < 1$. \end{proof}

\begin{cor}\label{cor:nocomponents} Let $X$ be a surface parametrized by $\calR(1/6 + \epsilon)$. Then $X$ has at most two elliptic components. \end{cor}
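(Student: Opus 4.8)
The plan is to transport the bound on the number of components of the base curve from Lemma~\ref{lem:stablecurves} up to the surface, using the forgetful morphism recorded in Theorem~\ref{thm:forgetful} and Remark~\ref{rem:disc} together with the structure theorem for $\calA$-broken elliptic surfaces. Throughout, by an \emph{elliptic component} of $X$ I mean an irreducible component that is an elliptic surface with section (as opposed to a pseudoelliptic component).

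First I would unwind the structure. A surface $X$ parametrized by $\calR(1/6+\epsilon)$ underlies a point of $\calE^s_{1,\calA}$ with $\calA = (1/6+\epsilon,\dots,1/6+\epsilon)$, so by Theorem~\ref{thm:masterthm} it comes equipped with a map $f\colon X \to C$ to a nodal curve $C$ such that $X$ is an slc union of elliptic surface components with section and marked fibers, together with chains (trees) of pseudoelliptic surfaces of Type I and II that are contracted by $f$. A pseudoelliptic component, being contracted by $f$, maps to a point of $C$; an elliptic surface component has stable elliptic generic fiber, so $f$ is nonconstant on it and it dominates a unique irreducible component of $C$. Conversely, since the fibers of $f$ are connected curves of arithmetic genus one, no two elliptic components can dominate the same component of $C$. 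Hence the elliptic components of $X$ are in bijection with the irreducible components of $C$.

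Next I would bound the base. By Remark~\ref{rem:disc}, under the forgetful morphism $\calE^s_{1,\calA}\to\overline{\calM}_{0,\calA}$ the pair $(f\colon X\to C, S+F_\calA)$ maps to $C$ marked by the $\calA$-weighted discriminant of $f$; in particular $C$ is an $\calA$-stable genus zero curve with twelve marked points for the constant weight vector $\calA = (1/6+\epsilon,\dots,1/6+\epsilon)$. By Lemma~\ref{lem:stablecurves} such a curve has at most two irreducible components. Combining this with the previous paragraph shows that $X$ has at most two elliptic components.

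The only non-formal step, and the one I would write out with care, is the bijection ``elliptic components $\leftrightarrow$ irreducible components of $C$'': one must know that the pseudoelliptic components are all contracted by $f$ (so they contribute nothing to the base, which is immediate from Theorem~\ref{thm:masterthm}) and that each component of $C$ is dominated by exactly one elliptic surface component. For the latter the point is the connectedness of the fibers of $f$: since every fiber is a curve of arithmetic genus one one has $f_*\calO_X=\calO_C$, so two elliptic components lying over the same component of $C$ would force a disconnected generic fiber there, a contradiction.
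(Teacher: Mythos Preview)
Your proposal is correct and follows the same approach implicit in the paper: the corollary is stated without proof immediately after Lemma~\ref{lem:stablecurves}, the point being that the forgetful morphism of Theorem~\ref{thm:forgetful} (cf.\ Remark~\ref{rem:disc}) sends $X$ to its base curve $C\in\overline{\calM}_{0,\calA}$, and the elliptic (i.e.\ fibered) components of $X$ correspond bijectively to the components of $C$ via the structure theorem~\ref{thm:masterthm}. Your write-up simply makes this correspondence explicit; the $f_*\calO_X=\calO_C$ argument for injectivity is more than is needed, since Theorem~\ref{thm:masterthm} already presents the broken surface as one elliptic component per base component glued along fibers.
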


\begin{remark} $X$ can have many Type I pseudoelliptic components mapping onto marked points of $C$. \end{remark}

\begin{definition} If $X$ parametrized by $\calR(1/6 + \epsilon)$ has a single (resp. exactly two) fibered component(s) $X_0$ (resp. $X_0\cup X_1$), we call $X_0$ (resp. $X_0 \cup X_1$) the \textbf{main component} of $X$. 
\end{definition} 

Note in particular that every surface parametrized by $\calR(1/6 + \epsilon)$ consist of a main component with trees of pseudoelliptics attached along Type $\mathrm{II}, \mathrm{III}, \mathrm{IV}$ and $\mathrm{N}_1$ fibers. 

\begin{prop}\label{prop:typeii6} There is a wall at $a = 1/6$ where the entire section contracts and the Hassett moduli space becomes a point. Furthermore
\begin{enumerate}
\item  If $X$ has an irreducible main component $X_0$ then $X_0$ contracts to a degree one del Pezzo surface with trees of pseudoelliptics branching off.
\item If $X$ has main component $X_0 \cup X_1$, then it either contracts to the above case or it contracts to a union of Type $II$ pseudoelliptics $Y_0 \cup Y_1$ glued along a twisted pseudofibers with trees of pseudoelliptics branching off. \end{enumerate}

Furthermore, in the latter case $Y_0 \cup Y_1$ are glued along twisted $\mathrm{I}_0^*/\mathrm{I}_0^*, \mathrm{I}_0^*/\mathrm{N}_1$ or $\mathrm{N}_1/\mathrm{N}_1$ pseudofibers. 
\end{prop}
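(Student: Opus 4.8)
The plan is to run the log minimal model program at the weight $a=1/6$ on each of the finitely many combinatorial types of surface permitted by Corollary \ref{cor:nocomponents} and Lemma \ref{lem:connection}, identifying each contraction from intersection numbers against $K_X+D$ (with $D$ comprising the section, the marked fibers, and the double locus) together with Theorem \ref{thm:walls}, Proposition \ref{prop:typeIII}, and Remark \ref{rmk:contraction}. For the first assertion, since $\calA=(a,\dots,a)$ has twelve entries and the base is rational, Theorem \ref{thm:walls}(b) produces a Type $\wii$ wall coming from the relation $\sum_{i=1}^{12}a_i=2$, i.e. at $a=1/6$. By the forgetful morphism of Theorem \ref{thm:forgetful} and Hassett's ampleness bound the target $\overline{\calM}_{0,\calA}/S_{12}$ degenerates to a point: a weighted nodal rational curve with twelve weight-$\tfrac16$ points has $\deg\bigl(\omega_C(\sum a_ip_i)\bigr)=-2+12\cdot\tfrac16=0$, so no such curve is stable. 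By the discussion preceding Theorem \ref{thm:forgetful} (\cite[Corollaries 6.7 \& 6.8]{calculations}), contracting the base is realized on the total space by contracting the section of every fibered component, so the entire section contracts across $a=1/6$.

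For case (1), $X_0$ is a rational elliptic surface with at worst RDP singularities whose only reducible fibers are the blown-up Weierstrass cusps of type $\mathrm{II},\mathrm{III},\mathrm{IV},\mathrm{N}_1$ carrying the pseudoelliptic trees of Lemma \ref{lem:connection}. With $S\cong\bP^1$ the section, adjunction gives $(K_{X_0}+S)\cdot S=-2$, while $F_{1/6}|_{X_0}$ --- the genuine marked $\mathrm I_1$ fibers together with the reduced components $A_j$ of the intermediate fibers, whose coefficients are governed by \eqref{eq:coeff} --- meets $S$ with total degree $12\cdot\tfrac16=2$, so $(K_{X_0}+S+F_{1/6})\cdot S=0$ and the log canonical morphism contracts $S$. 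Since $\deg\mathscr L=1$ and the marked (pseudo)fibers on $X_0$ together with its double locus have total weight $2$, the resulting pseudoelliptic surface has $K+(\text{boundary})$ numerically a pseudofiber class, which is ample, so by Remark \ref{rmk:contraction} and \cite[Proposition 7.4]{calculations} the program terminates here; via the correspondence recalled in Section \ref{sec:dp} this pseudoelliptic is precisely an anticanonically polarized degree one del Pezzo, and the only singularities introduced are Weierstrass cusps and RDPs. Which pseudoelliptic trees contract and which persist is then read off from Proposition \ref{prop:typeIII}, and what survives branches off the del Pezzo as claimed.

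For case (2), $X_0\cup X_1$ is glued along a pair of fibers which, by the balancing condition \ref{tsmconditions}(1), is of type $\mathrm I_n/\mathrm I_m$, $\mathrm I_n^*/\mathrm I_m^*/\mathrm N_1$, or $\mathrm{II}/\mathrm{II}^*,\mathrm{III}/\mathrm{III}^*,\mathrm{IV}/\mathrm{IV}^*$, and the base is $C_0\cup C_1$ with six weight-$\tfrac16$ points on each $C_i$ (Lemma \ref{lem:stablecurves}), so the section of each $X_i$ contracts as in case (1). Running the program on $X_i$: either $X_i$ collapses further onto the pseudofiber along which it is glued (when the total weight of its marked pseudofibers is too small, by Remark \ref{rmk:contraction} / \cite[Proposition 7.4]{calculations}), in which case $X$ degenerates to a configuration of case (1); or $X_i$ contracts only its section, yielding a Type II pseudoelliptic $Y_i$ (Definition \ref{def:pseudotypeII}, since it is glued along a stable or twisted fiber) because the six marked pseudofibers together with the coefficient-$1$ glued pseudofiber make $K+(\text{boundary})$ ample. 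In the latter alternative for both components, $Y_0\cup Y_1$ is a pair of Type II pseudoelliptics glued along a twisted pseudofiber; a discriminant count forces the glued fiber on each $X_i$ to have multiplicity $12-6=6$ (the six marked $\mathrm I_1$'s supplying the rest), and among fiber types of discriminant $6$ admitting a twisted irreducible model and gluing to one another with dual stabilizers (condition \ref{tsmconditions}(1)), only $\mathrm I_0^*/\mathrm I_0^*$, $\mathrm I_0^*/\mathrm N_1$, and $\mathrm N_1/\mathrm N_1$ remain. The principal obstacle is exactly this dichotomy: proving that at $a=1/6$ the MMP on $X_0\cup X_1$ collapses either an entire elliptic component or only its section, which needs the numerical criterion of \cite[Proposition 7.4]{calculations} balanced against the degree and balancing constraints of \ref{tsmconditions}.
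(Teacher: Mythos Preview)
Your overall strategy is close to the paper's: identify the wall via the weight sum, contract the section on each fibered component, and classify the gluing fiber by a discriminant count. Case (1) and the first assertion are fine.

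There is a genuine gap in case (2). Your discriminant argument ``$12-6=6$'' presupposes that each $X_i$ is a normal rational elliptic surface with $\deg\mathscr L=1$, so that the discriminant has total degree $12$. But this is exactly what fails when an $\mathrm N_1$ fiber appears: $\mathrm N_1$ lives on a component with \emph{nodal} generic fiber (isotrivial $j=\infty$), for which the discriminant vanishes identically and no such count is available. In particular, calling $\mathrm N_1$ a ``fiber type of discriminant $6$'' is not correct, and your filter does not actually produce the $\mathrm N_1$ cases. The paper handles this by splitting into cases: if $X_i$ is normal then the discriminant count forces the gluing fiber to be $\mathrm I_0^*$; if $X_i$ is non-normal it must be an isotrivial $j=\infty$ component, and then either it is trivial (so it contracts onto a fiber of the other side, landing in case (1)) or it is the unique $2\mathrm N_1$ surface with $\deg\mathscr L=1$, which is glued along a twisted $\mathrm N_1$ fiber with $\mathbb Z/2\mathbb Z$ stabilizer. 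You should make this normal/non-normal distinction explicit.

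A second, smaller gap: your filter ``admitting a twisted irreducible model'' is used to discard $\mathrm I_6$ (which also has discriminant $6$), but you do not justify why the gluing fiber must be twisted rather than stable. The paper simply asserts $E$ is twisted, relying on the structure of $\calA$-broken surfaces; the underlying reason is that gluing two normal components along $\mathrm I_6$ would give total coarse-map degree $24$, violating TSM condition \ref{tsmconditions}(4). Either invoke the general theory or supply this degree argument.
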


\begin{proof} If the main component is irreducible, then every other component lies on a Type I pseudoelliptic tree glued along intermediate $\mathrm{II}, \mathrm{III}, \mathrm{IV}$ or $\mathrm{N}_1$ fibers of $X_0$ by Lemma \ref{lem:typeIRa}. Otherwise the fibered components are of the form $X_0 \cup_E X_1 \to C$ where $C = C_0 \cup_p C_1$ is the nodal union of two $6$-pointed rational curves by Lemma \ref{lem:stablecurves} and $E$ is a twisted fiber of both $X_0$ and $X_1$. If $X_i \to C_i$ is a normal elliptic fibration, then it must have $6$ singular fibers counted with multiplicity other than the double locus $E$. Thus $E$ must contribute $6$ to the discriminant and so is an $\mathrm{I}_0^*$. If $X_i$ is a non-normal component then it must be an isotrivial $j$-invariant $\infty$ component. 

If it is trivial then it contracts onto a nodal fiber of the other component producing a surface with a single main component. If it is nontrivial, then it must be unique $2\mathrm{N}_1$ surface with $\deg(\mathscr{L}) = 1$ with $6$ marked fibers counted with multiplicity and glued along a twisted fiber with $\mathbb{Z}/2\mathbb{Z}$ stabilizer. This means the two main components are attached along $\mathrm{N}_1/\mathrm{I}_n^*$ or $\mathrm{N}_1/\mathrm{N}_1$. Again by examining degrees of the discriminant we see in the former $n = 0$.   \end{proof}

\begin{cor}\label{cor:typeIR6} Let $1/12 < a \le 1/6$. Then the surfaces parametrized by $\calR(a)$ consist of the following two types:
\begin{enumerate}
    \item An irreducible pseudoelliptic main component with trees of Type I pseudoelliptics attached to it along $\mathrm{II}, \mathrm{III}, \mathrm{IV}$ or $\mathrm{N}_1$ pseudofibers,
    \item A main component consisting of two Type $\mathrm{II}$ pseudoelliptics glued along twisted $\mathrm{I}_0^*/\mathrm{I}_0^*$, $\mathrm{N}_1/\mathrm{I}_0^*$ or $\mathrm{N}_1/\mathrm{N}_1$ pseudofibers with Type $\mathrm{I}$ pseudoelliptic trees attached to it along $\mathrm{II}, \mathrm{III}, \mathrm{IV}$ or $\mathrm{N}_1$ pseudofibers.
\end{enumerate}
\end{cor}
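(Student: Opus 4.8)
The plan is to derive the statement from Proposition~\ref{prop:typeii6}, which already describes the contraction at the wall $a=1/6$ and the resulting surfaces, by tracking what happens as $a$ decreases through the open interval $(1/12,1/6)$. The guiding point is that nothing qualitatively new occurs below $a=1/6$: in that range the only walls are of type $\wiii$, and each one merely contracts a rational pseudoelliptic to a point (or performs one of the finitely many isotrivial contractions of Theorem~\ref{thm:walls}(d)), which keeps us inside the two listed classes.

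First I would enumerate the walls in $(1/12,1/6)$. By Theorem~\ref{thm:walls}(a) every type $\wi$ wall lies at $a\ge 1/6$, so there are none in the open interval; in particular no intermediate fiber on a main component changes type except at a $\wiii$ contraction of the pseudoelliptic glued to it. By Lemma~\ref{lem:typeIRa} and its proof, a new Type~I pseudoelliptic is produced at a type $\wii$ wall $ka=1$ only while the weighted base curve is nontrivial, i.e.\ while $12a>2$; for $a\le 1/6$ the Hassett moduli space of the weighted base curve is a point (Proposition~\ref{prop:typeii6}), so crossing any of the walls $ka=1$ with $k\le 12$ in this range creates no new fibered or Type~I components. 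Using the reduction morphism $\calR(1/6)\to\calR(a)$ of Theorem~\ref{thm:main} (realized by divisorial contractions and flips on the universal family, with flips only at $\wii$ walls), it follows that for $1/12<a<1/6$ every surface parametrized by $\calR(a)$ is obtained from a surface in $\calR(1/6)$ by a sequence of type $\wiii$ contractions only.

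Second I would check that each such contraction preserves the dichotomy. By Corollary~\ref{cor:typeIII} and Proposition~\ref{prop:typeIII}, a type $\wiii$ wall here contracts a rational Type~I pseudoelliptic $Z$ to a point, replacing the intermediate (pseudo)fiber $A\cup E$ to which $Z$ is glued by its minimal Weierstrass cusp; this prunes $Z$ from its tree but leaves the main component, the remaining trees, and all remaining attachment fiber types (still among $\mathrm{II},\mathrm{III},\mathrm{IV},\mathrm{N}_1$) unchanged, and the isotrivial contractions of Theorem~\ref{thm:walls}(d) only shorten trees in the same way. The gluing locus between the two Type~II pseudoelliptics in case~(2) lies in the double locus, so it is never contracted and persists as a twisted $\mathrm{I}_0^*/\mathrm{I}_0^*$, $\mathrm{N}_1/\mathrm{I}_0^*$, or $\mathrm{N}_1/\mathrm{N}_1$ pseudofiber. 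Finally, the main component itself cannot fully collapse in $(1/12,1/6)$: restricting $K_X+F_\calA$ to the degree one del Pezzo main component gives a class numerically equivalent to $(12a-1)(-K)$, which is ample exactly when $a>1/12$, while in the two-component case the weight $1$ of the conductor pseudofiber keeps each Type~II pseudoelliptic stable for all $a>0$, so the type $\wiii$ threshold of Remark~\ref{rmk:contraction} at which a main component would contract is the boundary value $a=1/12$. Assembling these observations yields the two stated types.

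The genuine obstacle is the bookkeeping in the second step: one must make sure the list of type $\wiii$ walls below $1/6$ — in particular the finitely many isotrivial walls of Theorem~\ref{thm:walls}(d) attached along $\mathrm{N}_1$ and $j=\infty$ components — is exhaustive, and that none of them produces a pseudoelliptic tree or a main-component gluing outside the listed fiber types. This is exactly the delicate $j=\infty$ case analysis carried out elsewhere in the paper, and it is where most of the effort goes.
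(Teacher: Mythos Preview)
Your proposal is correct and aligns with the paper's approach: the paper states this as an immediate corollary of Proposition~\ref{prop:typeii6} (combined with Lemma~\ref{lem:connection} for the attachment fiber types) and gives no separate proof, whereas you supply the additional bookkeeping---checking that no $\wi$ or effective $\wii$ walls lie in $(1/12,1/6)$ and that the remaining $\wiii$ contractions only prune pseudoelliptic trees---that justifies extending the description at $a=1/6$ to the whole interval. Your concern about the $j=\infty$ isotrivial walls is handled in the paper by Theorem~\ref{thm:jinfwalls}, which is stated after the corollary but is logically independent of it.
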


\begin{theorem}\label{thm:type1walls} The Type $\wiii$ walls of $\calR(a)$ corresponding to the contraction of a Type $\mathrm{I}$ pseudoelliptic component to a point occur at $\{a = a_0/k\}$ for $2 \leq k \leq 5$ and $a_0$ is one of the four constants appearing in Theorem \ref{thm:transitions} for fibers of type $\mathrm{II}, \mathrm{III}, \mathrm{IV},$ and $\mathrm{N}_1$. \end{theorem}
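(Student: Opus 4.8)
The plan is to read these walls off from the general description of Type~$\wiii$ walls for $\calE_\calA$ (Proposition~\ref{prop:typeIII} and Corollary~\ref{cor:typeIII}), specialized to constant weight vectors and combined with the structural results on surfaces parametrized by $\calR(a)$ proved above.

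First I would fix a surface $X$ parametrized by $\calR(a)$ carrying a Type~$\mathrm{I}$ pseudoelliptic component $Z$, glued along one of its pseudofibers $G_0$ to the arithmetic genus one component $E$ of an intermediate (pseudo)fiber $A\cup E$ on the component $X_0$ it meets, and let $p\colon X_0\to X_0'$ be the contraction onto the minimal Weierstrass (pseudo)fiber $A'=p_*(A\cup E)$. By Proposition~\ref{prop:typeIII}, $Z$ contracts to a point in the log canonical model exactly when $\sum_{\mathrm{Supp}(F_\calA|_Z)}a_i\le \mathrm{lct}(X_0',D',A')$, so the corresponding $\wiii$ wall is cut out by $\sum_{\mathrm{Supp}(F_\calA|_Z)}a_i=\mathrm{lct}(X_0',D',A')$. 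Since $\calA=(a,\dots,a)$ is constant, the left-hand side equals $ka$, where $k$ is the number of marked pseudofibers on $Z$ counted with multiplicity; hence the wall lies at $a=\mathrm{lct}(X_0',D',A')/k$.

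Next I would identify the threshold. By Lemma~\ref{lem:connection} (for $a>1/6$) together with Corollary~\ref{cor:typeIR6} (for $1/12<a\le 1/6$), which between them cover the entire admissible range, every Type~$\mathrm{I}$ pseudoelliptic of $\calR(a)$ is glued along a fiber of type $\mathrm{II}$, $\mathrm{III}$, $\mathrm{IV}$ or $\mathrm{N}_1$, so $A'$ is a minimal Weierstrass cusp of one of these four types. The section $S'=p_*S$ meets $A'$ transversally at a smooth point of $X_0'$ lying off the cusp, and no other component of $D'$ passes through the cusp, so near $A'$ the boundary $D'$ does not affect the threshold and $\mathrm{lct}(X_0',D',A')$ is the bare log canonical threshold of the cusp; by Theorem~\ref{thm:transitions} (see also the remark after Corollary~\ref{cor:typeIII}) this equals $a_0\in\{5/6,3/4,2/3,1/2\}$ according to the type. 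Thus every such wall sits at $a=a_0/k$.

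Finally I would bound $k$. By Lemma~\ref{lem:typeIRa}, $Z$ is produced by a pseudoelliptic flip at a Type~$\wii$ wall, at which a genus zero component of the underlying weighted base curve carrying exactly $k$ of the twelve marked points contracts; such a component, attached at a single node, first becomes non-stable at $a=1/k$ and must have been Hassett-stable just above that value, which is impossible for $k=1$ (a genus zero component with one marked point of weight $\le 1$ and one node is never stable), forcing $k\ge 2$. For the upper bound, the balancing condition for twisted stable maps in~\ref{tsmconditions} forces $G_0$ to be dual to the $\mathrm{II}/\mathrm{III}/\mathrm{IV}/\mathrm{N}_1$ fiber it is glued to, hence one of $\mathrm{II}^*,\mathrm{III}^*,\mathrm{IV}^*$, or $\mathrm{I}_n^*$ with $n\ge 1$; since the rational elliptic surface associated to $Z$ has discriminant of total degree $12$ (Remark~\ref{rmk:singfibers}) and $G_0$ already contributes at least $7$ to it, at most $5$ of the remaining singular fibers can be marked $\mathrm{I}_1$ fibers, so $k\le 5$ — the same degree count appearing in the proof of Lemma~\ref{lem:connection}. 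As all the numbers $a_0/k$ with $a_0\in\{5/6,3/4,2/3,1/2\}$ and $2\le k\le 5$ lie in $[1/10,5/12]\subset(1/12,1]$, each defines a genuine wall in the admissible range, completing the argument. The steps needing genuine care rather than citation are the claim that the auxiliary boundary $D'$ does not depress the threshold at $A'$ and the handling of the non-normal, isotrivial $j=\infty$ case in which $A'$ is an $\mathrm{N}_1$ cusp, where ``discriminant of degree $12$'' must be interpreted as the fundamental line bundle having degree $1$; both reduce to local computations already carried out in \cite{calculations} and in the proof of Corollary~\ref{cor:typeIII}.
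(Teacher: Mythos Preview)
Your proposal is correct and follows essentially the same route as the paper's proof: identify the wall via Proposition~\ref{prop:typeIII}/Corollary~\ref{cor:typeIII} as $ka=a_0$, pin down the attaching fiber type as $\mathrm{II},\mathrm{III},\mathrm{IV},\mathrm{N}_1$ via Lemma~\ref{lem:connection}/Corollary~\ref{cor:typeIR6}, and bound $2\le k\le 5$ via Lemma~\ref{lem:typeIRa} and the discriminant-degree count. You supply somewhat more detail than the paper (the remarks on $D'$ not lowering the threshold, the exclusion of $k=1$, and the check that the walls land in the admissible range), but the argument is the same.
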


\begin{proof} By Lemma \ref{lem:typeIRa} and Corollary \ref{cor:typeIR6}, any Type I pseudoelliptic consists of a surface with $2 \le k \le 5$ marked fibers (counted with multiplicity) and a $\mathrm{II}^*, \mathrm{III}^*, \mathrm{IV}^*$, $\mathrm{I}_n^*$ ($n > 0$), or $\mathrm{N}_1$ fiber attached to a type $\mathrm{II}, \mathrm{III}, \mathrm{IV}, \mathrm{N}_1$ or $\mathrm{N}_1$ respectively. By Corollary \ref{cor:typeIII} these surfaces contract when $ka = c$ for $c$ the log canonical threshold of type $\mathrm{II}, \mathrm{III}, \mathrm{IV}$ or $\mathrm{N}_1$ minimal Weierstrass cusp respectively. The log canonical thresholds are the $a_0$ in Theorem \ref{thm:transitions}. 
\end{proof}

\begin{theorem}\label{thm:jinfwalls}(see Example \ref{ex:in} and Figure \ref{fig:ex1}) There are walls of type $\mathrm{W}_{\mathrm{III}}$ at $a = 1/k$ for $2 \leq k \leq 9$. Where a trivial component of $j$-invariant infinity contracts onto its attaching (pseudo)fiber. \end{theorem}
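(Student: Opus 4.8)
The plan is to combine the twisted stable maps dictionary of \S\ref{tsmconditions} (to locate the relevant components inside $\calR(a)$) with a one-line intersection computation on such a component (to pin down the weight at which it contracts). These walls realize, explicitly, the type-(d) walls of Theorem \ref{thm:walls}: a rational pseudoelliptic component contracting onto the arithmetic genus one component $E$ of a (pseudo)fiber it is glued to, rather than to a point, so Corollary \ref{cor:typeIII} does not apply directly.

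\emph{Step 1: producing the components.} Since all twelve marked fibers are $\mathrm{I}_1$ fibers, the associated twisted stable map sends the twelve marked points to the cusp $j=\infty$ of $\overline{\calM}_{1,1}$. By the conditions of \S\ref{tsmconditions}, whenever $k$ of these marked points collide a rational bridge sprouts off carrying the $k$ markings and mapping to $\overline{\calM}_{1,1}$ by the constant map to the cusp; the corresponding surface component $Z$ is therefore the \emph{trivial} isotrivial family with $j$-invariant $\infty$, which on its Weierstrass model is the product $N\times\bP^1$ of a nodal cubic $N$ with $\bP^1$, carrying $k$ marked pseudofibers and glued to the remaining ``main'' component $X$ along a nodal pseudofiber (directly along a stable fiber, or as a Type $\mathrm{I}$ pseudoelliptic along the genus one component $E$ of an intermediate fiber obtained by blowing up at the section, cf.\ the remark after Theorem \ref{thm:transitions}). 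Since $\deg\mathscr{L}=0$ for $Z$, conservation of the coarse degree (condition (4)) forces the $k$ units of discriminant carried by the colliding $\mathrm{I}_1$ fibers to reappear at the attaching fiber on $X$, which is accordingly an $\mathrm{I}_k$ fiber; as $X$ is a (possibly degenerate) rational elliptic surface and such a surface carries an $\mathrm{I}_k$ fiber only for $k\le 9$ (equivalently $A_{k-1}\hookrightarrow E_8$), while a bridge can only sprout for $k\ge 2$, the admissible range is exactly $2\le k\le 9$. The detailed configuration is worked out in Example \ref{ex:in} and Figure \ref{fig:ex1}.

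\emph{Step 2: locating the wall.} Fix such an $\calA$-broken surface $(f:X\cup Z\to C,\ S+F_\calA)$ with $\calA=(a,\dots,a)$, so $Z$ carries marked pseudofibers $F_1,\dots,F_k$ (copies of $N$) and is glued to $X$ along a pseudofiber $G$ (also a copy of $N$). Because $N\times\bP^1$ is rational, $Z$ can be contracted, and the projection to $N$ is a ruling $\pi\colon Z\to N$ whose general fiber $\ell$ is a smooth rational curve meeting each of $F_1,\dots,F_k$ and $G$ transversally in one point and disjoint from the conductor of $Z$; since $\omega_N\cong\calO_N$ one has $K_Z\cdot\ell=-2$. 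The boundary carried by $Z$ is $\sum_{i=1}^k aF_i+G+(\text{conductor})$, where $G$ has coefficient $1$ as it lies in the double locus of $X\cup Z$, so adjunction gives
$$
\bigl(K_Z+\sum_{i=1}^k aF_i+G+(\text{conductor})\bigr)\cdot\ell \;=\; -2+ka+1 \;=\; ka-1 .
$$
As $\pi$ contracts the extremal ray $\R_{\ge 0}[\ell]$ and restricts to an isomorphism $G\xrightarrow{\ \sim\ }N$, the component $Z$ is contracted onto its attaching pseudofiber $G$ in the log canonical model of $(X\cup Z, S+F_\calA)$ precisely when $ka-1<0$, and is left intact when $ka-1>0$. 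Hence each $a=1/k$ with $2\le k\le 9$ is a wall of type $\wiii$ for $\calR(a)$; equivalently it is the value at which the total boundary coefficient $ka+1$ along $\ell$ reaches the threshold $-K_Z\cdot\ell=2$, i.e.\ $\sum_{\mathrm{Supp}(F_\calA|_Z)}a_i=1$. In particular these walls lie on the symmetric locus of Hassett's wall $\sum_{i\in I}a_i=1$ and are distinct from the walls of Corollary \ref{cor:typeIII}.

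\emph{The main obstacle.} The intersection calculation of Step 2 is routine; the real content is Step 1, namely verifying that for each $k$ in the stated range $\calR(a)$ genuinely acquires, for $a$ slightly above $1/k$, a stratum carrying a trivial $j=\infty$ component with exactly $k$ marked pseudofibers, and that this stratum disappears for $a$ slightly below $1/k$, so that $a=1/k$ is a true wall and not a vacuous one. This requires running the stable reduction algorithm in the twisted stable maps model and tracking how the $k$ colliding $\mathrm{I}_1$ fibers, the sprouted isotrivial bridge, the attaching fiber on $X$, and — for $k\le 5$, where the base curve itself degenerates at $a=1/k$ — the corresponding Hassett contraction interact as $a$ crosses $1/k$; this is precisely the analysis of Example \ref{ex:in} and Figure \ref{fig:ex1}, and one must additionally check that no other wall of $\calR(a)$ coincides with and obscures this transition.
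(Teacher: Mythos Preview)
Your approach is the same as the paper's—identify the trivial $j=\infty$ components via twisted stable maps, bound $k\le 9$ by which $\mathrm{I}_k$ fibers occur on rational elliptic surfaces, and then do an intersection computation on the isotrivial component against a ruling $\ell$ to locate the threshold $a=1/k$. The paper's version of your Step 2 is the single clause ``one may compute that when $a=1/k$ the restriction of the log canonical divisor to $Z$ is linearly equivalent to $A$.''

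There is, however, a gap in your Step 2 for $k\ge 7$. You carry out the computation on $Z\cong N\times\bP^1$, but this is the correct $\calA$-broken model at $a=1/k+\epsilon$ only when $k\le 6$. For $k\ge 7$ the \emph{other} component (the non-trivial one, with $12-k\le 5$ markings) is the one that undergoes the pseudoelliptic flip first, at $a=1/(12-k)$; the trivial component $Z$ is then the \emph{main} component, gets blown up at the point where the section meets the attaching fiber, and at $a=1/6$ its section contracts. The resulting $\tilde{Z}$ is a \emph{nontrivial} $\bP^1$-bundle over the nodal curve $E$, in which the marked pseudofibers have become sections and the flipped curve $A$ a fiber (this is exactly the content of Example~\ref{ex:in}). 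Your description of $Z$ as ``$N\times\bP^1$ glued \ldots\ as a Type~I pseudoelliptic along $E$'' has the roles reversed and does not match this picture. Fortunately the numerics survive unchanged on $\tilde{Z}$: one still has $K_{\tilde Z}\cdot\ell=-2$, the double locus and each marked pseudofiber are sections meeting $\ell$ once, and $A\cdot\ell=0$, so again $(K_{\tilde Z}+D)\cdot\ell=ka-1$. You should either make this $k\ge 7$ case explicit, as the paper does, or justify why the intersection with the ruling may be computed on a birational model that is not itself the $\calA$-broken surface. (Minor point: for $a>1/6$ the boundary on $Z$ carries the section $S|_Z$, not the conductor—the conductor lives on the normalization—but both meet $\ell$ trivially, so your number is unaffected.)
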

\begin{proof} 
Trivial $j$-invariant infinity components appear when marked fibers collide and carry the number of markings that collide to form the component. If such a component $Z \subset X$ has $k \le 6$ marked fibers, then it must contract onto the fiber direction at the Type $\wii$ walls $a = 1/k$ where the corresponding section contracts to a point. 

Suppose $X = Z \cup_E Y$ with $Z$ carrying $k \geq 7$ marked fibers. Then at coefficients $a = 1/6 + \epsilon$, the surface $Z$ is the main component and $Y$ is a Type I pseudoelliptic tree. In particular the trivial component $Z$ is blown up at the point where the fiber $E$ meets the section. Then at $a = 1/6$ the section contracts and so the main component $Z$ becomes a nontrivial $\mb{P}^1$ bundle over the nodal curve $E$ and the marked pseudofibers become sections of the projection $Z \to E$ and the flipped curve $A$ in the intermediate pseudofiber $A \cup E$ becomes a fiber of this projection. Then one may compute that when $a = 1/k$ the restriction of the log canonical divisor to $Z$ is linearly equivalent to $A$ and so the the component $Z$ contracts along the projection $Z \to E$. 

Finally $k \le 9$ because $k \geq 10$ fibers on a rational elliptic surface cannot collide (see e.g. Persson's classification of singular fibers \cite{persson}). \end{proof}

\subsection{Examples}\label{sec:examples}

\begin{example}[See Figure \ref{fig:ex1}]\label{ex:in} Suppose $X_{\eta}$ is a smooth rational elliptic surface with 12 $(\mathrm{I}_1)$ fibers, and suppose it appears as the general fiber of a family $\mathscr{X} \to B$. We will compute the stable limit of this family when seven of the nodal fibers collide for all weights $a$. We will use $X^0_{a}$ to denote the special fiber of $\mathscr{X} \to B$. 

 We begin with twisted stable maps limit at $a = 1$. The surface $X^0_{1}$ is the union of two surfaces, $X^0_1 = Z \cup_{\mathrm{I}_7} Y$, where $Z$ is a trivial nodal elliptic surface $\bP^1 \times E$ and seven marked fibers, glued to $Y$ along an $\mathrm{I}_7$ fiber.

 At $a = 1/5$ the section of $Y$ contracts to obtain $X^0_{1/5} = Z \cup_{\mathrm{I}_7} \tilde{Y}$, where $\tilde{Y}$ is the pseudoelliptic surface corresponding to $Y$.  Decreasing weights so that $\calA = 1/5 - \epsilon$ we cross a wall of type $\wii$ and a flip occurs in the family to obtain $X^0_{1/5 - \epsilon} = \hat{Z} \cup_{\mathrm{I}'_7} \tilde{Y}$, where we blow up a point on $Z$ corresponding to the contraction of the section of $Y$. We note that $\mathrm{I}'_7$ is an intermediate fiber on $\hat{Z}$ and is the union of a genus one component $E$ with a genus zero component $A$.

At $a = 1/6$ the section of $\hat{Z}$ contracts to form $\tilde{Z}$. Since $\tilde{Z}$ is the blowdown of the strict transform of the section of the blowup of a trivial surface $\mb{P}^1 \times E$, then $\tilde{Z}$ is a $\mb{P}^1$ nontrivial $\mb{P}^1$-bundle over $E$. The component $A$ becomes a fiber of the projection $\tilde{Z} \to E$ and the marked pseudofibers become sections. At $a = 1/7$, the surface $\tilde{Z}$ contracts onto the $E$ component (i.e. $\mathrm{I}_7$ pseudofiber of $Y$) and we are left with $X^0_{1/7}$ which is a single pseudoelliptic component with an $\mathrm{I}_7$ pseudofiber and five $\mathrm{I}_1$ pseudofibers. \end{example}

\begin{figure}[ht]
\centering
\includegraphics[scale=.85]{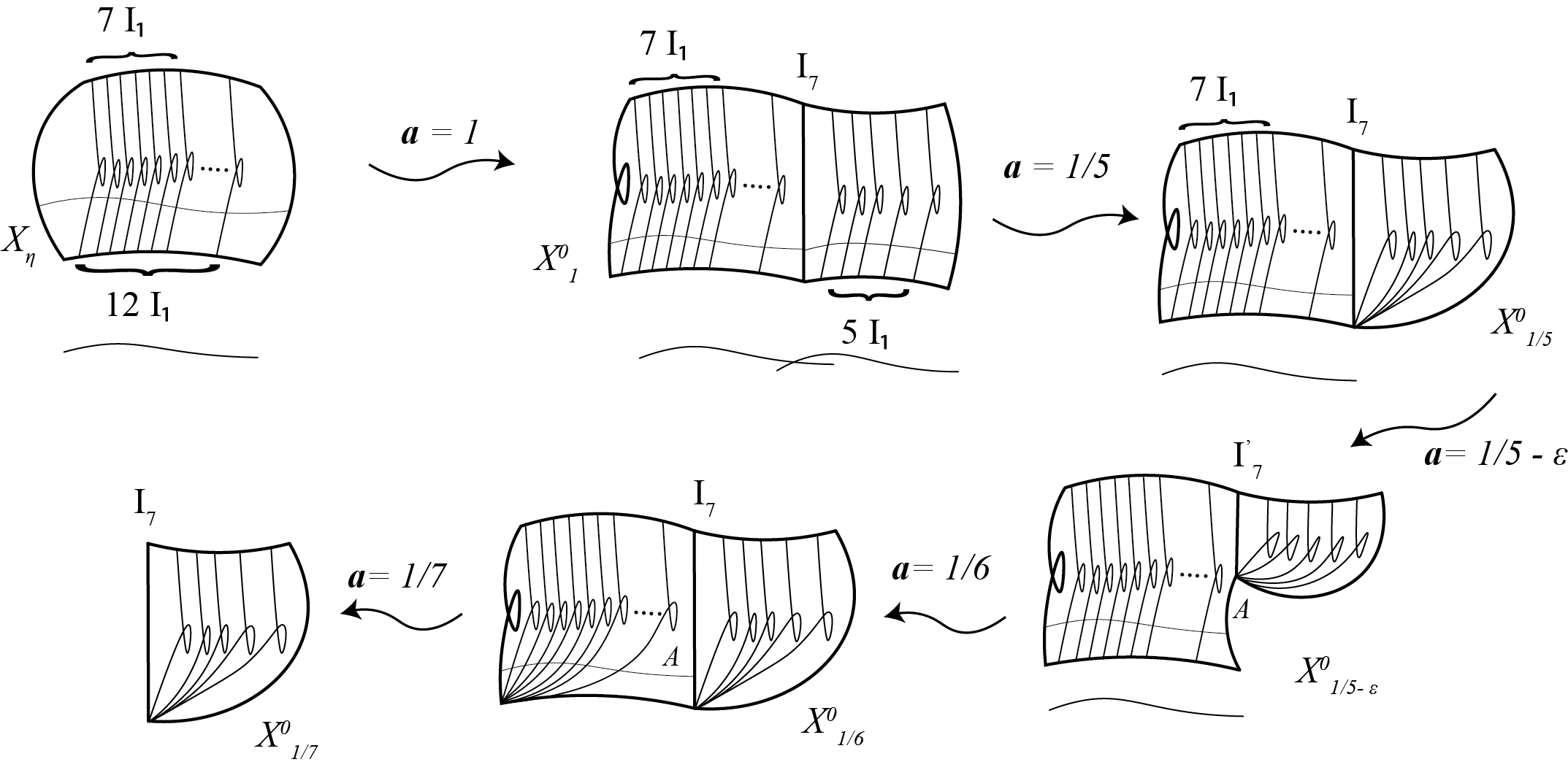}
\caption{Example \ref{ex:in} illustrating Theorem \ref{thm:jinfwalls}.}\label{fig:ex1}
\end{figure}

\begin{remark} The above example occurs when $6 \leq k \leq 9$ nodal fibers collide, and the final wall is at $1/k$. Note that for numerical reasons, we cannot have $10 \leq n \leq 12$ nodal fibers collide (see \cite{persson}). \end{remark}

\begin{example}[See Figure \ref{fig:ex2}]\label{ex:jinfinity} Suppose $X^0_1$ is the twisted stable maps limit of a family $\mathscr{X} \to B$ and $X^0_{1} = Y \cup_{\mathrm{I}_0^*} Z$, where there are six marked fibers on both irreducible components, and there is a $\mathbb{Z}/2\mathbb{Z}$ stabilizer at node of the stable base curve corresponding to the double locus of $X^0_1$. Suppose further that $j(\mathrm{I}_0^*) = \infty$. Then the marked $j$-invariant infinity fibers on $Y$ or $Z$ can collide into the double locus and we obtain new isotrivial components of $j$-invariant infinity.

If $1 \leq n \leq 5$ marked fibers on $Y$ collide onto the double locus, the stable limit will be a new surface $Y' \cup W \cup Z$, where each component has $(6-n)$, $n$ and $6$ marked fibers respectively, $Y'$ now has an $\mathrm{I}_n^*$ fiber where the markings collided, and the component $W$ is isotrivial of $j$-invariant infinity. By examining the stabalizer of the twisted stable map at the nodes, we see that $W$ has an $\mathrm{N}_1$ fiber glued to the $\mathrm{I}_0^*$ fiber of $Z$ and another $\mathrm{N}_1$ fiber glued to the $I_n^*$ fiber of $Y'$. When the the coefficients decrease past a Type $\wii$ wall $a = 1/(6-n)$, $Y'$ undergoes a pseudoelliptic flip explaining how isotrivial main components appear in Proposition \ref{prop:typeii6}. 

\end{example}

\begin{figure}[ht]
\centering
\includegraphics{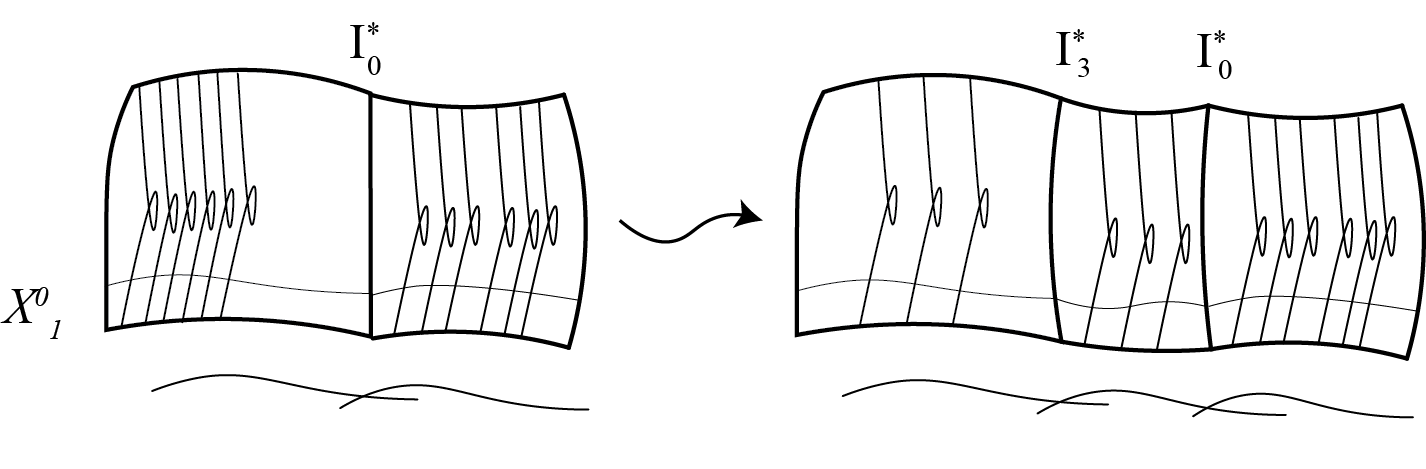}
\caption{Example \ref{ex:jinfinity} with $3$ $\mathrm{I}_1$ fibers colliding into the $\mathrm{I}_0^*/\mathrm{I}_0^*$ double locus.}\label{fig:ex2}
\end{figure}

\begin{remark}\label{rmk:jinfinity} We note that in the above example there can be  a chain of isotrivial $j$-invariant infinity surfaces sandwiched between the two non-isotrivial surfaces if $\mathrm{I}_n$ type fibers collide into the double locus from multiple sides. However, each end surface must have at least $2$ singular marked fibers (counted with multiplicity) and each isotrivial surface must have at least $1$ and so the maximum length of such a chain is $8$.  \end{remark}

\section{Background on $\Q$-Gorenstein Deformations}\label{sec:hacking}
We follow Hacking \cite[Section 3]{hacking}. Let $p \in X$ be the germ of an slc surface. The canonical covering $\pi: Z \to X$ is defined by 
$$Z := \underline{\Spec}_X (\calO_X \oplus \calO_X(K_X) \oplus \dots \oplus \calO_X((N-1)K_X)),$$
where $N$ is the index of $P \in X$ and the multiplication is given by choosing an isomorphism $\calO_X(NK_X) \to \calO_X$. The morphism $\pi: Z \to X$ is a cyclic quotient of degree $N$, and the surface $Z$ is Gorenstein.

\begin{definition} Let $p \in X$ be an slc surface germ. Let $N$ be the index of $X$ and let $Z \to X$ be the canonical covering (a $\mu_N$ quotient). A deformation $\mathcal X / (0 \in S)$ of $X$ is \textbf{$\bQ$-Gorenstein} if there is a $\mu_N$-equivariant deformation $\mathcal{Z}/S$ of $Z$ whose quotient is $\mathcal X / S$. \end{definition}

For such $X$, the canonical covering at a point $p \in X$ is uniquely determined in the \'etale topology, and therefore this defines a Deligne-Mumford stack $\mathscr{X}$ with coarse moduli space $X$, the canonical covering stack of $X$. 

\begin{lemma}\cite[Lemma 3.5]{hacking} Let $p \in X$ be an slc surface germ of index $N$ and $Z \to X$ the canonical covering with group $\mu_N$. Let $\mathcal Z / (0 \in S)$ be a $\mu_N$-equivariant deformation of $Z$ inducing a $\bQ$-Gorenstein deformation $\mathcal X / (0 \in S)$ of $X$. There is an isomorphism $$\mathcal Z \cong \underline{\Spec}_{\mathcal X} (\calO_{\mathcal X} \oplus \omega_{\mathcal X / S} \oplus \dots \oplus \omega_{\mathcal X / S}^{[N-1]}),$$ with multiplication given by a fixing a trivialization of $\omega_{\mathcal X / S}^{[N]}$. In particular, $\mathcal Z / S$ is determined by $\mathcal X / S$. \end{lemma}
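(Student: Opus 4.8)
The plan is to run the index-one cover construction in families and check that it is compatible with $\bQ$-Gorenstein deformation; concretely, I would recover the $\calO_{\mathcal X}$-algebra $\pi_*\calO_{\mathcal Z}$ as $\bigoplus_{i=0}^{N-1}\omega_{\mathcal X/S}^{[i]}$, where $\pi\colon\mathcal Z\to\mathcal X$ is the quotient map. Since $\pi$ is finite of degree $N$, and working (as one may) over a base containing the $N$-th roots of unity, the $\mu_N$-action splits $\pi_*\calO_{\mathcal Z}=\bigoplus_{i=0}^{N-1}\mathcal L_i$ into $\calO_{\mathcal X}$-modules with $\mathcal L_0=\calO_{\mathcal X}$ and with algebra multiplication $\mathcal L_i\otimes\mathcal L_j\to\mathcal L_{i+j\bmod N}$. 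First I would record the structural facts: each $\mathcal L_i$ is flat over $S$ (a summand of the $S$-flat sheaf $\pi_*\calO_{\mathcal Z}$) with $S_2$ fibers (the fibers of $\mathcal Z/S$ are Gorenstein, $\pi$ is finite, and summands of $S_2$ sheaves are $S_2$), hence $\mathcal L_i$ is an $S_2$ sheaf on $\mathcal X$ (taking $S$ itself $S_2$, as we may); and since forming eigensheaves commutes with base change and $\pi$ is affine, $\mathcal L_i|_X$ is the $\chi^i$-eigensheaf of $\pi_{0*}\calO_Z$, which is $\calO_X(iK_X)=\omega_X^{[i]}$ by the definition of the canonical covering. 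Thus $\mathcal L_i$ and $\omega_{\mathcal X/S}^{[i]}$ are both $S_2$ sheaves on $\mathcal X$ restricting to $\omega_X^{[i]}$ on the central fiber. Letting $\mathcal X^\circ\subset\mathcal X$ be the locus where $\omega_{\mathcal X/S}$ is invertible, its complement is the non-Gorenstein locus of $\mathcal X$, which is quasi-finite over $S$ (finitely many points in each slc-surface fiber) and so of codimension $\ge 2$ in $\mathcal X$ and in each fiber; hence both sheaves are recovered from $\mathcal X^\circ$ by push-forward, and it will suffice to produce a $\mu_N$-equivariant isomorphism of $\calO_{\mathcal X}$-algebras over $\mathcal X^\circ$.

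Over $\mathcal X^\circ$ the cover $\pi$ is étale, since the ramification locus of the canonical covering coincides with the non-Gorenstein locus (étale descent of the Gorenstein property gives that the non-Gorenstein locus is contained in the ramification locus, and the classical local description of the cover gives the reverse inclusion). Hence over $\mathcal X^\circ$ each $\mathcal L_i$ is invertible, $\mathcal L_i|_{\mathcal X^\circ}\cong(\mathcal L_1|_{\mathcal X^\circ})^{\otimes i}$, and $\pi^\circ$ is the étale cyclic cover classified by the line bundle $\mathcal P:=\mathcal L_1|_{\mathcal X^\circ}$ together with a trivialization of $\mathcal P^{\otimes N}$. The key point I would then prove is $\mathcal P\cong\omega_{\mathcal X^\circ/S}$. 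First, $\omega_{\mathcal X/S}^{[N]}\cong\calO_{\mathcal X}$: the sheaf $\omega_{\mathcal Z/S}$ is an invertible $\mu_N$-linearized sheaf on the local scheme $\mathcal Z$, hence $\mu_N$-equivariantly isomorphic to $\calO_{\mathcal Z}$ with $\mu_N$ acting on a generator by a character whose $N$-th power is trivial; restricting to $\mathcal Z^\circ$, where $\omega_{\mathcal Z^\circ/S}^{\otimes N}=\pi^{\circ*}(\omega_{\mathcal X^\circ/S}^{\otimes N})$ by étaleness, and taking $\mu_N$-invariants of the push-forward, gives $\omega_{\mathcal X^\circ/S}^{\otimes N}\cong\calO_{\mathcal X^\circ}$, and therefore $\omega_{\mathcal X/S}^{[N]}\cong\calO_{\mathcal X}$ by reflexivity. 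So $\mathcal P$ and $\omega_{\mathcal X^\circ/S}$ are both $N$-torsion in $\mathrm{Pic}(\mathcal X^\circ)$, and both restrict to $\omega_{X^\circ}$ on the central fiber (for $\mathcal P$ this is $\mathcal L_1|_X=\calO_X(K_X)$ restricted to $X^\circ$). Since the kernel of $\mathrm{Pic}(\mathcal X^\circ)\to\mathrm{Pic}(X^\circ)$ is torsion-free — after reducing to $S$ Artinian it is, by the deformation theory of line bundles, an iterated extension of the characteristic-zero vector spaces $H^1(X^\circ,\calO_{X^\circ})\otimes\mathfrak m^j/\mathfrak m^{j+1}$ — the two bundles agree, so $\mathcal P\cong\omega_{\mathcal X^\circ/S}$ and hence $\mathcal L_i|_{\mathcal X^\circ}\cong\omega_{\mathcal X/S}^{[i]}|_{\mathcal X^\circ}$.

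Finally I would push forward from $\mathcal X^\circ$ to obtain $\mathcal L_i\cong\omega_{\mathcal X/S}^{[i]}$ on all of $\mathcal X$, and observe that under these isomorphisms the multiplication maps $\mathcal L_i\otimes\mathcal L_j\to\mathcal L_{i+j\bmod N}$ correspond, over $\mathcal X^\circ$ and hence by reflexivity over $\mathcal X$, to the natural pairings $\omega_{\mathcal X/S}^{[i]}\otimes\omega_{\mathcal X/S}^{[j]}\to\omega_{\mathcal X/S}^{[i+j]}$, where for $i+j\ge N$ one uses a trivialization of $\omega_{\mathcal X/S}^{[N]}\cong\calO_{\mathcal X}$ (two such differ by a unit, giving isomorphic algebras). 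This identifies $\mathcal Z=\underline{\Spec}_{\mathcal X}(\pi_*\calO_{\mathcal Z})$ with $\underline{\Spec}_{\mathcal X}(\calO_{\mathcal X}\oplus\omega_{\mathcal X/S}\oplus\cdots\oplus\omega_{\mathcal X/S}^{[N-1]})$ with the stated multiplication, and since the right-hand side is built from $\mathcal X/S$ alone, $\mathcal Z/S$ is determined by $\mathcal X/S$.

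The hard part will be the identification $\mathcal P\cong\omega_{\mathcal X^\circ/S}$ — that is, ruling out a spurious line-bundle twist in the eigensheaf decomposition as one passes to the deformation. This is precisely where the $\bQ$-Gorenstein hypothesis is essential: Gorenstein-ness of $\mathcal Z$ is what simultaneously makes $\pi$ étale in relative codimension one and makes $\omega_{\mathcal X/S}^{[N]}$ trivial, and once those are in hand the comparison reduces to the torsion-freeness of $\ker(\mathrm{Pic}(\mathcal X^\circ)\to\mathrm{Pic}(X^\circ))$ in characteristic zero, together with enough relative-$S_2$ bookkeeping to guarantee that everything is controlled away from a set of relative codimension two.
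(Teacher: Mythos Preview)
The paper does not supply its own proof of this lemma; it is quoted verbatim from \cite[Lemma~3.5]{hacking} as background for Section~\ref{sec:hacking}, so there is no in-paper argument to compare against. Your proposal is a correct and essentially standard reconstruction of the index-one cover argument in families: the eigensheaf decomposition of $\pi_*\calO_{\mathcal Z}$, the $S_2$/reflexivity bookkeeping reducing to the Gorenstein locus $\mathcal X^\circ$, and the identification $\mathcal L_1|_{\mathcal X^\circ}\cong\omega_{\mathcal X^\circ/S}$ via torsion-freeness of $\ker(\mathrm{Pic}(\mathcal X^\circ)\to\mathrm{Pic}(X^\circ))$ in characteristic zero are exactly the expected ingredients, and you have correctly isolated the one genuinely nontrivial step.
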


Let $A$ be $\mathbb{C}$-algebra and $M$ a finite $A$-module. For $\mathcal X / A$ a flat family of schemes over $A$, let $L_{\mathcal X / A}$ denote the cotangent complex of $\mathcal X / A$. Define 
\begin{align*} T^i(\mathcal X / A, M) &= \mathrm{Ext}^i(L_{\mathcal X / A}, \calO_{\mathcal X} \otimes_A M)\\
\mathscr T^i(\mathcal X/A, M) &= \mathcal E xt^i(L_{\mathcal X/A}, \calO_{\mathcal X} \otimes_A M)\end{align*}

For $\mathcal X / A$ a $\bQ$-Gorenstein family of slc surfaces over $A$, let $\mathscr{X} / A$ denote the canonical covering stack of $\mathcal X / A$, and let $p: \mathscr{X} \to \mathcal X$ be the induced map. Define \begin{align*} T^i_{QG} (\mathcal X / A, M) &= \mathrm{Ext}^i(L_{\mathscr{X} / A}, \calO_{\mathscr{X}} \otimes_A M)\\  \mathscr T^i_{QG} (\mathcal X / A, M) &= p_*\mathcal E xt^i (L_{\mathscr{X} / A}, \calO_{\mathscr{X}} \otimes_A M). \end{align*}
For $X/\mathbb{C}$ define $T^i_X,\mathscr T^i_X, T^i_{QG,X}, \mathscr T^i_{QG,X}$ by $T^i_X:= T^i(X/\mathbb{C}, \mathbb{C})$ and so on. 

If $A' \to A$ is an infinitesimal extension of $A'$, then isomorphism classes of $\bQ$-Gorenstein deformations of $\mathcal X / A$ over $A'$ are in bijection with the set of isomorphism classes of deformations of $\mathscr{X} / A$ over $A'$ \cite[Proposition 3.7]{hacking}.

Furthermore, letting $A = M = \mathbb{C}$, we see that first order $\bQ$-Gorenstein deformations of $X/\mathbb{C}$ are identified with $T^1_{QG, X}$ and the obstructions to extending $\bQ$-Gorenstein deformations lie in $T^2_{QG, X}$. Furthermore, $\mathscr T^0_{QG, X} = \mathscr T^0_X = \mathcal H om(\Omega_X, \calO_X)$, the tangent sheaf of $X$ \cite[Lemma 3.8]{hacking}. If $p \in X$ and $\pi: Z \to X$ is the canonical covering, then $\mathscr T^i_{QG, X} = (\pi_* \mathscr T^i_Z)^{\mu_N}$. The sheaf $\mathscr T^1_Z$ is supported on the singular locus of $Z$ and $\mathscr T^2_Z$ is supported on the locus where $Z$ is not a local complete intersection. Finally, there is a spectral sequence $$E^{pq}_2 = H^p(\mathscr T^q_{QG,X}) \implies T^{p+q}_{QG, X}$$ given by the spectral sequence for Ext on the canonical covering stack of $X$. 

We now discuss the generalizations of these definitions in the pairs settings (see \cite[Section 3.3]{hacking}).

\begin{definition}\label{def:qgorensteinpair} Let $p \in (X,D)$ be a germ of a stable pair. Let $N$ be the index of $X$ and let $Z \to X$ be the canonical covering. Let $D_Z$ denote the inverse image of $D$. We say a deformation $(\mathcal X, \mathcal D) / (0 \in S)$ of $(X,D)$ is \textbf{$\bQ$-Gorenstein} if there is a $\mu_N$-equivariant deformation $(\mathcal Z, \mathcal D_{\mathcal Z}) / S$ of $(Z,D_Z)$ whose quotient is $(\mathcal X, \mathcal D)/S$. \end{definition}

By Lemma \ref{lemma:gorenstein}, it will follow that our families of pairs  of interest will be $\bQ$-Gorenstein deformations (see \cite[Lemma 3.13]{hacking}). The following theorem will be the main tool we use to show smoothness in the following section.

\begin{theorem}\cite[Theorem 3.12 \& Lemma 3.14]{hacking}\label{thm:hackingunobstruct} Let $(\calX, \calD)/A$ be a $\bQ$-Gorenstein family of stable pairs such that for each closed fiber $(X,D)$, $-K_X$ is ample. Let $A' \to A$ be an infinitesimal extension and $\calX' / A'$ a $\bQ$-Gorenstein deformation of $\calX / A$. Then there exists a $\bQ$-Gorenstein deformation $(\calX', \calD')/A'$ of $(\calX, \calD) / A$ if $H^1(X, \calO_X(D)) = 0$. \end{theorem}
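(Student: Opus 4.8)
\subsection*{Proof proposal}

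The plan is to reduce the existence of $(\calX',\calD')$ to a lifting problem for the divisor alone on the canonical covering stack, and then to solve that lifting problem with a two-step obstruction calculation whose obstruction groups are exactly $H^2(X,\calO_X)$ and $H^1(X,\calO_X(D))$. First, since $\calX'/A'$ is a $\bQ$-Gorenstein deformation of $\calX/A$, by \cite[Prop.~3.7]{hacking} it arises from a deformation of the canonical covering stack: there is a flat family $\mathscr X'/A'$ of Deligne--Mumford stacks restricting to the canonical covering stack $\mathscr X/A$ over $A$ and with coarse space $\calX'$. Writing $\pi\colon \mathscr X\to\calX$ for the coarse space map and $\mathscr D=\pi^{-1}D$, the definition of a $\bQ$-Gorenstein deformation of a pair (Definition \ref{def:qgorensteinpair}) unwinds to the statement that giving $(\calX',\calD')$ is the same as giving a closed substack $\mathscr D'\subset\mathscr X'$, flat over $A'$, restricting to $\mathscr D$ over $A$, after which one takes $\calD'$ to be its image in $\calX'$. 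So it suffices to extend the divisor $\mathscr D$ across the infinitesimal extension $A'\to A$ inside $\mathscr X'$.

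I would carry this out in two steps. On the covering stack the divisorial sheaf $\calL:=\calO_{\mathscr X}(\mathscr D)$ is invertible --- this is precisely the point of passing to the canonical cover, and in our situation it is guaranteed by Lemma \ref{lemma:gorenstein}, which ensures our families are $\bQ$-Gorenstein --- so $\mathscr D$ is the zero scheme of a section $s$ of $\calL$. Let $J=\ker(A'\to A)$, a finite-dimensional $\C$-vector space, and reduce to the square-zero case. Step (i): lift $\calL$ to a line bundle $\calL'$ on $\mathscr X'$; the obstruction lies in $H^2(\mathscr X,\calO_{\mathscr X})\otimes J$, which via $\pi_*$ and taking $\mu_N$-invariants is $H^2(X,\calO_X)\otimes J$. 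Step (ii): having fixed $\calL'$, lift $s$ to a section $s'$ of $\calL'$; the obstruction lies in $H^1(\mathscr X,\calL)\otimes J$, which by the same invariant-pushforward bookkeeping is $H^1(X,\calO_X(D))\otimes J$. Once $s'$ exists, its zero scheme $\mathscr D':=Z(s')$ is automatically flat over $A'$ --- the local criterion of flatness applies since $s$ is a nonzerodivisor on the central fiber, as $D$ contains no component of $X$ --- and it restricts to $\mathscr D$.

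Both obstruction groups vanish. The group $H^1(X,\calO_X(D))$ vanishes by hypothesis. For $H^2(X,\calO_X)$: an slc surface is $S_2$, hence Cohen--Macaulay, so Serre duality gives $H^2(X,\calO_X)\cong H^0(X,\omega_X)^\vee$, and since $-K_X$ is ample $\omega_X$ cannot be effective, so $H^0(X,\omega_X)=0$ and $H^2(X,\calO_X)=0$. Hence $\calL$ lifts, then $s$ lifts, and pushing $\mathscr D'=Z(s')$ forward produces the desired $\bQ$-Gorenstein deformation $(\calX',\calD')/A'$ of $(\calX,\calD)/A$.

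I expect the genuinely delicate part to be the first two steps rather than the cohomological vanishing: precisely matching Definition \ref{def:qgorensteinpair} with the statement ``lift the divisor $\mathscr D$ equivariantly on the canonical cover,'' and --- if $D$ fails to be Cartier even after passing to the cover --- replacing the clean ``line bundle plus section'' obstruction theory by the deformation theory of $\mathscr D$ as a closed substack, identifying the relevant obstruction with a class in $H^1$ of the normal sheaf $\calN_{\mathscr D/\mathscr X}$ and relating that to $H^1(X,\calO_X(D))$ via the conormal sequence together with $H^2(X,\calO_X)=0$. This is exactly the content of \cite[Theorem 3.12 and Lemma 3.14]{hacking}, whose argument I would follow for the remaining details.
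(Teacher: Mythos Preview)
The paper does not supply its own proof of this theorem: it is stated as a direct citation of \cite[Theorem 3.12 \& Lemma 3.14]{hacking} and is used as a black box in the subsequent arguments. So there is no ``paper's proof'' to compare against, only Hacking's original.

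Your outline is a faithful sketch of Hacking's argument: pass to the canonical covering stack, reduce extending the pair to lifting the divisor, and break that into lifting the line bundle (obstruction in $H^2(X,\calO_X)$, killed by $-K_X$ ample via Serre duality) and then lifting the section (obstruction in $H^1(X,\calO_X(D))$, killed by hypothesis). You correctly flag the one genuine subtlety in your final paragraph: the canonical covering stack is engineered to make $K_X$ Cartier, not $D$, so the clean ``line bundle plus section'' picture is only available when $D$ is tied to $K_X$ (as it is in Hacking's setting and in this paper's applications, where $12K_X+D\sim 0$). In the general case one has to run the obstruction calculation through the embedded deformation theory of $\mathscr D\subset\mathscr X$ instead, as you note. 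Since you explicitly defer to Hacking for that, and since the paper itself does the same, there is nothing further to compare.
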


In particular, if one can show that $\bQ$-Gorenstein deformations of $X$ are unobstructed, those of the pair $(X,D)$ are unobstructed as long as $-K_X$ is ample and $H^1(X, \calO_X(D)) = 0$. As a result, the moduli stack parametrizing $\bQ$-Gorenstein deformations of pairs $(X,D)$ will be smooth.

\section{Moduli of del Pezzo surfaces of degree one}\label{sec:dp}

We begin by defining a stable pairs compactification $\mathcal{DP}^1$ of a space of marked degree one del Pezzo surfaces following \cite{hacking}. Later on, we will see that the space $\mathcal{R}(1/12 + \epsilon)$ is a slice inside $\mathcal{DP}^1$, allowing us to apply the methods of \cite{hacking} to $\mathcal{R}(1/12 + \epsilon)$.

\begin{definition}(c.f. \cite[Definition 2.8]{hacking})\label{def:hstable} Let $X$ be a surface and $D$ a $\Q$-Cartier divisor on $X$. Then $(X,D)$ is a \textbf{Hacking stable}, or $H$-stable for short, degree one del Pezzo pair if:
\begin{enumerate}
\item $(X, \left(\frac{1}{12} + \epsilon\right)D)$ is slc and $K_X + (\frac{1}{12} + \epsilon)D$ is ample,
\item the divisor $12K_X + D$ is linearly equivalent to 0, and
\item there is a deformation $(\calX, \calD)/T$ of the pair $(X,D)$ over the germ of a curve such that the general fiber $\calX_t$ is isomorphic to a smooth del Pezzo surface of degree one and the divisors $K_{\calX}$ and $\calD$ are $\Q$-Cartier.
\end{enumerate}
\end{definition}

\begin{remark}A consequence of the above definition is that for an $H$-stable pair, the divisor $-K_X$ is ample (see \cite[Proposition 2.13]{hacking}). In particular, Definition \ref{def:hstable} is an slc generalization of a smooth degree one del Pezzo surface $X$ marked by $D$ the discriminant (weighted with multiplicity) of the anticanonical linear series $|-K_X|$. \end{remark}

We now turn to $\calR(1/12 + \epsilon)$, and show that pairs parametrized by $\calR(1/12 + \epsilon)$ are $H$-stable, so that $\calR(1/12 + \epsilon)$ embeds into $\mathcal{DP}^1$. First, we define a special locus inside $\calR(a)$. 

\begin{definition} Let $\calR^{\circ}(a)$ denote the locus inside $\calR(a)$ parametrizing surfaces without isotrivial $j$-invariant infinity components.  \end{definition}

\subsection{Stable degree one del Pezzo surfaces and $\calR(1/12 + \epsilon)$}\label{sec:a6}

In Section \ref{sec:walls} we computed the type $\mathrm{W}_{\mathrm{III}}$ walls for $\calR(a)$. We see these occur at $1/k$ for $2 \le k \le 9$ (see Theorem \ref{thm:jinfwalls}) and $a_0/k$ for $2 \le k \le 5$ where $a_0$ is a constant appearing in Theorem \ref{thm:transitions} depending on the Kodaira type of the intermediate fiber a Type I pseudoelliptic is attached to (see Theorem \ref{thm:type1walls}). Furthermore, recall the type $\mathrm{W}_{\mathrm{I}}$ walls where fibers become Weierstrass are at $5/6, 3/4, 2/3$, and $1/2$ and type $\mathrm{W}_{\mathrm{II}}$ agree with those of Hassett space (see Remark \ref{rmk:hassettwall2}).

In particular, when $a \leq 1/6$, all sections are contracted so that $\calR(a)$ is a moduli space of pseudoelliptic surfaces. Since the contraction of the section of a rational elliptic surface yields a degree one del Pezzo surface whose pseudofibers are anticanonical curves, we see the following:

\begin{lemma} Let $1/12 < a \le 1/6$. Then $\calR(a)$ is a compactification of a moduli space of degree one del Pezzo surfaces with canonical singularities and marked anticanonical curves. \end{lemma}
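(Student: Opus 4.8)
The plan is to exhibit an open immersion from the moduli space of smooth degree one del Pezzo surfaces into the interior of $\calR(a)$ for $1/12 < a \le 1/6$, and then to identify the locus parametrizing surfaces with at worst canonical (i.e. rational double point) singularities as an open dense substack. First I would recall the correspondence set up in Section~2: given a smooth degree one del Pezzo surface $X$ with anticanonical pencil $|-K_X|$, blowing up the unique basepoint produces a rational elliptic surface $(f : Y \to \bP^1, S)$ with $S$ the exceptional divisor, and conversely, by Kodaira's canonical bundle formula, contracting the section of such a $Y$ recovers $X$ together with the class $-K_X$ represented by the pseudofiber class. Since the discriminant $\mathscr{D}$ of $f$ is a degree $12$ divisor on $\bP^1$ and, for $X$ smooth, consists of $12$ distinct points (the $12$ $\mathrm{I}_1$ fibers), the datum of $(Y \to \bP^1, S + \tfrac{a}{1}\mathscr{D})$ with $\calA = (a,\dots,a)$ is exactly a point of the interior of $\calR(a) = \calE^s_{1,\calA}/S_{12}$. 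This gives a morphism from the coarse space of smooth degree one del Pezzo surfaces to $\calR(a)$.

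Next I would check that this morphism is an open immersion onto the locus of $\calR(a)$ parametrizing \emph{irreducible pseudoelliptic} surfaces whose associated elliptic surface has at worst rational double point singularities and only irreducible fibers (Weierstrass or $\mathrm{I}_n$ fibers): the inverse is given by taking the associated elliptic surface of Definition~\ref{def:pseudo1}, resolving the section, and contracting it — all operations that are canonically determined. The key point that the image lands in $\calR(a)$ (and not merely in $\calE_{1,\calA}$) is that for $a \le 1/6$ every section is contracted by the log canonical map, which was established in Proposition~\ref{prop:typeii6} and Corollary~\ref{cor:typeIR6}; so the surfaces in question really are pseudoelliptic. I would also note that since $a \le 1/6$ lies below all the $\wi$ walls at $1/2, 2/3, 3/4, 5/6$, the fibers of the elliptic model are indeed in Weierstrass (or $\mathrm{I}_n$) form, hence irreducible, which is what makes the pseudofiber class ample and $X$ a genuine degree one del Pezzo surface by the argument already given in Section~2. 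Finally, the marked divisor on $\calR(a)$ restricts over this locus to the image of $\mathscr{D}$, which is precisely the $a$-weighted discriminant of the anticanonical pencil, matching the assertion ``marked anticanonical curves.''

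For the statement that these del Pezzo surfaces have canonical singularities: a rational elliptic surface with at worst rational double points, after contracting the section, acquires at worst rational double points (the contraction of a $(-1)$-curve meeting the $\mathrm{ADE}$ configuration transversally, away from the singularities, introduces nothing new, and away from $S$ the surface is unchanged), and conversely the minimal resolution of a canonical degree one del Pezzo is a smooth rational elliptic surface after blowing up the base point of $|-K_X|$. So ``canonical singularities on $X$'' corresponds bijectively to ``rational double points on the elliptic model'', and this locus is nonempty, open, and dense in $\calR(a)$ (it contains the locus of smooth $X$), which shows $\calR(a)$ is a compactification of it.

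The main obstacle I anticipate is verifying that the morphism is actually an \emph{open immersion} of stacks rather than merely a bijection on points — one must match up automorphism groups and infinitesimal deformations on both sides. This requires knowing that a $\bQ$-Gorenstein deformation of the pseudoelliptic pair $(X, aD)$ corresponds to a deformation of the elliptic surface pair $(Y \to \bP^1, S + aF)$, i.e. that blowing up the basepoint and contracting the section are compatible with deformations; this is essentially the content of the later sections invoking Hacking's theory (Section~\ref{sec:hacking}) and the smoothness discussion, and on the interior — where everything is smooth and one is just relating $|-K_X|$-data to Weierstrass data $(A, B)$ of degrees $(4,6)$ on $\bP^1$ — it should follow from the classical fact (Miranda, Lemma IV.1.2 of \cite{mir3}) that rational elliptic surfaces are parametrized by such Weierstrass data modulo $\mathrm{PGL}_2$ and scaling, matching the classical parameter count for degree one del Pezzo surfaces. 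I would present the argument at the level of coarse spaces first and then remark that the stacky refinement follows from this deformation-theoretic compatibility, deferring the fully rigorous version to the smoothness analysis of $\calR^\circ$ later in the section.
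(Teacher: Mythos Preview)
Your proposal is correct and uses exactly the same ingredients as the paper: at $a \le 1/6$ all sections contract (Proposition~\ref{prop:typeii6}), and contracting the section of a rational Weierstrass fibration yields a degree one del Pezzo with canonical singularities whose pseudofibers are anticanonical curves (Section~2.2). The paper in fact treats this lemma as an immediate observation from the sentence preceding it and gives no further argument; your concerns about verifying an open immersion of stacks and deformation-theoretic compatibility are legitimate but go well beyond the level of rigor the paper supplies here.
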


We can be more precise about the marking on a del Pezzo surface on the interior of $\calR(a)$. Indeed if $(X,F_a)$ is a normal surface parametrized by a point of $\calR(a)$, then it is the blowdown of the section of a rational Weierstrass fibration. The boundary divisor consists of the singular fibers counted with multiplicity and weighted by $a$. Since each fiber of the Weierstrass fibration becomes an anticanonical curve upon blowing down the section,  we see that 
$$
F_a \sim 12af
$$
where $f \in |-K_X|$ is a pseudofiber class. We may conclude that $F_a \in |-\alpha K_X|$ with $1 < \alpha \le 2$. In particular, the necessarily ample log canonical divisor satisfies 
$$
K_X + F_a \sim_\mathbb{Q} -\delta K_X
$$
for $0 < \delta \le 1$. In particular $(X,F_a)$ can be though of as an anticanonically polarized degree one del Pezzo surface with at worst rational double point singularities.

We now characterize the two types of surfaces parametrized by the boundary of $\calR^0(a)$. 

\begin{theorem}\label{thm:dptypes} The surfaces parametrized by $\calR^{\circ}(a)$ for $1/12 < a \le 1/6$ are either:
\begin{enumerate}
\item normal degree one del Pezzo surfaces with canonical Gorenstein singularities and all singular pseudofibers being Weierstrass of type  $\mathrm{I}_n, \mathrm{II}, \mathrm{III}$ or $\mathrm{IV}$, or
\item the slc union of two degree one del Pezzo surfaces with canonical Gorenstein singularities glued along twisted $\mathrm{I}_0^*$ pseudofibers such that $2K_X$ is Cartier and all other singular pseudofibers as above. 
\end{enumerate}

In both cases, $K_X + D \sim_\bQ -\delta K_X$ for $0 < \delta \le 1$ so that $-K_X$ is ample and $(X,D)$ is an anticanonically polarized. We call case (1) surfaces Type A and case (2) surfaces Type B. 

\end{theorem}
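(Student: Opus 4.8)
The plan is to combine the structural classification of surfaces parametrized by $\calR^\circ(a)$ for $1/12 < a \le 1/6$, which is already provided by Corollary \ref{cor:typeIR6}, with the wall-crossing analysis of Section \ref{sec:walls} to rule out components in the pseudoelliptic trees. By Corollary \ref{cor:typeIR6}, since $\calR^\circ(a)$ excludes isotrivial $j$-invariant infinity components, the surfaces parametrized fall into exactly two families: (1) an irreducible pseudoelliptic main component with Type I pseudoelliptic trees attached along $\mathrm{II}, \mathrm{III}, \mathrm{IV}$ or $\mathrm{N}_1$ pseudofibers, and (2) a union of two Type II pseudoelliptics glued along twisted $\mathrm{I}_0^*/\mathrm{I}_0^*$, $\mathrm{N}_1/\mathrm{I}_0^*$ or $\mathrm{N}_1/\mathrm{N}_1$ pseudofibers, again with Type I trees attached. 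The first step is to observe that $\mathrm{N}_1$ fibers and $\mathrm{I}_0^*$ fibers are precisely the isotrivial $j = \infty$ fiber types (by Definition \ref{def:nk} and the discussion preceding Theorem \ref{thm:transitions}), so working inside $\calR^\circ(a)$ eliminates the $\mathrm{N}_1$ gluings and the $\mathrm{N}_1$-attachments, leaving only $\mathrm{I}_0^*/\mathrm{I}_0^*$ gluings in case (2) and $\mathrm{II}, \mathrm{III}, \mathrm{IV}$ attachments of Type I trees in both cases.

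Next I would show that for $a$ in the lowest chamber $1/12 < a < 1/10$, all Type I pseudoelliptic components have already contracted to points, so that the surfaces parametrized by $\calR^\circ(a)$ are genuinely normal del Pezzos (Type A) or unions of two normal del Pezzos (Type B), with no residual pseudoelliptic trees. This uses Theorem \ref{thm:type1walls}: the Type $\wiii$ walls where a Type I pseudoelliptic attached along a type $\mathrm{II}, \mathrm{III}, \mathrm{IV}$ fiber contracts occur at $a = a_0/k$ with $a_0 \in \{5/6, 3/4, 2/3\}$ and $2 \le k \le 5$; the smallest such value is $2/3 \cdot 1/5 = 2/15 > 1/10$, so below $1/10$ every Type I component has contracted. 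When a Type I pseudoelliptic glued along an intermediate $\mathrm{II}, \mathrm{III}, \mathrm{IV}$ fiber contracts to a point, Proposition \ref{prop:typeIII} and Corollary \ref{cor:typeIII} tell us the resulting fiber becomes a minimal Weierstrass cusp of the corresponding type, so the main component acquires a genuine Weierstrass $\mathrm{II}, \mathrm{III}$ or $\mathrm{IV}$ pseudofiber; together with the $\mathrm{I}_n$ fibers already present on the (Weierstrass model) main component, this gives exactly the list of allowed singular pseudofibers in the statement. For general $1/12 < a \le 1/6$ the same classification holds because $\calR(a)$ for $a$ in this range differs from $\calR(1/12+\epsilon)$ only by the Type $\wiii$ contractions of Type I trees, which only simplify the surfaces; alternatively one restates the theorem for the lowest chamber and notes $\calR^\circ(a)$ for larger $a \le 1/6$ maps to it.

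The third step is to verify the stated singularity and polarization properties. For Type A: the main component is the blowdown of the section of a rational elliptic surface whose fiber types are among $\mathrm{I}_n, \mathrm{II}, \mathrm{III}, \mathrm{IV}$ (all the non-starred Kodaira types with $j \ne \infty$ plus the nodal $\mathrm{I}_n$), and since these are all Gorenstein canonical singular fibers, the pseudoelliptic surface has canonical Gorenstein singularities; ampleness of $-K_X$ and the relation $K_X + D \sim_\bQ -\delta K_X$ with $0 < \delta \le 1$ follow exactly as in the normal interior case computed just before Theorem \ref{thm:dptypes}, using $D = F_a \sim 12af$ and $f \in |-K_X|$. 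For Type B: each of the two components is a normal degree one del Pezzo with Gorenstein canonical singularities by the same argument applied to the two six-pointed rational elliptic surfaces, and the gluing along the twisted $\mathrm{I}_0^*$ pseudofiber is the one case where $2K_X$ rather than $K_X$ is Cartier — here I would cite the local computation of the twisted $\mathrm{I}_0^*$ fiber in \cite{calculations} showing the surface has a $\bZ/2\bZ$-quotient singularity along the double locus making it $2$-Gorenstein, and check that $K_X + D$ restricts to an ample $\bQ$-divisor on each component proportional to $-K_{X_i}$, so that $-K_X$ is ample on the glued surface. The main obstacle will be the Type B case: one must carefully argue that a non-normal gluing of two del Pezzos along anticanonical $\mathrm{I}_0^*$ curves produces an slc (not just semi-smooth) surface with $2K_X$ Cartier and ample $-K_X$, which requires checking the slc condition of Definition \ref{def:slc} at the double locus — namely that the normalization pair $(X^\nu, D^\nu + \nu_*^{-1}D)$ is log canonical — and confirming the index is exactly $2$; this is precisely the content of the earlier fiber-type analysis in \cite{calculations, master} and so should reduce to invoking those results, but assembling it coherently is where the real work lies.
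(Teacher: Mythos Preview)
Your overall strategy matches the paper's, but there is a genuine gap in Step~2 that prevents you from proving the theorem on the full range $1/12 < a \le 1/6$.

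The issue is your claim that the smallest Type~$\wiii$ wall for Type~I pseudoelliptics in $\calR^\circ$ is $2/3 \cdot 1/5 = 2/15$. You are treating the number of marked fibers $k$ on the pseudoelliptic and the type of the attaching fiber as independent parameters, but they are not: they are linked by the constraint that the total discriminant on any rational (pseudo)elliptic component is $12$ (Remark~\ref{rmk:singfibers}). A Type~I pseudoelliptic $Z$ attached along an intermediate $\mathrm{II}$ (resp.\ $\mathrm{III}$, $\mathrm{IV}$) fiber of the main component carries a $\mathrm{II}^*$ (resp.\ $\mathrm{III}^*$, $\mathrm{IV}^*$) fiber on its own side by the balancing condition, with discriminant $10$ (resp.\ $9$, $8$); hence $Z$ has exactly $k = 2$ (resp.\ $3$, $4$) marked fibers, and the $\wiii$ wall is at $a_0/k = 5/12$ (resp.\ $1/4$, $1/6$). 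So in $\calR^\circ$ every Type~I pseudoelliptic has already contracted by $a = 1/6$, exactly as the paper asserts. Your computed value $2/15$ corresponds to a nonexistent configuration ($k = 5$ with a $\mathrm{IV}^*$ attaching fiber). Without this observation, your argument only covers the lowest chamber, and your proposed extension (``Type~I trees only simplify the surfaces'') goes the wrong direction: if such trees were present for some $a \in (2/15, 1/6]$, the surfaces there would \emph{not} be of the form described in the theorem.

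A secondary error: you assert that ``$\mathrm{N}_1$ fibers and $\mathrm{I}_0^*$ fibers are precisely the isotrivial $j = \infty$ fiber types.'' This is false for $\mathrm{I}_0^*$, which can have any $j$-invariant. The correct reason the $\mathrm{N}_1/\mathrm{I}_0^*$ and $\mathrm{N}_1/\mathrm{N}_1$ gluings disappear in $\calR^\circ$ is simply that an $\mathrm{N}_1$ fiber, by Definition~\ref{def:nk}, lives on a surface with nodal generic fiber, hence on an isotrivial $j = \infty$ component; the $\mathrm{I}_0^*/\mathrm{I}_0^*$ gluing survives precisely because $\mathrm{I}_0^*$ need not have $j = \infty$. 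Your conclusion here is right, but the stated reason is not.
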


\begin{proof} It follows from Corollary \ref{cor:nocomponents} that the surfaces parametrized by $\calR(1/6 +e)$ have at most two elliptic components. Since $a \le 1/6$ the section of every component contracts. Suppose an elliptic fibration $(X \to C, F_\calA)$ in $\calR^0(1/6 + e)$ has only one elliptic component, possibly with pseudoelliptic components of Type I attached to it. The base rational curve marked by the $(1/6 + \epsilon)$-weighted discriminant $(1/6 + \epsilon)\mathscr{D}$ is an irreducible Hassett stable curve. In particular, the order of vanishing $v_q(\mathscr{D}) \le 5$ for every $q \in C$. Any unstable fiber on the elliptic component is type $\mathrm{II}, \mathrm{III}$ or $\mathrm{IV}$. In particular, any type $I$ pseudoelliptic tree is attached along an intermediate $\mathrm{II}, \mathrm{III}$ or $\mathrm{IV}$ fiber. 

By Theorem \ref{thm:type1walls}, any such pseudoelliptic surface is contracted to a point in the log canonical model for $a \le 1/6$ so every surface in $\calR^0(a)$ arising from a surface in $\calR^0(1/6 + \epsilon)$ with a unique elliptic component is irreducible. Moreover the contraction of the pseudoelliptic components yields singularities of ADE Type By \cite[Page 230]{calculations} since such contractions produce minimal Weierstrass models. The fact that they are del Pezzo surfaces, in the sense that $-K_X$ is ample, follows from calculation preceding the theorem as we saw that $K_X + F_a \sim_{\bQ} -\delta K_X$, which is ample. This gives case (1).  

 Now we discuss Case (2). We saw in Proposition \ref{prop:typeii6} that the only way to obtain multiple elliptic components in $\calR^0(1/6 + \epsilon)$ is if there are two components each with six marked fibers glued along $\mathrm{I_0}^*$ fibers. Again by considering stability of the base Hassett curve, we see that $v_q(\mathscr{D}) \le 5$ so any type $I$ pseudoelliptic trees attached to these components contract to a point by $a = 1/6$. Furthermore the section of each component contracts so we obtain two pseudoelliptic surfaces of Type II glued along twisted $\mathrm{I}_0^*$ fibers but with all other fibers Weierstrass. In particular each component again has only ADE singularities, a single twisted $\mathrm{I}_0^*$ pseudofiber, and else all Weierstrass pseudofibers of types $\mathrm{I}_n, \mathrm{II}, \mathrm{III}$ and $\mathrm{IV}$. Let $(X,F_a,E)$ be such a component with markings $F_a$ and double locus $E$ marked by $1$. Then $F_a$ consists of $6$ (counted with multiplicity) pseudofibers weighted by $a$ and $E$ is a reduced pseudofiber underlying a twisted $\mathrm{I}_0^*$ pseudofiber. Thus as  before we may compute
 $$
 F_a + E \sim_\bQ -(6a + 1/2)K_X < -K_X
 $$
 with $1/12 < a \le 1/6$. Thus $K_X + F_a + E$, the log canonical restricted to each component, satisfies
 $$
 K_X + F_a + E \sim_\bQ -\delta K_X
 $$
 for $\delta > 0$. In particular $-K_X$ is ample and $K_X^2 = 1$ since $-K_X$ is the class of a pseudofiber so each component is a degree one del Pezzo surface. 
 \end{proof}

\begin{lemma}\label{lemma:gorenstein} In the setting above, surfaces of Type A are Gorenstein and surfaces of Type B are $\bQ$-Gorenstein of index 2. \end{lemma}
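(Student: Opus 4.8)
The plan is to prove the two claims separately, in each case reducing the question to a local computation of the singularities and using the description of the pseudofibers provided by Theorem \ref{thm:dptypes}. Recall that a surface is Gorenstein if and only if it is Cohen-Macaulay with Cartier canonical divisor; for our surfaces the $S2$ condition (part of being slc) together with the fact that surfaces are $S2$ iff Cohen-Macaulay handles the first half, so the content is entirely about the index of $K_X$.

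For surfaces of Type A, I would argue as follows. Such a surface $X$ is the contraction of the section of a rational Weierstrass elliptic fibration $(Y \to C, S)$ together with the contraction of any Type I pseudoelliptic trees, and by Theorem \ref{thm:dptypes} all of its singular pseudofibers are Weierstrass of type $\mathrm{I}_n$, $\mathrm{II}$, $\mathrm{III}$, or $\mathrm{IV}$. The singularities of $X$ are of two kinds: the rational double points coming from contracting $(-2)$-curves inside Weierstrass fibers (as noted just before the theorem, such contractions produce minimal Weierstrass models with ADE singularities), and the point obtained by contracting the section itself. ADE (canonical) surface singularities are Gorenstein, so it remains to check the point $p \in X$ lying under the contracted section. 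Since the section is a smooth rational $(-1)$-curve meeting the general fiber once, contracting it in the del Pezzo picture is exactly the inverse of blowing up the anticanonical basepoint; I would verify directly (e.g. by the adjunction/discrepancy computation, or by noting $X$ has a smooth point there when the elliptic fibration is generic and in general the germ is a cone over an anticanonical curve) that $K_X$ is Cartier at $p$. Since $12K_X + D \sim 0$ and more to the point $K_X$ is linearly equivalent to $-f$ with $f$ a Cartier pseudofiber class (the fibers of the original fibration descend to Cartier anticanonical divisors), $K_X$ is Cartier everywhere, hence $X$ is Gorenstein.

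For surfaces of Type B, the surface is $X = X_0 \cup_E X_1$ where each $X_i$ is as in Type A except that it carries one twisted $\mathrm{I}_0^*$ pseudofiber $E$ along which the gluing happens. Away from $E$ the analysis of Type A applies and $X$ is Gorenstein there. Along $E$, the key local fact is the structure of a twisted $\mathrm{I}_0^*$ fiber: on the elliptic surface before contraction, the $\mathrm{I}_0^*$ fiber in its twisted model is a non-reduced component $2E$ with a $\mathbb{Z}/2$ stabilizer (this is exactly the balancing/stabilizer data recorded in \S\ref{tsmconditions} and in Proposition \ref{prop:typeii6}), so $2K_{X_i}$ is Cartier near $E$ while $K_{X_i}$ is not. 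Gluing two such components along $E$ produces a surface that is $2$-Gorenstein (index $2$) along the double locus: one checks that $\omega_X^{[2]}$ is invertible near $E$ by passing to the canonical double cover, which unramifiedly separates the $\mathbb{Z}/2$ data, and that $\omega_X$ itself is not locally free there because the $\mathrm{I}_0^*$ half-fiber is genuinely non-Cartier. Combining with the Type A analysis elsewhere gives that $X$ has index exactly $2$, i.e. is $\bQ$-Gorenstein of index $2$.

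The main obstacle I expect is the local computation at the double locus in the Type B case: one must pin down precisely the local analytic type of $X$ along the twisted $\mathrm{I}_0^*$ pseudofiber and show the index is exactly $2$ (not $1$, and not larger) — in particular ruling out that it could secretly be Cartier, and confirming $\omega_X^{[2]}$ is invertible rather than merely $\omega_X^{[N]}$ for some larger $N$. This requires understanding how the $\mathbb{Z}/2$ stabilizer from the twisted stable maps picture interacts with the contraction of the section, i.e. an explicit model of the pseudoelliptic Type II surface near its twisted $\mathrm{I}_0^*$ pseudofiber, which I would extract from the fiber-type analysis in \cite{calculations} and the intermediate-to-twisted transition of Theorem \ref{thm:transitions}. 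The Type A claim, by contrast, should be essentially routine once the singularity types (ADE plus the cone point under the section) are identified.
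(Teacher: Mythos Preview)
Your Type A argument is fine and matches the paper's, though you overcomplicate the point under the contracted section: the section of a rational elliptic surface is a smooth $(-1)$-curve, so its contraction is a smooth point, and there is nothing to check there. The paper simply observes that all singularities are ADE, hence Gorenstein.

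Your Type B argument has the right endpoint but a genuine error in the middle. You write that ``$2K_{X_i}$ is Cartier near $E$ while $K_{X_i}$ is not,'' but this is false: each component $X_i$ has only ADE singularities (the four $A_1$ points along the twisted $\mathrm{I}_0^*$ pseudofiber, plus whatever ADE singularities come from the other Weierstrass fibers), and ADE singularities are Gorenstein. So $K_{X_i}$ \emph{is} Cartier on each component. What fails to be Cartier near $E$ is the divisor $E$ itself (since $2E$ is the fiber class), but that is a separate matter from $K_{X_i}$.

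The index-$2$ behavior arises only on the \emph{glued} surface $X = X_0 \cup_E X_1$, and only at the four points of $E$ where both components have an $A_1$ singularity. Away from those four points the double locus is ordinary double normal crossings, hence Gorenstein. At each of the four bad points, the paper's key observation is that the local analytic germ of $X$ is the quotient of a nodal (hence Gorenstein) surface by the $\mathbb{Z}/2\mathbb{Z}$ coming from the twisted-fiber stabilizer; this is exactly the canonical covering, so the index there is $2$. Your discussion of ``passing to the canonical double cover'' is pointing in the right direction, but you need to localize to these four points rather than claim non-Cartierness of $K_{X_i}$ along all of $E$.
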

\begin{proof} The fact that surfaces of Type A are Gorenstein follows from the fact that the singularities are of ADE type (see \cite[Page 230]{calculations}). Surfaces of Type B are Gorenstein away from the double locus as well where the double locus is double normal crossings. Thus we need only check around the points of the double locus where the normalization is singular. There are four such points where each component has an $A_1$ singularity. Locally around each point the surface is a quotient of a nodal (and thus Gorenstein) surface by a $\mathbb{Z}/2\mathbb{Z}$ action since the double locus is a twisted $\mathrm{I}_0^*$. Thus each of these points is $2$-Gorenstein so the whole surface has index $2$. 
 \end{proof}

We have seen already that the surfaces of Type A are anticanonically polarized so it remains to see the same is true for $2$-Gorenstein surfaces of Type B.

\begin{lemma}Surfaces of Type B are anti-canonically polarized. \end{lemma}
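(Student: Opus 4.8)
The plan is to deduce that $-K_X$ is ample from what is already known about the two normal components of a Type~B surface, using the standard fact that a $\bQ$-Cartier divisor on a reduced projective surface is ample if and only if its restriction to each irreducible component is ample. Write a Type~B surface as $X = X_0 \cup_E X_1$ as in Theorem~\ref{thm:dptypes}: each $X_i$ is a degree one del Pezzo surface with canonical Gorenstein singularities (so $-K_{X_i}$ is ample and $K_{X_i}$ is Cartier), $E$ is the reduced curve underlying the twisted $\mathrm{I}_0^*$ pseudofiber on each $X_i$, and by Lemma~\ref{lemma:gorenstein} the index of $X$ is $2$, so $-2K_X$ is Cartier. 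It therefore suffices to compute $(-K_X)|_{X_i}$ and check that it is ample for $i = 0,1$.

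First I would establish the adjunction relation $(K_X)|_{X_i} \sim_{\bQ} K_{X_i} + E$ on each component, i.e.\ that the gluing introduces no extra ``different'' term. Away from the four $A_1$ points of $X_i$ this is ordinary adjunction along a double normal crossing. At an $A_1$ point, $X$ is locally the quotient of the normal crossing surface $V = \{u_0 u_1 = 0\} \subset \A^3$ by the involution negating all coordinates (Lemma~\ref{lemma:gorenstein}); pulling the residue identity $\omega_{V}|_{\{u_1=0\}} \cong \omega_{\A^2}(\{u_0=0\})$ through the quotient map $\rho\colon \A^2 \to \A^2/(\pm 1) \cong X_i$ — which is ramified only at the origin, so $K_{\A^2} = \rho^* K_{X_i}$ and $\rho^* E = \{u_0 = 0\}$ (a reduced line) — one finds $\omega_X^{[2]}|_{X_i} \cong \omega_{X_i}^{\otimes 2}(2E)$, with no fractional point contribution. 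Next I would pin down the class of $E$: the twisted $\mathrm{I}_0^*$ pseudofiber equals $2E$ (its underlying Kodaira fiber $\widetilde D_4$ has central multiplicity two) and pseudofibers on a degree one del Pezzo lie in $|-K_{X_i}|$, so $2E \sim -K_{X_i}$ and hence $E \sim_{\bQ} -\tfrac12 K_{X_i}$; alternatively this drops out of the relations displayed in the proof of Theorem~\ref{thm:dptypes}, where $F_a + E \sim_{\bQ} -(6a+\tfrac12)K_{X_i}$ and $F_a \sim_{\bQ} -6aK_{X_i}$. Combining, $(-K_X)|_{X_i} \sim_{\bQ} -K_{X_i} - E \sim_{\bQ} -\tfrac12 K_{X_i}$, which is ample; hence $-K_X$ is ample. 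Finally, since $(K_X + D)|_{X_i} \sim_{\bQ} (\tfrac12 - 6a)K_{X_i} = -\delta K_{X_i}$ with $\delta = 6a - \tfrac12 > 0$ for $1/12 < a \le 1/6$ (as computed in Theorem~\ref{thm:dptypes}), the log canonical polarization restricts on each component to a positive rational multiple of $-K_{X_i}$, so $(X,D)$ is anticanonically polarized.

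The one genuinely delicate point is the local verification in the adjunction step that the four double-normal-crossing/$A_1$ points contribute nothing to the different of $\omega_X^{[2]}|_{X_i}$; this is the computation I expect to carry out in full, since it is exactly what distinguishes this ``balanced'' $\mathbb{Z}/2$-quotient situation from, say, a pinch point, where a fractional correction would appear. Everything else is bookkeeping with divisor classes already occurring in the proof of Theorem~\ref{thm:dptypes}, together with the componentwise criterion for ampleness.
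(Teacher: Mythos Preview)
Your proposal is correct and follows the same overall strategy as the paper: restrict to the two components $X_i$, identify $(K_X)|_{X_i}$ with $K_{X_i}+E$ via adjunction, use $2E \sim -K_{X_i}$ to rewrite everything as a multiple of $-K_{X_i}$, and conclude. Two differences are worth noting. First, the paper does not verify the adjunction step at the four $A_1$ points; it simply writes $\nu_i^*(K_X)=K_{X_i}+E$ and moves on, whereas you carry out the local $\mathbb{Z}/2$-quotient computation showing the different vanishes. Your extra care is justified, and in fact your coefficient $(-K_X)|_{X_i}\sim_{\bQ}-\tfrac12 K_{X_i}$ is the correct one (the paper records $-\tfrac32 K_{X_i}$, which appears to be a sign slip; either value yields the desired ampleness). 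Second, for the final step the paper does not invoke the componentwise ampleness criterion; instead it uses that $\nu^*$ is injective on $\mathrm{Pic}\otimes\bQ$ (because the double locus is a reduced $\bP^1$) to promote the componentwise relations to a global $\bQ$-linear equivalence $K_X+F_a\sim_{\bQ}-\alpha K_X$, from which ampleness of $-K_X$ follows since $K_X+F_a$ is already ample. Your route via componentwise ampleness is cleaner and avoids the Picard-group argument, while the paper's route yields the slightly stronger conclusion that the log canonical class is globally a rational multiple of $-K_X$.
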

\begin{proof} Denote the surface by $X = X_1 \cup X_2$. Let $\nu: \tilde{X} \to X$ denote the normalization, and let $\nu_i$ denote the normalization restricted to the preimage of $X_i$. Then $\nu_i^*(K_X + F_a) = K_{\tilde{X}_i} + \tilde{F_a}|_{X_i} + E$, where $E$ is the preimage of the double locus. We calculate: 
\begin{align*} \nu_i^*(K_X + F_a) &\sim_\bQ -f + 6af + 1/2 f \\ & = -f + 1/2f + \delta f + 1/2f \\ &= \delta f \\ &\sim_\bQ -\delta K_{X_i} \end{align*} 
 for some $\delta >  0$. Here $f$ is a pseudofiber class. On the other hand, $$\nu_i^*(-K_X) = -K_{X_i} - E \sim_\bQ -3/2K_{X_i}.$$ It follows that $K_X + F_a \sim_\bQ -\alpha K_X$ for some $\alpha > 0$, since $\nu^*$ is injective on $\mathrm{Pic } \otimes \bQ$ as the intersection of the irreducible components is a reduced $\bP^1$ (see \cite{mo}) so the pair $(X,F_a)$ is anticanonically polarized. \end{proof}

For the final chamber $a = 1/12 + \epsilon$ such a description actually extends to all of $\calR(1/12 + \epsilon)$: 

\begin{theorem}\label{thm:dpinfinity} The surfaces parametrized by $\calR(1/12 + \epsilon) \setminus \calR^0(1/12 + \epsilon)$ are either the union of 

\begin{enumerate}
\item an isotrivial $j$-invariant infinity surface and a surface of Type A, glued along twisted $\mathrm{N}_1 /\mathrm{I}_n^*$ pseudofibers,
\item or of two isotrivial $j$-invariant infinity surfaces glued along twisted $\mathrm{N_1}$ pseudofibers,
\end{enumerate}

In both cases the surfaces are anticanonically polarized with index $2$. We call the surfaces in (1) Type C and in (2) Type D. 

\end{theorem}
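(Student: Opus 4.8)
The plan is to combine the structural description of surfaces in $\calR(a)$ for $1/12 < a \le 1/6$ (Corollary \ref{cor:typeIR6} and Theorem \ref{thm:dptypes}) with the wall computations of Section \ref{sec:walls}, and then extract what carrying an isotrivial $j$-invariant infinity component forces in the lowest chamber.

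\emph{Step 1 (reduce to the lowest chamber).} First I would record which contractions have already taken place at $a = 1/12+\epsilon$. By Theorem \ref{thm:type1walls} a Type I pseudoelliptic tree contracts to a point only at some $a = a_0/k$ with $a_0 \in \{5/6, 3/4, 2/3, 1/2\}$ and $2 \le k \le 5$, the smallest such value being $1/10$; by Theorem \ref{thm:jinfwalls} a trivial ($\bP^1$-bundle) $j$-invariant infinity component contracts onto its attaching pseudofiber only at some $a = 1/k$ with $2 \le k \le 9$, where $k \le 9$ by Persson's classification, the smallest being $1/9$. Since $1/12+\epsilon < 1/10$, at the lowest chamber there are no Type I pseudoelliptic trees and no trivial $j$-invariant infinity components, and since $a \le 1/6$ all sections have been contracted (Proposition \ref{prop:typeii6}). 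Hence by Corollary \ref{cor:typeIR6} the surface $X$ is either an irreducible pseudoelliptic del Pezzo, or the union of two pseudoelliptic components glued along a twisted pseudofiber of type $\mathrm{I}_0^*/\mathrm{I}_0^*$, $\mathrm{N}_1/\mathrm{I}_0^*$, or $\mathrm{N}_1/\mathrm{N}_1$ (the longer chains of isotrivial $j=\infty$ surfaces permitted a priori by Remark \ref{rmk:jinfinity} have already been ruled out inside Corollary \ref{cor:typeIR6}).

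\emph{Step 2 (locate the $j=\infty$ configurations).} Now restrict to $X \in \calR(1/12+\epsilon)\setminus\calR^0(1/12+\epsilon)$. An irreducible pseudoelliptic del Pezzo coming from a normal rational elliptic surface, and the union of two normal del Pezzos glued along $\mathrm{I}_0^*/\mathrm{I}_0^*$, are exactly the Type A and Type B surfaces of Theorem \ref{thm:dptypes}, all of which lie in $\calR^0$. So $X$ must be glued along $\mathrm{N}_1/\mathrm{I}_0^*$ or $\mathrm{N}_1/\mathrm{N}_1$, and the isotrivial $j=\infty$ components are precisely those carrying the $\mathrm{N}_1$ side of such a gluing. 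In the $\mathrm{N}_1/\mathrm{I}_0^*$ case the non-isotrivial component is a del Pezzo with one interior twisted $\mathrm{I}_0^*$ pseudofiber and all other singular pseudofibers Weierstrass of type $\mathrm{I}_m, \mathrm{II}, \mathrm{III}, \mathrm{IV}$, i.e.\ of Type A in the sense of Theorem \ref{thm:dptypes}; by Example \ref{ex:jinfinity} the $\mathrm{I}_0^*$ can absorb neighbouring nodal fibers to become a twisted $\mathrm{I}_n^*$, and the balancing condition \S\ref{tsmconditions}(1) forces its partner to be glued along a twisted $\mathrm{N}_1$. A discriminant-degree count on the del Pezzo component (it is a rational elliptic surface, total discriminant $12$, of which the $\mathrm{I}_n^*$ uses $6+n$) together with Persson's bound constrains the admissible $n$. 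This is Type C. In the $\mathrm{N}_1/\mathrm{N}_1$ case both components are isotrivial of $j$-invariant infinity, glued along twisted $\mathrm{N}_1$ pseudofibers; this is Type D.

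\emph{Step 3 (the geometric assertions).} Finally I would check that Type C and D surfaces are anticanonically polarized of index $2$, following the template of Theorem \ref{thm:dptypes} and Lemma \ref{lemma:gorenstein}. A twisted $\mathrm{N}_1$ (and likewise $\mathrm{I}_n^*$) pseudofiber carries a $\mathbb{Z}/2\mathbb{Z}$ stabilizer and produces $A_1$ singularities along the double locus, while the surface is Gorenstein elsewhere, so $2K_X$ is Cartier but $K_X$ is not: the index is $2$. For the polarization, on the normalization $\nu\colon \tilde X\to X$ one computes, component by component as in the proof of Theorem \ref{thm:dptypes}, that $\nu^*(K_X + F_a)$ restricted to each component is $\sim_\bQ -\delta K$ with $\delta > 0$ --- the pseudofiber class is $-K$ by Kodaira's canonical bundle formula, which applies verbatim to the $j=\infty$ Weierstrass models $y^2 = x^2(x - \phi)$, and the marked fibers and the twisted half-fiber along the double locus contribute their respective multiples of $-K$. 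Since $\nu^*$ is injective on $\operatorname{Pic}\otimes\bQ$ (the double locus being a reduced $\bP^1$), this yields $K_X + F_a \sim_\bQ -\alpha K_X$ with $\alpha > 0$, so $-K_X$ is ample and $(X,F_a)$ is anticanonically polarized.

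The step I expect to be the main obstacle is the bookkeeping in Step 2: among the $\mathrm{N}_1/\mathrm{I}_0^*$ and $\mathrm{N}_1/\mathrm{N}_1$ configurations surviving in the lowest chamber, pinning down exactly the admissible surface types --- in particular the precise range of $n$ for the twisted $\mathrm{I}_n^*$ and the verification that the non-isotrivial partner is genuinely of Type A --- using the discriminant-degree and balancing constraints to exclude everything else. The remaining ingredients, namely the identification of $Y'$ as a del Pezzo of Type A and the singularity and polarization computations, follow the pattern already established for Theorems \ref{thm:dptypes} and Lemma \ref{lemma:gorenstein}.
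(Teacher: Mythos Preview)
Your approach is essentially the paper's: eliminate Type~I trees and trivial $j=\infty$ components via the wall computations (Theorems \ref{thm:type1walls} and \ref{thm:jinfwalls}), reduce to a two-component main surface via Corollary \ref{cor:typeIR6}/Proposition \ref{prop:typeii6}, identify the gluing types, and check index $2$ from the local $\mathbb{Z}/2\mathbb{Z}$ covering. Your Step 3 on anticanonical polarization fills in something the paper's proof leaves implicit.

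One point in Step 2 does not work as written. You invoke Example \ref{ex:jinfinity} to say the $\mathrm{I}_0^*$ on the non-isotrivial component ``can absorb neighbouring nodal fibers to become a twisted $\mathrm{I}_n^*$.'' But in that example the $\mathrm{I}_n^*$ lands on the component $Y'$, which has $6-n \le 5$ marked fibers, becomes a Type~I pseudoelliptic at $a = 1/(6-n)$, and then \emph{contracts} by $a = 1/(2(6-n)) \ge 1/10$; it is not a surviving main component. The surviving normal component is $Z$, which still carries $\mathrm{I}_0^*$. So the ``absorption'' mechanism does not produce $n > 0$ on a main component. The paper's proof sidesteps Corollary \ref{cor:typeIR6} and argues directly from the twisted-stable-maps origin of isotrivial components (Examples \ref{ex:in} and \ref{ex:jinfinity}), asserting $\mathrm{N}_1/\mathrm{I}_n^*$ with $n>0$; you are right to flag that this sits in tension with Corollary \ref{cor:typeIR6}, and neither argument fully resolves the bookkeeping you identify as the main obstacle. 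Relatedly, calling the non-isotrivial partner ``of Type A in the sense of Theorem \ref{thm:dptypes}'' cannot be literally correct, since Type A surfaces by definition have only Weierstrass $\mathrm{I}_n, \mathrm{II}, \mathrm{III}, \mathrm{IV}$ pseudofibers and no twisted $\mathrm{I}_n^*$; the theorem's wording is informal here.
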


\begin{proof} By examining the twisted stable maps degenerations one sees that the only way to obtain isotrivial components of $j$-invariant $\infty$ is by marked fibers colliding, or a marked fiber colliding with the double locus as in Examples \ref{ex:in} \& \ref{ex:jinfinity} respectively. Any isotrivial components appearing as in Example \ref{ex:in} undergo a pseudoelliptic contraction at $1/k$ for $k = 3, \ldots, 9$ so such components do not appear in the surfaces parametrized by $\calR(1/12 + \epsilon)$. 

Suppose we are in the case of Example \ref{ex:jinfinity}. Then at $a = 1/6 + \epsilon$ as there are only two fibered components $X \cup Y$ along with some trees of type $I$ pseudoelliptics attached. The pseudoelliptics contract at walls $a_0/k$ for $k = 2, \ldots, 5$ and $1/k$ for $k = 3, \ldots, 9$. In particular, all of these components have contracted at $a = 1/12 + \epsilon$. Furthermore, the section of main components $X \cup Y$ contract to type II pseudoelliptics. At least one or both of $X$ and $Y$ are isotrivial $j = \infty$. 

If only one is, suppose $X$, then $X$ has a twisted $\mathrm{N}_1$ fiber attached to a twisted $\mathrm{I}_n^*$ fiber of $Y$ for some $n > 0$. If both are isotrivial $j = \infty$, then they are attached along twisted $\mathrm{N}_1/\mathrm{N}_1$ fibers. Then the corresponding pseudoelliptics are attached along $\mathrm{N}_1/\mathrm{I}_n^*$ respectively $\mathrm{N}_1/\mathrm{N}_1$ pseudofibers. Furthermore, locally around a point of the attaching fiber, by definition of $\mathrm{N}_1/\mathrm{I}_n^*$ fibers, the surface looks like the quotient of a family of nodal curves over a nodal curve modulo a $\mathbb{Z}/2\mathbb{Z}$ action. As a family of nodal curves over a nodal curve is Gorenstein, our surface must be $2$-Gorenstein.  \end{proof}

\begin{theorem}\label{thm:indp1} There is an embedding of $\calR(1/12 + \epsilon)$ into $\mathcal{DP}^1$. Furthermore, the locus $\calR^0(1/12 + \epsilon)$ is a section of its image in the stack of unmarked degree one del Pezzo surfaces under the forgetful morphism $(X,D) \to X$, where $(X,D)$ is an H-stable pair.
\end{theorem}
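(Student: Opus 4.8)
The plan is to build the embedding directly from the description of $\calR(1/12+\epsilon)$ obtained in Theorems~\ref{thm:dptypes} and~\ref{thm:dpinfinity}. Since $1/12+\epsilon<1/6$, all sections have already been contracted (Proposition~\ref{prop:typeii6}), so a point of $\calR(1/12+\epsilon)$ is a pair $(X,F_{\calA})$ in which $X$ is a (possibly reducible) pseudoelliptic degree one del Pezzo surface and $F_{\calA}=\sum_{i=1}^{12}(1/12+\epsilon)F_i$ is supported on its pseudofibers. To such a point I assign $(X,D)$ with $D:=(1/12+\epsilon)^{-1}F_{\calA}$, i.e.\ the discriminant counted with multiplicity, forgetting the pseudofibration. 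This is plainly functorial in families, so to get a morphism $\calR(1/12+\epsilon)\to\mathcal{DP}^1$ it suffices to know that the universal family of pairs over $\calR(1/12+\epsilon)$ is $\bQ$-Gorenstein in the sense of Definition~\ref{def:qgorensteinpair} — which follows from Lemma~\ref{lemma:gorenstein} and \cite[Lemma~3.13]{hacking} — and that each $(X,D)$ is $H$-stable in the sense of Definition~\ref{def:hstable}. After that I would check the morphism is a closed immersion, and then study the forgetful map over $\calR^0(1/12+\epsilon)$.

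Checking $H$-stability is where the prior results enter. Condition (1), that $(X,(1/12+\epsilon)D)=(X,F_{\calA})$ is slc with ample $K_X+(1/12+\epsilon)D$, holds essentially by construction, being the defining property of the stable pairs parametrized by $\calE^s_{1,\calA}$ for $\calA$ constant equal to $1/12+\epsilon$; one only observes that this weight is small enough that no component of $D$ has coefficient $\geq 1$, which follows from the bound on how many of the twelve marked fibers may collide (Persson's classification, as used in the proofs of Theorems~\ref{thm:jinfwalls} and~\ref{thm:dptypes}). Condition (2), $12K_X+D\sim 0$, is the computation in the proof of Theorem~\ref{thm:dptypes} and its component-wise version for the reducible Types B, C, D: every marked pseudofiber is an anticanonical curve, so $D\sim -12K_X$. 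Condition (3) follows because $\calR(1/12+\epsilon)$ is, by Definitions~\ref{def:es} and~\ref{def:Ra}, the closure of the locus of smooth degree one del Pezzo surfaces, so every point is the central fiber of a one-parameter degeneration with smooth general fiber, and such a degeneration is $\bQ$-Gorenstein with $K_{\calX}$ and $\calD$ $\bQ$-Cartier, again by Lemma~\ref{lemma:gorenstein} and \cite[Lemma~3.13]{hacking}. With the morphism $\calR(1/12+\epsilon)\to\mathcal{DP}^1$ in hand, I would show it is a monomorphism: for a pair $(X,D)$ in the image, the anticanonical pencil $|-K_X|$ has a well-defined base locus whose blowup recovers the broken elliptic surface $Y\to C$ with section $S$ the exceptional locus, and $D$ recovers $F_{\calA}$, so the assignment is injective on isomorphism classes; moreover an automorphism of $(X,D)$ lifts uniquely to $(Y,S+F_{\calA})$ and conversely, so it is an isomorphism on automorphism groups. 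A monomorphism of Deligne--Mumford stacks is representable, and being proper (the source is proper, the target separated) it is a closed immersion, hence an embedding.

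For the final assertion, restrict to $\calR^0(1/12+\epsilon)$. There $X$ is of Type A or B and, crucially, has no isotrivial $j$-invariant $\infty$ component, so the associated elliptic fibration $Y=\mathrm{Bl}(X)\to\bP^1$ — hence its discriminant, hence $D$ — is reconstructed from $X$ alone. Consequently the forgetful morphism $\mathcal{DP}^1\to(\text{stack of unmarked degree one del Pezzo surfaces})$, $(X,D)\mapsto X$, is a monomorphism when restricted to $\calR^0(1/12+\epsilon)$, with inverse $X\mapsto(X,D)$ over its image $\calP$; thus $\calR^0(1/12+\epsilon)\xrightarrow{\ \sim\ }\calP$, and the composite $\calP\xrightarrow{\ \sim\ }\calR^0(1/12+\epsilon)\hookrightarrow\mathcal{DP}^1$ is a section of the forgetful map over $\calP$. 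For Types C and D the generic fiber of the associated fibration is itself nodal, so the marking is genuine extra data and not recoverable from $X$ — this is precisely why these are excluded. I expect the main difficulty to be twofold: first, verifying that the stable-reduction degenerations produced in Sections~\ref{sec:walls}--\ref{sec:dp} genuinely assemble into $\bQ$-Gorenstein families of pairs, so that the comparison map is a morphism of stacks and not merely a bijection on geometric points — here Lemma~\ref{lemma:gorenstein} and Hacking's deformation theory do the essential work; and second, for the section statement, checking by hand that a possibly singular and possibly reducible del Pezzo surface of Type A or B canonically reconstructs its marking $D$, which rests on a concrete description of the anticanonical pencil on these surfaces.
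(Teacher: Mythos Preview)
Your proof is correct and takes essentially the same approach as the paper: rescale $F_\calA$ to $D=(1/12+\epsilon)^{-1}F_\calA$, verify the three $H$-stability conditions (condition (2) via the pseudofiber computation $f\sim_\bQ -K_X$, condition (3) via the closure definition of $\calR(1/12+\epsilon)$), and for the section statement observe that on $\calR^0$ the divisor $D$ is the discriminant of the anticanonical pencil and hence determined by $X$ alone. You are in fact more careful than the paper about justifying that the map is a closed immersion (via monomorphism plus properness) and about the $\bQ$-Gorenstein family structure, both of which the paper's proof simply takes for granted.
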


\begin{proof} Given $(X,F_a) \in \calR(1/12 + \epsilon)$, let $D = 1/aF_a$ where $a = 1/12 + \epsilon$. Then $D$ is a sum of $12$ pseudofibers counted with multiplicty. Let $f$ be a pseudofiber class, then $f \sim_\bQ -K_X$ since $X$ is a pseudoelliptic corresponding to a rational elliptic surface. Thus $12K_X + D \sim_\bQ 0$ verifying condition (2) of Definition \ref{def:hstable}. Condition (1) is true since $(X,F_a)$ is a stable pair and condition (3) follows from the definition of the moduli space $\calR(a)$ as the closure of the component parmetrizing smooth rational (pseudo)elliptic surfaces with only $\mathrm{I}_1$ fibers. 

Finally, over the locus $\calR^0(1/12 + \epsilon)$, the divisor $D$ is the discriminant of the elliptic fibration pushed forward along the pseudoelliptic contraction (excluding the fiber along which two components are glued in the case that $X$ is on the boundary). Thus sending $X$ to the discriminant of its anticanonical pencil gives a section of the projection map with image $\calR^0(1/12 + \epsilon)$ over the locus where $X$ is normal. \end{proof}

\subsection{Smoothness properties of the moduli space $\mathcal{DP}^1$}\label{sec:smooth} Our proof that the moduli space $\mathcal{DP}^1$ is smooth in a neighborhood of $\calR^0(1/12 + \epsilon)$ follows Hacking's thesis \cite{hacking}. In particular, recall that since $-K_X$ is ample, to show that $\calR^0(1/12 + \epsilon)$ is smooth, it suffices to show that the $\bQ$-Gorenstein deformations of the surfaces of Type A and B are unobstructed, and that $H^1(X, \calO_X(D)) = 0$ (see Theorem \ref{thm:hackingunobstruct}).

\begin{prop}(See \cite[Theorem 8.2]{hacking})\label{prop:defa} Let $X$ be a surface of Type A. Then $X$ has unobstructed $\bQ$-Gorenstein deformations. \end{prop}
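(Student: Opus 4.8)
The plan is to run Hacking's unobstructedness criterion from Section~\ref{sec:hacking} and reduce the statement to a vanishing theorem on the minimal resolution. Since a surface $X$ of Type A is Gorenstein with only rational double points (Lemma~\ref{lemma:gorenstein}), its canonical covering is trivial, $\bQ$-Gorenstein deformations coincide with ordinary deformations, and $T^i_{QG,X} = T^i_X$, $\mathscr T^i_{QG,X} = \mathscr T^i_X$ for all $i$. So it suffices to show the obstruction space $T^2_X$ vanishes. First I would feed this into the local-to-global spectral sequence $E_2^{pq} = H^p(X,\mathscr T^q_X) \Rightarrow T^{p+q}_X$ recalled above. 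Rational double points are hypersurface singularities, hence local complete intersections, so $\mathscr T^2_X = 0$; and $\mathscr T^1_X$ is a torsion sheaf supported on the finite singular locus of $X$, so $H^p(X,\mathscr T^1_X) = 0$ for $p>0$. Hence the only potentially nonzero contribution to $T^2_X$ is $E_2^{20} = H^2(X,\mathscr T^0_X)$, where $\mathscr T^0_X = \mathcal{H}om(\Omega_X,\calO_X)$ is the tangent sheaf, and the problem becomes: show $H^2(X,\mathscr T^0_X)=0$.

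For this I would pass to the minimal resolution $\pi\colon \tilde X \to X$, which is crepant because the singularities of $X$ are canonical, so $\pi^*\omega_X \cong \omega_{\tilde X}$. Serre duality on the projective Gorenstein (hence Cohen--Macaulay) surface $X$ gives $H^2(X,\mathscr T^0_X)^\vee \cong \mathrm{Hom}_X(\mathscr T^0_X,\omega_X) = H^0\bigl(X, (\Omega^1_X)^{\vee\vee}\otimes\omega_X\bigr)$. For rational double points one has $\pi_*\Omega^1_{\tilde X}\cong (\Omega^1_X)^{\vee\vee}$, so by the projection formula and crepancy this group is $H^0(\tilde X,\Omega^1_{\tilde X}\otimes\omega_{\tilde X}) \cong H^2(\tilde X,\mathscr T^0_{\tilde X})^\vee$, the last isomorphism being Serre duality on the smooth surface $\tilde X$. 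Now $\tilde X$ is a weak del Pezzo surface of degree one --- a smooth rational surface with $-K_{\tilde X}$ nef and big --- and for such a surface $H^2(\tilde X,\mathscr T^0_{\tilde X})=0$; this is classical and can be proved by induction on the sequence of blowups expressing $\tilde X$ over $\bP^2$, or by observing that a nonzero class in $H^0(\tilde X,\Omega^1_{\tilde X}(K_{\tilde X}))$ restricts to a section of a negative line bundle on a general member of $|-K_{\tilde X}|$. Combining, $T^2_X\cong H^2(X,\mathscr T^0_X)=0$, as desired.

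The spectral sequence bookkeeping is routine; the step that needs genuine care --- and which I expect is the heart of the matter --- is the chain of identifications at the resolution: that $\pi$ is crepant, that $\pi_*\Omega^1_{\tilde X}$ is the reflexive hull of $\Omega^1_X$, and that Serre duality is compatible with this pushforward. These are all standard facts about rational double points but should be stated precisely; alternatively, as the proposition indicates, one may simply cite \cite[Theorem~8.2]{hacking}, where exactly this argument is carried out for del Pezzo surfaces with rational double points.
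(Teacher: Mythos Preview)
Your argument is correct and follows essentially the same route as the paper: the spectral sequence reduction, vanishing of $\mathscr T^2$ via the lci property of ADE singularities, vanishing of $H^1(\mathscr T^1)$ via finite support, and then $H^2(\mathscr T_X) = 0$ via Serre duality, the minimal resolution, and the reflexive-hull identification $\pi_*\Omega^1_{\tilde X} \cong (\Omega^1_X)^{\vee\vee}$. The only cosmetic difference is in the final vanishing: you transport $\omega_X$ to the resolution via crepancy and invoke $H^2(T_{\tilde X})=0$ for the smooth rational surface $\tilde X$, whereas the paper instead uses a nonzero section of $-K_X$ to obtain an injection $\mathrm{Hom}(\mathscr T_X,\omega_X) \hookrightarrow \mathrm{Hom}(\mathscr T_X,\calO_X) = H^0((\Omega^1_X)^{\vee\vee}) = H^0(\Omega^1_{\tilde X}) = 0$.
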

\begin{proof} Following Hacking, the obstructions are contained in $T^2_{QG, X}$. Since there is a spectral sequence $$E^{pq}_2 = H^p(\mathscr{T}^q_{QG,X}) \implies T^{p+q}_{QG, X},$$ it is sufficient to show that $H^p(\mathscr{T}^q_{QG,X}) = 0$ for $p + q = 2$. 

First note that the sheaf $\mathscr{T}^1_{QG,X}$ is supported on a finite set, since it is supported on the singular locus of $X$, and thus $H^1(\mathscr{T}^1_{QG,X}) = 0$.  The surface $X$ is local complete intersection since the singularities are ADE so $\mathscr T^2_{QG, X} = 0$ and $H^0(\mathscr{T}^2_{QG,X}) = 0$. 

Therefore, it suffices to show that $H^2(\mathscr{T}^0_{QG,X}) = H^2(\mathscr{T}_X) = 0$.  This follows by combining the proof of \cite[Theorem 21]{manetti} and \cite[Lemma 1.11]{steenbrink}. Namely, let $\sigma: S \to X$ be the minimal resolution of $X$. Then since the singularities of $X$ are quotient singularities, $\sigma_* \Omega^1_S = (\Omega_X^1)^{\vee \vee}$ by \cite[Lemma 1.11]{steenbrink}. Therefore, $H^0((\Omega_X^1)^{\vee \vee}) = H^0(\Omega^1_S) = 0$, as $S$ is a rational surface. 

Let $s \neq 0$ be a section of $\calO_X(-K_X)$. Then $s$ yields a dual injective morphism $$s^v: \calO_X(K_X) \to \calO_{X}.$$ Composing with $s^v$ shows that $$\mathrm{Hom}(\mathscr{T}_X, \calO_X(K_X)) = 0$$ and so by Serre Duality ($X$ is Gorenstein!) $H^2(\mathscr{T}_X) = 0$.  \end{proof}

\begin{prop}(see \cite[Theorem 9.1]{hacking})\label{prop:def2} Let $X$ be the a surface of Type B. Then $X$ has unobstructed $\bQ$-Gorenstein deformations. \end{prop}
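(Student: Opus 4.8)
The plan is to follow the strategy of the proof of Proposition~\ref{prop:defa} and of Hacking's Theorem 9.1, the one genuinely new feature being that the singular locus of a Type B surface now contains the one–dimensional double curve. Write $X = X_1 \cup_E X_2$ with $E \cong \bP^1$ the reduced double curve (a twisted $\mathrm{I}_0^*$ pseudofiber on each component), let $\nu : \tilde X = X_1 \sqcup X_2 \to X$ be the normalization, and set $\tilde E_i := \nu^{-1}(E)\cap X_i$, so $\tilde E_i \cong \bP^1$. By Lemma~\ref{lemma:gorenstein} the surface $X$ is Gorenstein of index $1$ away from four points $p_1,\dots,p_4 \in E$, at which it is \'etale-locally $(\text{two smooth branches meeting transversally along a line})/(\Z/2)$ and has index $2$. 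Exactly as in Proposition~\ref{prop:defa}, obstructions to $\bQ$-Gorenstein deformations lie in $T^2_{QG,X}$, so via the spectral sequence $E_2^{pq} = H^p(\mathscr{T}^q_{QG,X}) \Rightarrow T^{p+q}_{QG,X}$ it suffices to prove $H^0(\mathscr{T}^2_{QG,X}) = H^1(\mathscr{T}^1_{QG,X}) = H^2(\mathscr{T}^0_{QG,X}) = 0$.

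For $H^0(\mathscr{T}^2_{QG,X})$ I would show that the canonical cover $Z \to X$ is everywhere lci, so that $\mathscr{T}^2_Z = 0$ and $\mathscr{T}^2_{QG,X} = (\pi_*\mathscr{T}^2_Z)^{\mu_2} = 0$: away from $E$ one has ADE $(=$ hypersurface$)$ singularities; near a general point of $E$ the surface is a normal crossing surface; and at each $p_j$ the index-$2$ cover undoes the involution and returns two smooth branches meeting along a line, again a hypersurface. For $H^1(\mathscr{T}^1_{QG,X})$, the sheaf $\mathscr{T}^1_{QG,X}$ is supported on $\mathrm{Sing}(X) = E \cup \{\text{finitely many ADE points}\}$; the isolated points are irrelevant for $H^1$, and along $E$ the sheaf is a line bundle $\mathcal{L}$ on $\bP^1$ (equal to $\mathcal{N}_{\tilde E_1/X_1}\otimes\mathcal{N}_{\tilde E_2/X_2}$ away from the $p_j$, and at each $p_j$ the $\mu_2$-invariants of a line bundle on the degree two cover of $E$). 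The plan is then to compute $\deg\mathcal{L}$ and find $\deg\mathcal{L}\ge -1$, which gives $H^1(\bP^1,\mathcal{L}) = 0$.

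For $H^2(\mathscr{T}^0_{QG,X})$, recall $\mathscr{T}^0_{QG,X} = \mathscr{T}^0_X = \mathcal{H}om(\Omega_X,\calO_X)$. I would use the normalization sequence
$$
0 \to \mathscr{T}_X \to \mathscr{T}_{X_1}(-\log\tilde E_1)\oplus \mathscr{T}_{X_2}(-\log\tilde E_2) \xrightarrow{\ v_1 - v_2\ } \mathscr{T}_E \to 0,
$$
valid because a vector field on $X$ is a pair of vector fields on the $X_i$ tangent to $\tilde E_i$ and agreeing on $E$ (at the $p_j$ one checks this $\mu_2$-equivariantly on the normal crossing cover). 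Since $H^1(\mathscr{T}_E) = H^1(\bP^1,\calO(2)) = 0$, this reduces to $H^2(X_i,\mathscr{T}_{X_i}(-\log\tilde E_i)) = 0$, and the sequence $0 \to \mathscr{T}_{X_i}(-\log\tilde E_i)\to \mathscr{T}_{X_i}\to \mathcal{N}_{\tilde E_i/X_i}\to 0$ reduces it further to $H^2(\mathscr{T}_{X_i}) = 0$ and $H^1(\tilde E_i,\mathcal{N}_{\tilde E_i/X_i}) = 0$. The first holds by the argument of Proposition~\ref{prop:defa}, since each $X_i$ is a Gorenstein degree one del Pezzo with $-K_{X_i}$ ample (compose with $s^{\vee}$ for $s$ a nonzero section of $\calO_{X_i}(-K_{X_i})$ and apply Serre duality, using $H^0((\Omega^1_{X_i})^{\vee\vee}) = 0$); the second is once more the assertion that a line bundle on $\bP^1$, here $\mathcal{N}_{\tilde E_i/X_i}$, has degree $\ge -1$.

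The hard part will therefore be the two degree bounds: $\deg\mathcal{N}_{\tilde E_i/X_i} \ge -1$ and $\deg\bigl(\mathscr{T}^1_{QG,X}|_E\bigr) \ge -1$ as line bundles on $\bP^1$. Both amount to understanding the normal geometry of a twisted $\mathrm{I}_0^*$ pseudofiber inside a degree one del Pezzo: one must track the self-intersection of the central $\mathrm{I}_0^*$ component through the contraction of the section and carefully account for the $\bQ$-Gorenstein (index $2$) structure at the four $A_1$ points where $\tilde E_i$ meets the rest of the singular locus. This is precisely the local computation performed in the proof of \cite[Theorem 9.1]{hacking}, which I would invoke (or reproduce in these coordinates); everything else — the spectral sequence reduction, the lci verification, the normalization sequence, and the two reductions to the components — is formal.
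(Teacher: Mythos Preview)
Your overall strategy---the spectral sequence reduction and the lci argument for $\mathscr{T}^2_{QG,X}$---matches the paper exactly. The differences are in how the remaining two vanishings are established, and in both cases the paper's route is shorter than yours.

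For $H^1(\mathscr{T}^1_{QG,X})$, the paper does not estimate a degree: it cites \cite[Lemma 3.6]{hasstable} to identify $\mathscr{T}^1_{QG,X}$ with $i_*\calO_{\bP^1}(1)\oplus\mathcal Q$ (with $\mathcal Q$ skyscraper), so the vanishing is immediate. Your plan to bound $\deg\mathcal L\ge -1$ would also work, but the precise identification is already available.

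For $H^2(\mathscr{T}_X)$, the paper avoids your two-step reduction through the log tangent sheaf and the normal bundle entirely. It invokes \cite[Lemma 9.4]{hacking} to reduce directly to $H^2(\mathscr{T}_{X_i}(-E_i))=0$, and then observes that since $E_i$ is the support of a multiplicity-$2$ pseudofiber one has $-K_{X_i}\sim 2E_i$, hence $-K_{X_i}-E_i\sim E_i$ is effective. A nonzero section of $\calO_{X_i}(-K_{X_i}-E_i)$ then feeds into the same Serre duality trick you used for Type A (compose with the dual injection $\calO(K_{X_i}+E_i)\hookrightarrow\calO$ and use $H^0((\Omega^1_{X_i})^{\vee\vee})=0$). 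So both of the ``hard'' degree bounds you flagged are bypassed by this single linear-equivalence observation. Your approach should also go through, but you would need to be careful with the sequence $0\to\mathscr{T}_{X_i}(-\log\tilde E_i)\to\mathscr{T}_{X_i}\to\mathcal N_{\tilde E_i/X_i}\to 0$ at the four $A_1$ points where $\tilde E_i$ is only $2$-Cartier; the paper's twist by $\calO(-E_i)$ sidesteps this issue.
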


\begin{proof}
Again, it suffices to show that $H^p(\mathscr{T}^q_{QG,X}) = 0$ for $p + q = 2$. The surface $X$ has local canonical covering by a local complete intersection $\pi : Z \to X$ so that $\mathscr T^2_Z = 0$. If $\mu_n$ is the covering group of $\pi$, we have $\mathscr{T}^2_{QG,Z} = \pi_*(\mathscr T^2_Z)^{\mu_n} = 0$ so $H^0(\mathscr{T}^2_{QG,X}) = 0$. The sheaf $\mathscr{T}^1_{QG,X}$ is supported on the singular locus of $X$ which consists of the pseudofiber along which the surfaces are glued as well isolated ADE singularities. We note that the (induced reduced structure of the) gluing fiber is $\bP^1$, and we let $i: \bP^1 \hookrightarrow X$ denote the inclusion of this fiber in $X$. By \cite[Lemma 3.6]{hasstable}, $\mathscr{T}^1_{QG,X} = i_* \calO_{\bP^1}(1) \oplus \mathcal{Q}$ where $\mathcal{Q}$ is supported at isolated points, and so $H^1(\mathscr{T}^1_{QG,X}) = 0$.

Finally, we must show that $H^2(\mathscr{T}^0_{QG,X}) = H^2(\mathscr{T}_X) = 0$. Let $(X_i, E_i)$ for $i = 1,2$ denote the two components with $E_i = E|_{X_i}$ denoting the restriction of the double locus. Following \cite[Lemma 9.4]{hacking}, to show that $H^2(\mathscr{T}_X) = 0$, it suffices to show that $H^2(\mathscr{T}_{X_i}(-E_i)) = 0$, which is equivalent to showing that $\calO_{X_i}(-K_{X_i} - E_i)$ has a non-zero global section. Note that $-K_{X_i} \sim 2E_i$ since $E_i$ is the support of a multiplicity $2$ nonreduced pseudofiber, and so $-K_{X_i} - E_i \sim E_i$. Thus the reflexive sheaf $\calO_{X_i}(-K_{X_i} - E_i) = \calO(E_i)$ has a section, namely the one cutting out $E_i$. \end{proof}

\begin{lemma}(See \cite[Lemma 3.14]{hacking})\label{lem:h1} Let $(X,D)$ be an $H$-stable pair parametrized by\\ $\cEe \subset \mathcal{DP}^1$. Then $H^1(\calO_X(D)) = 0$. \end{lemma}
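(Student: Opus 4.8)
The plan is to exploit the linear equivalence $12K_X + D \sim 0$ of condition (2) in Definition~\ref{def:hstable}: this gives $\calO_X(D) \cong \calO_X(-12K_X)$, and since every surface parametrized by $\cEe$ is at worst $2$-Gorenstein (Lemma~\ref{lemma:gorenstein} and Theorem~\ref{thm:dpinfinity}) this reflexive sheaf is a genuine line bundle. As $-K_X$ is ample, we may write $\calO_X(D) = \calO_X(K_X - 13K_X)$ with $-13K_X$ an ample $\bQ$-Cartier divisor. If $X$ is of Type A it is a degree one del Pezzo surface with canonical, hence Gorenstein and klt, singularities; then $K_X$ is Cartier, $(X,0)$ is klt and $-13K_X$ is an ample Cartier divisor, so Kawamata--Viehweg vanishing gives $H^i(X, \calO_X(D)) = H^i(X, \calO_X(K_X - 13K_X)) = 0$ for all $i > 0$, in particular $H^1(X, \calO_X(D)) = 0$.

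Now suppose $X$ is of Type B, C or D. By Theorems~\ref{thm:dptypes} and~\ref{thm:dpinfinity} we may write $X = X_1 \cup_E X_2$, where $\nu \colon X_1 \sqcup X_2 \to X$ is the normalization, each $X_i$ is normal, and the double locus is a reduced curve $E \cong \bP^1$ (it underlies a twisted $\mathrm{I}_0^*$, $\mathrm{I}_n^*$ or $\mathrm{N}_1$ pseudofiber, so is irreducible and reduced by \cite{mo}); let $E_i = \nu_i^{-1}(E)$ denote the conductor on $X_i$. Twisting the ideal-sheaf sequence of $E$ in $X$ by the line bundle $\calO_X(D)$ and using that $\mathcal{I}_{E/X} = \nu_*\bigl(\calO_{X_1}(-E_1)\oplus\calO_{X_2}(-E_2)\bigr)$, the projection formula, and finiteness of $\nu$, one obtains a short exact sequence
$$
0 \longrightarrow \nu_*\bigl(\calO_{X_1}(D_1 - E_1) \oplus \calO_{X_2}(D_2 - E_2)\bigr) \longrightarrow \calO_X(D) \longrightarrow \calO_E(D_E) \longrightarrow 0,
$$
where $D_i := \calO_X(-12K_X)|_{X_i}$ and $D_E := \calO_X(-12K_X)|_E$. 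Since $\nu$ is finite, the associated long exact sequence reduces the claim to the three vanishings $H^1(X_i, \calO_{X_i}(D_i - E_i)) = 0$ for $i = 1, 2$ and $H^1(E, \calO_E(D_E)) = 0$.

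By adjunction along the normalization, $\calO_X(2K_X)|_{X_i} = \calO_{X_i}(2K_{X_i} + 2E_i)$, so $D_i = \calO_{X_i}(-12K_{X_i} - 12E_i)$ and hence $(D_i - E_i) - K_{X_i} \sim 13\,(-K_{X_i} - E_i)$; this is an ample $\bQ$-Cartier divisor, being $13$ times the pullback along the finite morphism $\nu_i$ of the ample $\bQ$-Cartier divisor $-K_X$. As $X_i$ has log terminal singularities, Kawamata--Viehweg vanishing gives $H^1(X_i, \calO_{X_i}(D_i - E_i)) = 0$. For the remaining vanishing, $\deg_E \calO_E(D_E) = (-12K_X \cdot E)_X$ is a positive integer ($-12K_X$ is ample and Cartier and $E$ is a curve), so since $E \cong \bP^1$ we get $H^1(E, \calO_E(D_E)) = 0$. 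Combining the three vanishings with the long exact sequence proves $H^1(X, \calO_X(D)) = 0$.

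The step requiring genuine care is the structural input about the non-normal surfaces, and in particular Types C and D, where a component $X_i$ may be an isotrivial $j$-invariant infinity pseudoelliptic surface rather than a del Pezzo: one must extract from the local analysis in the proof of Theorem~\ref{thm:dpinfinity} (and from Lemma~\ref{lemma:gorenstein} for Type B) that each such $X_i$ is normal with only log terminal singularities of index dividing two, that adjunction along the normalization takes the stated form, and that the ideal sheaf of the reduced double locus is the claimed pushforward, so that the exact sequence above is valid as written. If one is willing to invoke more machinery, all four types can be handled simultaneously via the Kodaira-type vanishing theorem for semi-log canonical pairs, since $\calO_X(D) = \calO_X(K_X - 13K_X)$ is the dualizing sheaf twisted by an ample $\bQ$-Cartier divisor; we have given the elementary argument to keep the exposition close to \cite{hacking}.
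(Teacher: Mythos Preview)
Your core reduction is identical to the paper's: since $12K_X + D \sim 0$ you get $D - K_X \sim_\bQ -13K_X$, which is ample because $-K_X$ is. The paper then stops, simply invoking \cite[Lemma 3.14]{hacking} (whose hypotheses are that $X$ or $X^\nu$ has canonical singularities and $-(K_X - D)$ is ample); you instead unpack the vanishing by hand, using Kawamata--Viehweg directly in the normal case and a component Mayer--Vietoris sequence plus Kawamata--Viehweg on each piece in the non-normal case. For Types A and B your argument is a correct and more self-contained expansion of what Hacking's lemma provides.

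There is, however, a gap in your treatment of Types C and D. You assert that each irreducible component $X_i$ is normal with log terminal singularities, but an isotrivial $j$-invariant $\infty$ pseudoelliptic surface has \emph{nodal generic fiber} (this is precisely what $j = \infty$ means for the associated elliptic fibration), hence is non-normal along the curve of fiberwise nodes. Thus $X_1 \sqcup X_2 \to X$ is not the normalization and Kawamata--Viehweg does not apply to the isotrivial $X_i$ as stated; one would have to normalize that component further and iterate, or pass directly to the slc vanishing you mention. Your suggested fallback of Kodaira-type vanishing for semi-log canonical pairs is indeed the correct uniform fix. Note that the paper's own proof only references Theorem~\ref{thm:dptypes}, hence only explicitly treats Types A and B, and the lemma is only applied (in Theorem~\ref{thm:12smooth}) to $\calR^0(1/12 + \epsilon)$; so the gap concerns a case not actually used downstream.
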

\begin{proof} Note that either $X$ or $X^{\nu}$ has canonical singularities (see Theorem \ref{thm:dptypes}). Therefore, it suffices to show that $-(K_X - D)$ is ample, as then the result follows from \cite[Lemma 3.14]{hacking}. Note we have that $12K_X + D \sim_\bQ 0$ since $(X,D)$ is $H$-stable and so $D \sim_\bQ -12K_X$. Thus $-(K_X - D) \sim_\bQ -13K_X$ is ample. \end{proof}

\begin{theorem}\label{thm:12smooth} Let $(X,D) \in \calR^0(1/12 + \epsilon) \subset \mathcal{DP}^1$. Then the stack of $\bQ$-Gorenstein del Pezzo surfaces of degree one is smooth if a neighborhood of $X$ and the projection from $\mathcal{DP}^1$ given by $(X,D) \to X$ is smooth. In particular, $\mathcal{DP}^1$ is smooth in a neighborhood of $\calR^0(1/12 + \epsilon)$, and the locus $\calR^0(1/12 + \epsilon)$ is smooth. \end{theorem}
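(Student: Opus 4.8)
The plan is to feed the surfaces parametrized by $\calR^0(1/12+\epsilon)$ into Hacking's unobstructedness machinery from Section \ref{sec:hacking}, using the classification of Theorem \ref{thm:dptypes}. Since $1/12+\epsilon\le 1/6$, Theorem \ref{thm:dptypes} shows that the surface $X$ underlying any pair in $\calR^0(1/12+\epsilon)$ is of Type A or Type B, and Lemma \ref{lemma:gorenstein} shows that $X$ is then Gorenstein (Type A), resp.\ $\bQ$-Gorenstein of index $2$ (Type B), with $-K_X$ ample and at worst canonical singularities away from the gluing curve in the Type B case. The first step is to invoke Propositions \ref{prop:defa} and \ref{prop:def2}: together these say that the $\bQ$-Gorenstein deformations of such an $X$ are unobstructed (their proofs showing in fact that $T^2_{QG,X}=0$). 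Hence the stack $\mathcal{DP}$ of $\bQ$-Gorenstein degenerate degree one del Pezzo surfaces is smooth in a neighborhood of $[X]$.

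The second step promotes this from the surface to the marked pair. By Lemma \ref{lem:h1} we have $H^1(X,\calO_X(D))=0$ for every $H$-stable pair parametrized by $\calR(1/12+\epsilon)\subset\mathcal{DP}^1$, and $-K_X$ is ample, so Theorem \ref{thm:hackingunobstruct} applies: every $\bQ$-Gorenstein deformation of $X$ lifts to a $\bQ$-Gorenstein deformation of $(X,D)$. Equivalently, the forgetful morphism $\mathcal{DP}^1\to\mathcal{DP}$, $(X,D)\mapsto X$, is smooth at $(X,D)$. Combining with the first step (a stack that is smooth over a smooth base via a smooth morphism is smooth) yields that $\mathcal{DP}^1$ is smooth in a neighborhood of $\calR^0(1/12+\epsilon)$.

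For the final assertion, I would use Theorem \ref{thm:indp1}: the forgetful morphism restricts to an isomorphism between $\calR^0(1/12+\epsilon)$ and its image $\mathcal{V}\subset\mathcal{DP}$, with inverse the section sending $X$ to $(X,\mathscr{D})$, where $\mathscr{D}$ is the discriminant of the anticanonical pencil of $X$. The complement of $\mathcal{V}$ is the closed locus of surfaces carrying an isotrivial $j$-invariant $\infty$ component (the Type C and Type D surfaces of Theorem \ref{thm:dpinfinity}), so $\mathcal{V}$ is open in $\mathcal{DP}$; as $\mathcal{DP}$ is smooth along $\mathcal{V}$ by the first step, $\calR^0(1/12+\epsilon)\cong\mathcal{V}$ is smooth.

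Essentially all of the real work lives in the propositions cited above, so within this argument the points requiring the most care are the two hypothesis checks in the second step: that $-K_X$ is ample and that $H^1(X,\calO_X(D))=0$ hold uniformly over $\calR^0(1/12+\epsilon)$, including over the non-normal, index-$2$ Type B surfaces --- which is exactly what Theorem \ref{thm:dptypes}, Lemma \ref{lemma:gorenstein} and Lemma \ref{lem:h1} were set up to supply. One should also be mindful that for Type B the unobstructedness in Proposition \ref{prop:def2} is genuinely established on the canonical covering stack, where the obstruction group $T^2_{QG,X}$ vanishes only after the spectral-sequence computation and the identification of $\mathscr T^1_{QG,X}$ with $i_*\calO_{\bP^1}(1)$ plus a skyscraper carried out in that proof.
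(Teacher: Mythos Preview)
Your argument is correct and matches the paper's proof essentially line for line: invoke Propositions \ref{prop:defa} and \ref{prop:def2} for unobstructedness of $X$, then Lemma \ref{lem:h1} and Theorem \ref{thm:hackingunobstruct} for smoothness of the forgetful morphism, and finally Theorem \ref{thm:indp1} to identify $\calR^0(1/12+\epsilon)$ with a section over its (open, smooth) image. Your added remark that the image is open because its complement is the closed locus of Type C/D surfaces is a helpful justification that the paper leaves implicit.
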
 

\begin{proof} By Propositions \ref{prop:defa} and \ref{prop:def2}, the $\bQ$-Gorenstein deformations of $X$ are unobstructed. By Lemma \ref{lem:h1} and Theorem \ref{thm:hackingunobstruct}, the projection from $\mathcal{DP}^1$ given by $(X,D) \to X$ is a smooth morphism since given a $\bQ$-Gorenstein deformation of $X$, deformations of $D$ are unobstructed. This proves that $\mathcal{DP}^1$ is smooth in a neighborhood of $(X,D) \in \calR^0(1/12 + \epsilon) \subset \mathcal{DP}^1$. Finally, $\calR^0(1/12 + \epsilon)$ is a section of the projection $(X,D) \to X$ over its image by Theorem \ref{thm:indp1} so $\calR^0(1/12 + \epsilon)$ is smooth.  \end{proof} 

\section{Miranda's GIT construction of the moduli space of Weierstrass fibrations}\label{sec:GIT}

\subsection{Overview of Miranda's construction}
In \cite{mir}, Miranda uses GIT to construct a coarse moduli space of \emph{Weierstrass fibrations} (see Definition \ref{def:weierstrassmodel}). Recall these fibrations arise naturally as follows: let $\widetilde{p}: \widetilde{X} \to Y$ be a minimal elliptic surface with section $S$. One obtains a normal surface called a Weierstrass fibration $X \to Y$ by contracting each component of the fibers of $\widetilde{p}$ which do not meet the section $S$. In particular, this fibration has only rational double point singularities, and is uniquely determined by $\widetilde{X}$.

Let $\Gamma_n = \Gamma(\bP^1, \calO_{\bP^1}(n))$. The key point is that $X$ has a \emph{Weierstrass equation}, and as such $X$ can be realized as a divisor in a $\bP^2$-bundle over the base curve. Indeed, for the Weierstrass fibration of a rational elliptic surface, we think of $X$ as being the closed subscheme of $\bP(\calO_{\bP^1}(2) \oplus \calO_{\bP^1}(3) \oplus \calO_{\bP^1})$ defined by the equation $y^2z = x^3 + Axz^2 + Bz^3,$ where $A \in \Gamma_4$, $B \in \Gamma_6$,  and 
\begin{enumerate}
\item $4A(q)^3 + 27B(q)^2 = 0$ precisely at the (finitely many) singular fibers $X_q$, 
\item and for each $q \in \bP^1$ we have $v_q(A) \leq 3$ or $v_q(B) \leq 5$. 
\end{enumerate} 
In this case, constructing a GIT quotient for the moduli space of such surfaces is tantamount to classifying such forms $(A,B) \in \Gamma_4 \oplus \Gamma_6$ subject to these constraints.

Before recalling the geometric characterization of stability that comes from the GIT analysis (see \cite[Theorem 6.2, Section 8, and Section 9]{mir}), we set up some notation. Let $T \subset \Gamma_4 \oplus \Gamma_6$ be the open set of forms satisfying (1) and (2) above. By \cite[Proposition 2.7]{mir}, equivalence classes of Weierstrass fibrations over $\bP^1$ are in 1-1 correspondence with the orbits of $T / (k^* \times \mathrm{SL}(\Gamma_1))$. Let $V \cong T / k^*$. Miranda proves that $V$ is a parameter space for Weierstrass fibrations \cite[Proposition 3.2]{mir}, and so to construct the GIT quotient it suffices to consider $V / \mathrm{SL}(\Gamma_1)$.  We will denote the resulting (coarse) moduli space by $W$. First we recall the stable locus $W^{s}$ of $W$.

\begin{theorem}\cite[Theorem 6.2]{mir}\label{thm:mirstable} Let $r$ be a point of $W$ represented by the pair of forms $(A,B)$ and let $X$ be the rational Weierstrass fibration defined by $(A,B)$. Then $r$ is stable, i.e. $r \in W^{s}$ if and only if $X$ has smooth generic fiber and the associated elliptic surface $\widetilde{X}$ has only reduced fibers. \end{theorem}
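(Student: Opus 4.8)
This is Miranda's \cite[Theorem 6.2]{mir}; to prove it one runs the Hilbert--Mumford numerical criterion for the $\SL(\Gamma_1)$-action on $V \subset \bP(\Gamma_4 \oplus \Gamma_6)$, the weighted projective space on which $k^*$ acts on $(A,B)$ with weights $(4,6)$. First I would record that, by Mumford's criterion, a point $r$ represented by $(A,B) \in V$ lies in the stable locus $W^s$ precisely when $\mu((A,B),\lambda) > 0$ for every nontrivial one-parameter subgroup $\lambda$ of $\SL(\Gamma_1) \cong \SL_2$; since every primitive one-parameter subgroup is conjugate to $\lambda(t) = \mathrm{diag}(t,t^{-1})$ and $\mu$ is linear under powers, it suffices to treat this single $\lambda$ in suitable homogeneous coordinates $[x_0:x_1]$ on $\bP^1$, which fixes the two points $p = [0:1]$ and $p' = [1:0]$.

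Next I would carry out the weight computation. Writing $A = \sum_i a_i x_0^i x_1^{4-i}$ and $B = \sum_j b_j x_0^j x_1^{6-j}$, the coordinate $a_i$ has $\lambda$-weight $2i - 4$ and $b_j$ has $\lambda$-weight $2j-6$; normalizing each coordinate by its weight ($4$, resp. $6$) in the weighted projective space, the least normalized weight occurring in $A$ is $\tfrac{2v_p(A) - 4}{4}$ and the least in $B$ is $\tfrac{2v_p(B)-6}{6}$, so
$$
\mu((A,B),\lambda) \;=\; -\min\left\{\frac{v_p(A)}{2}-1,\ \frac{v_p(B)}{3}-1\right\}.
$$
Hence $\mu > 0$ if and only if $v_p(A) \le 1$ or $v_p(B) \le 2$, and the analogous inequality at $p'$ is forced by $\lambda^{-1}$. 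Letting $\lambda$ vary so that $p$ ranges over all of $\bP^1$, this gives: $(A,B)$ is stable if and only if for every $q \in \bP^1$ one has $v_q(A) \le 1$ or $v_q(B) \le 2$ (and the non-strict version $v_q(A) \le 2$ or $v_q(B) \le 3$ everywhere cuts out the semistable locus).

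It then remains to translate this numerical condition into geometry using the Kodaira classification of Weierstrass fibers (see \cite[Table 1]{calculations}). Under the minimality condition (2) defining $T$ (namely $v_q(A) \le 3$ or $v_q(B) \le 5$ at every $q$), the fiber of $\widetilde X$ over $q$ is non-reduced --- that is, of type $\mathrm{I}_n^*$ with $n \ge 0$, $\mathrm{II}^*$, $\mathrm{III}^*$, or $\mathrm{IV}^*$ --- exactly when $v_q(A) \ge 2$ and $v_q(B) \ge 3$; so the condition from the previous step holds at every $q$ if and only if $\widetilde X$ has only reduced fibers. Finally, if $\Delta = 4A^3 + 27B^2 \equiv 0$ then unique factorization forces $A = -3c^2$ and $B = 2c^3$ for some $c \in \Gamma_2$, and since $\deg c = 2$ this $c$ vanishes at some $q$, forcing $v_q(A) \ge 2$ and $v_q(B) \ge 3$; thus stability already implies $X$ has smooth generic fiber (and a non-minimal Weierstrass equation, with $v_q(A) \ge 4$ and $v_q(B) \ge 6$, is unstable as well, so condition (2) imposes nothing extra). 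Together these give both directions of the theorem.

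The step I expect to be the main obstacle is carrying out the numerical criterion correctly on the weighted projective space: the $(4,6)$-weighting of $(A,B)$ must be tracked throughout so that $\mu$ is computed with the right normalization, and the resulting inequalities must be matched precisely against the Kodaira fiber table, including the minimality and $\Delta \not\equiv 0$ edge cases. A secondary point --- and the reason $W^s$ can legitimately be singled out --- is to check that the $\SL(\Gamma_1)$-stabilizer of such an $(A,B)$ is finite, equivalently that a rational Weierstrass fibration with reduced fibers and smooth generic fiber has no positive-dimensional automorphisms acting nontrivially on the base, so that one is indeed in the properly stable locus over which $V \to W$ is a geometric quotient; this is dealt with in \cite{mir}.
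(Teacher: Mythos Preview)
The paper does not supply its own proof of this statement: it is quoted verbatim as \cite[Theorem~6.2]{mir} and used as a black box in the comparison with $\calR(1/12+\epsilon)$. Your proposal is a correct outline of Miranda's original argument via the Hilbert--Mumford criterion, with the weight computation, the translation through the Kodaira table, and the $\Delta\equiv 0$ edge case all handled appropriately; so there is nothing to compare against beyond noting that you have reconstructed the cited proof rather than one the authors provide.
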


Now we discuss the strictly semistable locus $W^{sss} := W \setminus W^s.$

\begin{theorem}\cite[Proposition 8.2 and Theorem 8.3]{mir}\label{thm:mirsss} Let $r$ be a point of $W$ represented by the pair $(A,B)$ and assume that the fibration $X$ defined by $(A,B)$ has smooth generic fiber. Then $r$ is a strictly semistable point, i.e. $r \in W^{sss}$ if and only if the associated elliptic surface $\widetilde{X}$ has a fiber of type $\mathrm{I}_N^*$ for some $N \geq 0$. 

Moreover, two pairs $(A_1, B_1)$ and $(A_2, B_2)$ yielding strictly semistable elliptic surfaces correspond to the same point in $W^{sss}$ if and only if the $j$-invariant of the $\mathrm{I}_N^*$ fibers are the same. \end{theorem}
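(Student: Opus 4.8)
The statement is Miranda's, and the plan is to recover it via the Hilbert--Mumford numerical criterion for the $\SL(\Gamma_1)$-action on $V$. First I would reduce to a one-parameter-subgroup computation: every $1$-PS of $\SL(\Gamma_1)\cong\SL_2$ is conjugate to a diagonal one, which fixes two points of $\bP^1$ and degenerates every other point onto one of them, say $q$; after acting by $\SL_2$ I may take $q=[0:1]$ with local coordinate $u=s/t$. Writing $A=\sum_i\alpha_is^it^{4-i}$ and $B=\sum_j\beta_js^jt^{6-j}$, this $1$-PS rescales the monomials $s^it^{4-i}$ and $s^jt^{6-j}$ with weights $2i-4$ and $2j-6$; combining this with the weights $(4,6)$ of the $k^*$ used to pass from $\Gamma_4\oplus\Gamma_6$ to $V$, and with Miranda's linearization, the Mumford numerical invariant of $[(A,B)]$ against this $1$-PS should come out, up to a fixed positive factor, to $\min\bigl(3\,v_q(A),\,2\,v_q(B)\bigr)-6$, where $v_q$ denotes order of vanishing at $q$.

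Granting this, I would read off stability and translate to fiber types. Semistability of $[(A,B)]$ means the invariant is $\le 0$ against every $1$-PS, i.e.\ no point $q$ has $v_q(A)\ge 3$ and $v_q(B)\ge 4$; stability means the invariant is $<0$ against every $1$-PS (plus finiteness of stabilizers, which I would check separately), i.e.\ no point $q$ has $v_q(A)\ge 2$ and $v_q(B)\ge 3$. By the Kodaira--N\'eron classification, together with the minimality hypothesis $v_q(A)\le 3$ or $v_q(B)\le 5$ built into $T$, the condition $v_q(A)\ge 2$, $v_q(B)\ge 3$ holds exactly when the fiber over $q$ is non-reduced, and among these the condition $v_q(A)\ge 3$, $v_q(B)\ge 4$ holds exactly for the types $\mathrm{II}^*$, $\mathrm{III}^*$, $\mathrm{IV}^*$. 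So, under the standing hypothesis $\mathscr D=4A^3+27B^2\not\equiv 0$: $[(A,B)]$ is stable iff all fibers are reduced (this is Theorem \ref{thm:mirstable}), and it is strictly semistable iff it is semistable and some fiber is non-reduced, which by the above forces that fiber to be $\mathrm{I}_N^*$ for some $N\ge 0$. Conversely an $\mathrm{I}_N^*$ fiber contributes $6+N$ to the degree-$12$ discriminant, leaving too little for a $\mathrm{II}^*,\mathrm{III}^*$ or $\mathrm{IV}^*$ fiber, so semistability is automatic. This gives the first assertion.

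For the ``moreover'' I would compute the $S$-equivalence relation explicitly. If $X$ has an $\mathrm{I}_N^*$ fiber over $q$ then $3v_q(A)=2v_q(B)=6$, the $1$-PS $\lambda$ concentrating at $q$ achieves the zero of the numerical invariant, and $\lim_{\tau\to 0}\lambda(\tau)\cdot[(A,B)]$ should be the point $[(\alpha\,g^2,\,\beta\,g^3)]$, where $g\in\Gamma_2$ has the two fixed points of $\lambda$ as its roots and $(\alpha,\beta)$ are the leading Weierstrass coefficients of $X$ at $q$ --- i.e.\ the isotrivial fibration with generic fiber $y^2=x^3+\alpha x+\beta$. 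I would then check that the $\SL_2$-orbit of this point is closed in the semistable locus of $V$ (its only further degenerations are of the form $(l^4,l^6)$ for a linear form $l$, which are unstable), so it is the unique closed orbit in the orbit-closure of $[(A,B)]$ and hence represents the image of $[(A,B)]$ in $W^{sss}$. Finally, since the rescaling of $(\alpha,\beta)$ forced when $\SL_2$ moves $g$ is absorbed by the $k^*$, the orbit of $[(\alpha g^2,\beta g^3)]$ depends only on $(\alpha:\beta)\in\mathbb{P}(2,3)\cong\bP^1$, equivalently on the $j$-invariant of $y^2=x^3+\alpha x+\beta$ --- which is precisely the $j$-invariant of the $\mathrm{I}_N^*$ fiber (finite when $N=0$; equal to $\infty$ when $N\ge 1$, where $4\alpha^3+27\beta^2=0$). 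Hence two strictly semistable pairs give the same point of $W^{sss}$ iff these $j$-invariants agree.

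I expect the main obstacle to be the bookkeeping of the first step --- pinning down Miranda's linearization precisely and extracting the numerical function $\min(3v_q(A),2v_q(B))-6$ in a form usable for both semistability and strict semistability --- together with, for the last step, the verification that the isotrivial orbit above is genuinely closed in the semistable locus of $V$ and that the $\SL_2\times k^*$-invariant separating such orbits really is the $j$-invariant of the $\mathrm{I}_N^*$ fiber. The translation between vanishing orders and Kodaira types via Tate's algorithm is routine, but it must be combined correctly with the minimality constraint so as to exclude non-minimal Weierstrass equations.
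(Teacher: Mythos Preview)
The paper does not prove this theorem; it is quoted verbatim from Miranda \cite[Proposition~8.2 and Theorem~8.3]{mir} and used as a black box in the proof of Theorem~\ref{thm:egit}. So there is no ``paper's own proof'' to compare against.

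Your sketch is essentially Miranda's original argument and is sound. The Hilbert--Mumford computation you outline is correct: after normalizing by the $k^*$-weights $(4,6)$, the numerical function against the diagonal $1$-PS concentrating at $q$ is (a positive multiple of) $\min(3v_q(A),2v_q(B))-6$, giving semistability $\Leftrightarrow$ no $q$ with $(v_q(A),v_q(B))\ge(3,4)$ and stability $\Leftrightarrow$ no $q$ with $(v_q(A),v_q(B))\ge(2,3)$. Your translation via Tate's algorithm to ``non-reduced fiber'' versus ``$\mathrm{II}^*,\mathrm{III}^*,\mathrm{IV}^*$ fiber'' is right, and the degree-$12$ bookkeeping correctly rules out an $\mathrm{I}_N^*$ coexisting with a $\mathrm{II}^*,\mathrm{III}^*,\mathrm{IV}^*$. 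For the $S$-equivalence part, your identification of the closed orbit as the isotrivial $[(\alpha g^2,\beta g^3)]$ and of the separating invariant as $j(y^2=x^3+\alpha x+\beta)$ is exactly Miranda's Theorem~8.3; note that when $N\ge 1$ the limit has $\mathscr{D}\equiv 0$ and so lies outside $T$, but it is still semistable in the ambient weighted projective space (since $v_q(A)\le 2$, $v_q(B)\le 3$ everywhere), which is what matters for closedness of the orbit. The only places to be careful are the ones you already flag: fixing Miranda's linearization precisely (he spells this out in \cite[\S5]{mir}) and verifying that the isotrivial orbit is genuinely minimal---both are routine once the numerical function is in hand.
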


We remark that this theorem tells us even more (c.f. \cite[Pg. 390]{mir}). Surfaces with a fiber of type $\mathrm{I}_0^*$ are classified in $W^{sss}$ by their $j$-invariant, i.e. there is an $\bA^1 \subset W^{sss}$ parametrizing such surfaces. Surfaces with a fiber of type $\mathrm{I}_N^*$ where $N \geq 1$ are all mapped to a point, as they all have $j$-invariant $\infty$. Finally, if the Weierstrass fibration has no smooth fiber, then it also gets mapped to this $\infty$ point. This allows us to stratify $W$ as follows: $$W = W^{s} \cup \bA^1 \cup \{\infty\}.$$

\subsection{Relation between $W$ and $\cEe$}

We now compare $W^{GIT}$ to $\cEe$, the KSBA compactification of the moduli space of rational elliptic surfaces with twelve $\mathrm{I}_1$ fibers marked with weight $1/12 + \epsilon$. Before proving our main result, we state a crucial lemma from \cite{gg}.

\begin{definition} Let $(A, \mathfrak{m})$ be a DVR with residue field $k$ and fraction field $K$, and let $Y$ be a proper scheme. By the valuative criterion, any map $g: \Spec K \to Y$ extends to a map $\overline{g}: \Spec A \to Y$. We write $\mathrm{lim}(g)$ for the point $\overline{g}(\mathfrak{m}) \in Y$. \end{definition}

\begin{theorem}\label{thm:gg}\cite[Theorem 7.3]{gg} Suppose $X_1$ and $X_2$ are proper schemes over a noetherian scheme $S$ with $X_1$ normal. Let $U \subseteq X_1$ be an open dense set and $f: U \to X_2$ an $S-$morphism. Then $f$ extends to an $S$-morphism $\overline{f}: X_1 \to X_2$ if and only if for any DVR $(A, \mathfrak{m})$ as above and any morphism $g: \Spec K \to U$, the point $\mathrm{lim}(fg)$ of $X_2$ is uniquely determined by the point $\mathrm{lim}(g)$ of $X_1$. \end{theorem}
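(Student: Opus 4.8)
The plan is to treat the two implications of the biconditional separately, since one is a formality and the other carries all the content. For the forward implication: if $\overline{f}\colon X_1\to X_2$ extends $f$, then for any DVR $(A,\mathfrak{m})$ with fraction field $K$ and any $g\colon \Spec K\to U$, the valuative criterion of properness for $X_1/S$ produces a unique $\overline{g}\colon \Spec A\to X_1$ with $\overline{g}(\mathfrak{m})=\mathrm{lim}(g)$, and by uniqueness of extensions into the separated scheme $X_2$ one has $\overline{fg}=\overline{f}\circ\overline{g}$; hence $\mathrm{lim}(fg)=\overline{f}(\mathrm{lim}(g))$, which visibly depends only on $\mathrm{lim}(g)$.

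For the converse I would resolve the indeterminacy of $f$ on a graph closure. Since $X_1$ is normal it is a finite disjoint union of integral normal schemes, so one may assume $X_1$ integral with $U$ dense. Let $\Gamma\subseteq X_1\times_S X_2$ be the scheme-theoretic closure of the graph of $f\colon U\to X_2$, with projections $p_1\colon\Gamma\to X_1$ and $p_2\colon\Gamma\to X_2$; note $\Gamma$ is integral (closure of an integral subscheme) and $p_1^{-1}(U)$ equals the graph and is dense. As $X_2\to S$ is proper, $X_1\times_S X_2\to X_1$ is proper, so $p_1$ is proper; it is an isomorphism over $U$, hence birational. The goal is to upgrade this to an isomorphism, since then $\overline{f}:=p_2\circ p_1^{-1}$ is the desired extension. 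By Zariski's main theorem it is enough to show $p_1$ is quasi-finite, because a proper quasi-finite morphism is finite and a finite birational morphism onto an integral normal scheme is an isomorphism.

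To prove quasi-finiteness, suppose some fibre $p_1^{-1}(x)$ has positive dimension; being proper over $\kappa(x)$ it then contains an integral curve $C$. If $p_2$ were constant on $C$, then $C$ would map to a single point of $X_1\times_S X_2$ under the closed immersion $\Gamma\hookrightarrow X_1\times_S X_2$, impossible as $\dim C=1$; hence there are closed points $y_0,y_1\in C$ with $p_2(y_0)\ne p_2(y_1)$. Using that $\Gamma$ is integral, of finite type over $S$, with the dense graph locus $p_1^{-1}(U)$, through each $y_i$ one finds a trait $t_i\colon\Spec A_i\to\Gamma$, with $A_i$ a DVR, whose generic point maps into $p_1^{-1}(U)$ and whose closed point is $y_i$ (take a curve through $y_i$ meeting the graph locus, normalize, and localize). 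Restricting $p_1\circ t_i$ and $p_2\circ t_i$ to generic points gives $g_i\colon\Spec K_i\to U$ with $\mathrm{lim}(g_i)=p_1(y_i)=x$ and $\mathrm{lim}(fg_i)=p_2(y_i)$, and since $p_2(y_0)\ne p_2(y_1)$ while $\mathrm{lim}(g_0)=\mathrm{lim}(g_1)=x$, this contradicts the hypothesis. Therefore $p_1$ is quasi-finite and we are done.

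The step I expect to need the most care is the construction of enough DVR traits through an arbitrary point of $\Gamma$ meeting the dense graph locus: this is exactly where the noetherian and finite-type hypotheses are genuinely used, and it is standard for schemes of finite type over a field — the situation of interest here, where all spaces in sight are varieties over $\C$ — by blowing up and normalizing. Everything else, namely the reduction to the integral case and the Zariski's main theorem endgame, is routine; if one wished to avoid the trait construction on $\Gamma$ directly, one could instead run the argument on the normalization $\widetilde{\Gamma}$ and apply the valuative criterion of properness to $\widetilde{\Gamma}\to X_1$.
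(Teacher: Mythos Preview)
The paper does not actually supply a proof of this statement: it is quoted verbatim as \cite[Theorem~7.3]{gg} and used as a black box in the proofs of Theorem~\ref{thm:egit} and Corollary~\ref{cor:gitbasecurve}. So there is no ``paper's own proof'' to compare against.

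That said, your argument is the standard and correct one. The forward direction is immediate from separatedness of $X_2$. For the converse, passing to the closure $\Gamma$ of the graph, noting $p_1\colon\Gamma\to X_1$ is proper birational, and reducing via Zariski's Main Theorem to quasi-finiteness of $p_1$ is exactly the right strategy; the contradiction you derive from a positive-dimensional fibre is clean. Your only honest caveat --- producing DVR traits through an arbitrary point of $\Gamma$ meeting the dense graph locus --- is indeed the one place requiring the noetherian/finite-type hypotheses, and your suggested workaround (take a curve through the point meeting the open set, normalize, localize at a closed point over $y_i$) is the usual one and works in the setting of the paper where everything is of finite type over $\mathbb{C}$. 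One small remark: in the general noetherian setting the existence of such traits is a bit more delicate than ``blow up and normalize,'' but for the applications in this paper your level of detail is entirely adequate.
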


\begin{theorem}\label{thm:egit} Let $R = R(1/12 + \epsilon)$ be the coarse moduli space of $\calR(1/12 + \epsilon)$ and $\Delta \subset R$ the boundary divisor parametrizing non-normal surfaces with $U = R \setminus \Delta$. There is a morphism $R \to W$ to Miranda's GIT compactification such that the following diagram commutes.
$$
\begin{tikzcd}
\Delta \arrow[r,hook] \arrow{d}[swap]{j} & R \arrow[d] & U \arrow[d,"\cong"] \arrow[l,hook'] \\ 
\mb{P}^1 \arrow[r] & W & W^s \arrow[l,hook']
\end{tikzcd}
$$
Here $\Delta \to \mb{P}^1$ sends the surface $X \cup Y$ to the $j$-invariant of the double locus, $\mb{P}^1 \to W^{sss} \subset W$ maps bijectively onto the strictly semistable locus, and $U \to W^s$ is an isomorphism.
\end{theorem}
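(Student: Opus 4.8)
The plan is to build the morphism first on the interior $U$, where every surface carries a canonical Weierstrass model, then to extend it across $\Delta$ via the extension criterion of Theorem~\ref{thm:gg}, and finally to read off the behaviour on $\Delta$ from the stable reduction computed in Sections~\ref{sec:walls}--\ref{sec:dp}. On $U$: by Theorem~\ref{thm:dptypes}(1) a surface parametrized by $U$ is a normal degree one del Pezzo with canonical (hence Gorenstein ADE) singularities, so its associated rational elliptic surface is relatively minimal with only reduced Kodaira fibres of types $\mathrm I_n,\mathrm{II},\mathrm{III},\mathrm{IV}$; contracting the section produces a rational Weierstrass fibration with data $(A,B)\in\Gamma_4\oplus\Gamma_6$, well defined up to $k^\ast\times\SL(\Gamma_1)$, and performing this in families gives a morphism $f\colon U\to W$. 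By Miranda's Theorem~\ref{thm:mirstable} its image is precisely $W^s$ and $f$ is bijective onto $W^s$; since $W^s$ is normal, being a geometric GIT quotient of the parameter space $V$, and $f$ is birational — it restricts to the identity on the locus of smooth surfaces with twelve $\mathrm I_1$ fibres marked by the discriminant — Zariski's main theorem upgrades $f$ to an isomorphism $U\xrightarrow{\ \sim\ }W^s$.

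To extend $f$ to all of $R$: the stack $\calR(1/12+\epsilon)$ is built as a normalization (cf.\ the remark after Theorem~\ref{thm:stack}), so $R$ is normal, and $W$ is proper, so Theorem~\ref{thm:gg} reduces the extension to checking that for every DVR $(\calO,\mathfrak m)$ with fraction field $K$ and every $g\colon\Spec K\to U$, the limit $\lim(fg)\in W$ depends only on $\lim(g)\in R$. Such a $g$ is a one-parameter family $\mathscr X_K\to\Spec K$ of interior del Pezzos; if $\mathscr X_0:=\lim(g)$ lies in $U$, then $\lim(fg)=f(\mathscr X_0)$ by separatedness of $W$, so assume $\mathscr X_0\in\Delta$, say $\mathscr X_0=X\cup Y$ of Type B, C or D (Theorems~\ref{thm:dptypes}--\ref{thm:dpinfinity}) with double locus of $j$-invariant $j(\mathscr X_0)$. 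Passing to the family of Weierstrass models over $\bP^1_K$ with data $(A_K,B_K)$, clearing denominators, and possibly a base change $t\mapsto t^d$, we obtain $(A_\calO,B_\calO)$ whose closed fibre defines a Weierstrass fibration $\mathscr W_0\to\bP^1$, and $\lim(fg)$ is the image in $W$ of the semistabilization of $(A_0,B_0)$. This point cannot lie in $W^s$: if it did, Theorem~\ref{thm:mirstable} would force $\mathscr W_0$ to have reduced fibres, hence be a RDP Weierstrass fibration, and contracting its section would give a \emph{normal} stable del Pezzo which — by separatedness of $\calR$, and since the stable limit is insensitive to the base change — must equal $\mathscr X_0$, contradicting $\mathscr X_0\in\Delta$. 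So $\lim(fg)\in W^{sss}$, which by Theorem~\ref{thm:mirsss} and the discussion after it is identified with $\overline{\calM}_{1,1}$ via the $j$-invariant of the $\mathrm I_N^\ast$ fibre (finite $j$ for $\mathrm I_0^\ast$, and the point $\infty$ both when $N\ge 1$ and when $\mathscr W_0$ has no smooth fibre). Tracking the $j$-map $\bP^1\to\overline{\calM}_{1,1}$ of $\mathscr W_0$ against the twisted-stable-maps description of $\mathscr X_0$ (Section~\ref{tsmconditions}) — in which the base of $X\cup Y$ is the semistable model $C_0\cup C_1$ of $\bP^1$, the node maps to the point supporting the $\mathrm I_N^\ast$ fibre, and $j(\mathscr X_0)$ is by definition the value of the limiting $j$-map at that node, unaffected by $t\mapsto t^d$ — shows that $\lim(fg)$ corresponds to $j(\mathscr X_0)$, hence is determined by $\lim(g)$. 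Theorem~\ref{thm:gg} thus produces the extension $\overline f\colon R\to W$.

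It remains to assemble the diagram. By the first paragraph, $\overline f|_U\colon U\to W^s$ is the asserted isomorphism. The computation above shows that $\overline f\circ(\Delta\hookrightarrow R)$ factors as $\Delta\xrightarrow{\ j\ }\bP^1\cong\overline{\calM}_{1,1}\xrightarrow{\ \sim\ }W^{sss}\hookrightarrow W$, where the last isomorphism is the identification of Theorem~\ref{thm:mirsss}; and $\Delta\to\bP^1$ is surjective — so this composite carries $\bP^1$ bijectively onto $W^{sss}$ — because every $\mathrm I_0^\ast$ (resp.\ $\mathrm I_1^\ast$) Weierstrass fibration admits a one-parameter smoothing whose KSBA limit in the proper stack $\calR$ lies in $\Delta$ with the prescribed $j$-invariant. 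Together with the commutativity already established over $U$ and over $\Delta$, this gives the required square.

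The hard part is the boundary step inside the DVR argument: reconciling the \emph{naive} Weierstrass limit over the fixed base $\bP^1$ — which is what computes Miranda's GIT limit — with the KSBA limit $X\cup Y$, which is obtained only after a base-curve degeneration and the relative MMP of Section~\ref{sec:walls}, and in particular verifying that the $\mathrm I_N^\ast$ fibre of $\mathscr W_0$ carries exactly the $j$-invariant recorded on the double locus of $X\cup Y$. This is where the balancing conditions of Section~\ref{tsmconditions} and the fibre-transition analysis of Theorem~\ref{thm:transitions} must be combined carefully; the isotrivial $j=\infty$ surfaces (Types C and D), where $\mathscr W_0$ may have no smooth fibre at all, require separate bookkeeping but land uniformly at $\infty\in W^{sss}$.
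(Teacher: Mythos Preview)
Your proof follows the same route as the paper's: build the map on $U$ from Weierstrass data and extend across $\Delta$ via Theorem~\ref{thm:gg}, using Theorem~\ref{thm:mirsss} to pin the limit in $W^{sss}$ to the $j$-invariant of the double locus. The paper constructs the interior morphism more carefully, via a $\mathrm{PGL}_2$-torsor $\calF=\{(X,s,t):s,t\text{ span }H^0(-K_X)\}\to\calU$ mapping equivariantly to $V^s$, whereas you assert the family version directly; its boundary step is also terser and does not separately argue $\lim(fg)\notin W^s$. One small slip in your version: if $\lim(fg)\in W^s$, it is the \emph{semistable replacement} of $(A_0,B_0)$, not your naive $\mathscr W_0$, that Theorem~\ref{thm:mirstable} forces to have reduced fibres --- so either arrange $(A_0,B_0)$ semistable from the outset (which requires acting by a one-parameter subgroup of $\SL(\Gamma_1)$, not just clearing denominators and base change) or run the separatedness-of-$\calR$ argument directly against the stable replacement surface.
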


\begin{proof}

Let $\calU \subset \cEe$ be the open locus of normal surfaces, i.e. smooth surfaces in $\cEe$ and surfaces of Type A. Consider the $\mathrm{PGL}_2$-torsor $\calF$:
$$ \calF = \{(X, s, t) \mid X \in \calU, (s,t) \in H^0(-K_X) \textrm{ where } s,t \textrm{ span } H^0(-K_X) \}/\sim,$$  where we quotient by scaling. The image of $|-K_X|$ is a $\bP^1$ with coordinates $(s,t)$, and the linear series $|-K_X|$ induces the elliptic fibration: the blowup of its base point gives an elliptic fibration (with section), and thus a Weierstrass equation in coordinates $s$ and $t$. In particular, this Weierstrass coefficients $(A,B)$ are unique up to the scaling of the $\G_m$ action $(A,B) \mapsto (\lambda^4A, \lambda^6B)$.

Furthermore by Theorem \ref{thm:mirstable} and the characterization of surfaces of Type A (Theorem \ref{thm:dptypes}), the forms $(A,B)$ are contained in the stable locus $V^s \subset V$. Therefore, we obtain a $\mathrm{PGL}_2$-equivariant morphism $\calF \to V^s$ which induces a morphism $\phi: \calU \to W$. By comparing the characterization of the type $A$ surfaces parametrized $\calU$ and Miranda's stable surfaces, we see that $\calU \to W$ must be an isomorphism onto the stable locus.

Now suppose that $X$ is a surface parametrized by the boundary $\Delta$ and suppose $\mathscr{X} \to B$ is a 1-parameter family so that $\mathscr{X_b} \in \calU$ for $b \neq 0$ and $\mathscr{X}_0 = X$. Then by Theorem \ref{thm:gg}, to exhibit the existence of a morphism $R \to W$, it suffices to show that $\lim_{b \to 0} (\mathscr{X}_b)$ depends only on $X$ and not on the choice of family. However this follows by Theorem \ref{thm:mirsss}:  the surface $X$ contains a fiber of type $\mathrm{I}_0^*$, namely the gluing fiber, and so the family of Weierstrass data $(A_b,B_b)$ corresponding to $\mathscr{X} \to B$ limits to the unique point $j(I_0^*) \in W^{sss}$ which is well defined since the $j$-invariant of the attaching fiber is the same on each component of $X$. Therefore, the morphism $\phi$ extends to a morphism on all of $R$ and we obtain the desired morphism $R \to W$. Commutativity of the diagram above follows by construction. \end{proof} 
 
 \begin{remark}\label{rem:weierstrass} Given Weierstrass data $(A,B)$, we can consider the discriminant $\mathscr{D} \in \Gamma_{12}$. If $\calD^*$ is the GIT quotient of the space of degree $12$ forms on $\mb{P}^1$ by automorphisms of $\mb{P}^1$, then it is natural to ask if $(A,B) \mapsto \mathscr{D}$ induces a morphism $W \to \calD^*$. There is clearly a rational map $W \dashrightarrow \calD^*$ but this map \emph{cannot} extend. Indeed $W$ parametrizes surfaces with $\mathrm{I}_n$ fibers for $n > 6$ which have discriminant vanishing to order $n > 6$. Such a discriminant is GIT unstable. However, we will see below that the space $R(1/6)$ resolves this rational map. \end{remark} 
 
 \begin{cor}\label{cor:gitbasecurve}  There are morphisms $R(1/6) \to W$ and $R(1/6) \to \calD^*$ where $\calD^*$ is the GIT moduli space for $12$ points in $\mb{P}^1$. Furthermore, the diagram 
 \[
  \begin{tikzcd}
    & R(1/6) \ar{rd} \ar{ld} & \\
    W \ar[rr, dashrightarrow] & & \calD^*
  \end{tikzcd}
  \]
  commutes where $W \dashrightarrow \calD^*$ is the rational map induced by $(A,B) \mapsto \mathscr{D} = 4A^3 + 27B^2$
  \end{cor}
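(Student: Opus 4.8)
The plan is to deduce Corollary \ref{cor:gitbasecurve} from Theorem \ref{thm:egit} together with the forgetful/reduction morphisms $\calR(1/6) \to \calR(1/12+\epsilon)$ of Theorem \ref{thm:main} and $\calR(1/6) \to \overline{\calM}_{0,\calA}$ of Theorem \ref{thm:forgetful}, and to invoke the extension criterion Theorem \ref{thm:gg} to fill in the one missing morphism. First I would produce the map $R(1/6) \to W$: since $1/6$ lies in a higher chamber than $1/12+\epsilon$, Theorem \ref{thm:main}(2) gives a reduction morphism $\calR(1/6) \to \calR(1/12+\epsilon)$ on coarse spaces $R(1/6) \to R$, and composing with $R \to W$ from Theorem \ref{thm:egit} yields the left arrow. (Alternatively, one runs exactly the argument of Theorem \ref{thm:egit} directly at weight $1/6$: over the normal locus the anticanonical data gives Weierstrass coefficients $(A,B)\in V^s$, hence a $\mathrm{PGL}_2$-equivariant map to $V^s$ descending to $W$, and the only boundary surfaces have an $\mathrm{I}_0^*$ gluing fiber so the GIT limit is the well-defined point $j(\mathrm{I}_0^*)\in W^{sss}$ by Theorem \ref{thm:mirsss}; the valuative criterion Theorem \ref{thm:gg} then extends across $\Delta$.)

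Next I would produce the map $R(1/6) \to \calD^*$. Here the key geometric input is Remark \ref{rem:disc}: the universal family over $\calE^s_{1,\calA}$ carries, over the locus where the log canonical model has all $\mathrm{I}_1$ fibers, a marking by the $\calA$-weighted discriminant, and the forgetful morphism of Theorem \ref{thm:forgetful} sends a pair $(f:X\to C, S+F_\calA)$ to the base curve $C$ marked by the weighted discriminant $\mathscr{D}$. Passing to the $S_{12}$-quotient and to coarse spaces, Theorem \ref{thm:forgetful} gives a morphism $R(1/6) \to \overline{M}_{0,1/6}$, where $\overline{M}_{0,1/6}$ is the coarse space of Hassett's moduli of $12$ points on a genus zero curve each weighted by $1/6$. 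Since $12 \cdot \tfrac16 = 2$, no single point may have multiplicity $\geq 7$ on a $\bP^1$ and every chain of components is short — by Lemma \ref{lem:stablecurves} the underlying curve is either a smooth $\bP^1$ or a chain of two $\bP^1$'s, each with $6$ marked points — so there is a natural birational morphism $\overline{M}_{0,1/6} \to \calD^*$ to the GIT quotient of degree $12$ forms on $\bP^1$ (this is the standard comparison between Hassett compactifications and GIT for points on $\bP^1$; on the smooth-curve locus it is the identity, and the two-component curves get sent to the strictly semistable $6+6$ configurations). Composing yields the right arrow $R(1/6) \to \calD^*$.

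Finally I would check that the triangle commutes. Over the dense open $U = R\setminus\Delta$ parametrizing normal surfaces, both composites agree by construction: $R(1/6)\to W$ restricted there is the isomorphism onto $W^s$ sending $X$ to the point represented by $(A,B)$, the rational map $W\dashrightarrow\calD^*$ is defined on $W^s$ and sends $(A,B)$ to the class of $\mathscr{D}=4A^3+27B^2$, while $R(1/6)\to\calD^*$ sends $X$ to the class of the discriminant of its anticanonical pencil, which is exactly that same form. So the two morphisms $R(1/6)\to\calD^*$ agree on a dense open of the (normal, since $R(1/6)$ is a coarse space of a normal Deligne--Mumford stack) source; since $\calD^*$ is separated, they agree everywhere, and $W\dashrightarrow\calD^*$ is resolved by $R(1/6)$.

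I expect the main obstacle to be the second paragraph — pinning down the morphism $\overline{M}_{0,1/6}\to\calD^*$ and verifying it is genuinely a morphism (not just a rational map) on the boundary. The subtlety is that a weight-$1/6$ Hassett-stable pointed curve allows a point of multiplicity up to $5$ on a smooth $\bP^1$, whereas such a configuration is GIT \emph{semistable} but possibly strictly so, and the two-component curves must be shown to map to the correct polystable orbit representatives; here one uses that the attaching fiber of the broken elliptic surface is an $\mathrm{I}_0^*$, i.e. the discriminant splits as $6+6$ symmetric halves, matching the strictly semistable $\mathrm{SL}_2$-orbits in $\calD^*$. Once the existence of this morphism is granted (it is essentially the $n=12$, weight $1/6$ case of the general Hassett-to-GIT comparison for points on a line), the commutativity check is the routine dense-open argument above.
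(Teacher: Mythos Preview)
Your construction of $R(1/6)\to W$ in the first paragraph is correct and identical to the paper's: compose the reduction morphism $R(1/6)\to R(1/12+\epsilon)$ with Theorem~\ref{thm:egit}.

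The second paragraph, however, has a genuine gap, and it is not the one you flag at the end. You invoke the forgetful morphism of Theorem~\ref{thm:forgetful} \emph{at} $a=1/6$, landing in ``$\overline{M}_{0,1/6}$'', and then quote Lemma~\ref{lem:stablecurves}. But Lemma~\ref{lem:stablecurves} is stated for $a=1/6+\epsilon$, and more seriously, the Hassett space with twelve weights equal to $1/6$ on a genus-zero curve is not a moduli space at all: the total weight is exactly $2$, so $K_C+\sum a_ip_i$ has degree zero and is never ample. Proposition~\ref{prop:typeii6} records this explicitly: at $a=1/6$ ``the Hassett moduli space becomes a point.'' Thus the composite $R(1/6)\to\overline{M}_{0,1/6}\to\calD^*$ you propose is the constant map, not the discriminant map. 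The obstacle is not the boundary of a putative Hassett-to-GIT morphism; it is that the domain of that morphism has collapsed.

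The paper avoids this by working one chamber up. It first builds $R(1/6+\epsilon)\to\calD^*$ via Theorem~\ref{thm:forgetful} and Hassett's comparison $\overline{M}_{0,1/6+\epsilon}\to\calD^*$ (which \emph{is} defined since the total weight now exceeds $2$). It then uses the extension criterion Theorem~\ref{thm:gg} to factor this through the reduction $R(1/6+\epsilon)\to R(1/6)$. The nontrivial content here is checking that fibres of $R(1/6+\epsilon)\to R(1/6)$ land in a single point of $\calD^*$: by Theorem~\ref{thm:jinfwalls} the only $\wiii$ wall at $a=1/6$ contracts a trivial $j=\infty$ component glued along an $\mathrm{I}_6$ fiber, forcing the base curve to be two components with six marks each, hence mapping to the unique strictly semistable orbit of $\calD^*$; and contractions of Type~I pseudoelliptic trees do not affect the discriminant of the main component(s). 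Your argument is missing this descent step entirely.

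Your commutativity argument (agreement on the dense open of normal surfaces, plus separatedness of $\calD^*$) is fine in principle and is what the paper does, but it only applies once the map $R(1/6)\to\calD^*$ has actually been constructed.
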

 
 \begin{proof} There is a morphism $R(1/6) \to W$ induced by composing the morphism in Theorem \ref{thm:egit} with the reduction morphism $R(1/6) \to R(1/12 + \epsilon)$ (Theorem \ref{thm:main}). By Theorem \ref{thm:forgetful} there is a morphism $\calR(1/6 + \epsilon) \to \calM_{1/6 + \epsilon}/S_{12}$ which sends a $(1/6 + \epsilon)$-weighted stable rational elliptic surface marked with its singular fibers to the $(1/6 + \epsilon)$-weighted $12$-pointed stable curve marked by the discriminant of the elliptic fibration. By \cite[Section 8]{has}, there is a morphism $\calM_{1/6 + \epsilon}/S_{12} \to \calD^*$ which induces $R(1/6 + \epsilon) \to \calD^*$ by composing and taking coarse moduli space.
 
 To obtain the factorization $R(1/6 + \epsilon) \to R(1/6) \to \calD^*$, it suffices by Theorem \ref{thm:gg} to show that the image of a point under $R(1/6 + \epsilon) \to \calD^*$ depends only on the image of that point in $R(1/6)$. Said another way, we must show that given a $(1/6 + \epsilon)$-weighted broken rational elliptic surface pair, the equivalence class in the GIT moduli space of its discriminant only depends on the $1/6$-weighted stable replacement of the surface.
 
 This is clear on the locus where $R(1/6 + \epsilon) \to R(1/6)$ is an isomorphism. The morphism $R(1/6 + \epsilon) \to R(1/6)$ causes the base curve to contract, and this is an isomorphism on moduli spaces away from the contraction of a trivial $j$-invariant $\infty$ component. By Theorem \ref{thm:jinfwalls}, the only $\wiii$ wall occurring at $a = 1/6$ comes from contracting an isotrivial component glued along an $\mathrm{I}_6$ fiber. In this case the base curve of the corresponding surface parametrized by $R(1/6 + \epsilon)$ had to have two components, each with six marked points. Therefore any surface of this form gets mapped to the unique minimal strictly semistable orbit of $\calD^*$, which arises precisely from two points each of multiplicity six, and so does not depend on the choice of surface. 
 
There also may be the contraction of pseudoelliptic trees of type I to points. However, the discriminant depends only on the main component(s), and not on the pseudoelliptic trees. Indeed since the main components survive under the reduction morphism $R(1/6 + \epsilon) \to R(1/6)$, we see that the stable replacement inside $\calD^*$ only depends on the image of the corresponding point in $R(1/6)$.

Lastly, commutativity is immediate by construction.  \end{proof}

\section{Heckman-Looijenga's compactification} \label{sec:hl}
Recall that to a rational elliptic Weierstrass fibration we can associate its \emph{discriminant divisor} $\mathscr{D}$ which is described in the previous section in terms of Weierstrass equation. Equivalently, for a smooth minimal elliptic surface, $\mathscr{D}$ is given by assigning to any point on the base curve the Euler characteristic of its fiber, and yields an effective divisor of degree 12. When the discriminant is reduced, there are 12 singular fibers of type $\mathrm{I}_1$ -- in this case the projective equivalence class of the discriminant divisor determines the surface up to isomorphism.

As we saw in Section \ref{sec:GIT}, Miranda constructed a compactification  $W$ via geometric invariant theory. Alternatively, we can describe a compactification using the fact that the discriminant can be used to classify generic elliptic surfaces. Using this approach, Heckman and Looijenga showed that the moduli space of rational elliptic surfaces can be interpreted as a locally complex hyperbolic variety,  and studied its Satake-Baily-Borel compactification (see \cite{hl} and \cite[Section 7]{loo}).

Recall that for the GIT compactification $\calD^*$ of the space of 12 points in $\bP^1$ up to automorphism, a collection of points is stable (resp. semistable) if there are no points of multiplicity $\geq 6$ (resp. $\geq 7$). Let $\mathscr{M}$ denote the moduli space of rational elliptic surfaces with \emph{reduced discriminant}, and let $\calD \subset \calD^*$ denote the $\SL_2$ orbit space of 12 element subsets of $\bP^1$. Taking the discriminant of a generic elliptic surface yields a closed embedding $\mathscr{M} \hookrightarrow \calD$ (see \cite[Proposition 2.1]{hl}). While rational elliptic surfaces have 8 dimensional moduli, the dimension of $\calD$ is 9, and so the space of rational elliptic surfaces defines an $\SL_2$-invariant hypersurface. This hypersurface corresponds to the 12 element subsets of $\bP^1$ that admit an equation which is the sum of a cube and a square. 

Heckman and Looijenga obtain a compactification $\mathscr{M}^*$ of the moduli space of rational elliptic surfaces by taking the normalization of the closure of $\mathscr{M}$ inside $\calD^*$. Since they cannot compare $\mathscr{M}^*$ and $W$ directly (see Remark \ref{rmk:hl}), they also define two auxilliary compactifications: $W^*$, which is obtained as the normalization of the closure of the diagonal embedding of $\calM \hookrightarrow W \times \calD^*$, and $\calM^K$ which is a compactification via Kontsevich stable maps . The space $\calM^K$ is essentially the image in the Kontsevich space of maps to $\mb{P}^1$ of the space of twisted stable maps to $\overline{\calM}_{1,1}$ given by composing with the coarse space map $\overline{\calM}_{1,1} \to \mb{P}^1$, see \cite{av, av2,tsm}.

In \cite{hl}, the authors compare the various compactifications and show, using work of Deligne-Mostow \cite{dm}, that $\mathscr{M}^*$ can be interpreted as the Satake-Baily-Borel compactification of a complex hyperball quotient.

\begin{remark}\label{rmk:hl}There are a few points to be about the various compactifications (\cite{hl}). 
\begin{enumerate}
\item The birational map between $W$ and $\mathscr{M}^*$ \emph{does not} extend to a morphism in either direction.
\item Some points of $\mathscr{M}^*$ do not have an interpretation as the isomorphism class of a rational elliptic surface, that is, $\calM^*$ is not the coarse moduli space of some proper Deligne-Mumford stack of elliptic surfaces. 
\item The various compactifications fit together into a diagram as follows: 
 \begin{equation}
  \begin{tikzcd}
    & \calM^K \ar[d] & \\
   & W^* \ar[ld] \ar[rd] & \\
    W \ar[rr, dashrightarrow] & & \mathscr{M}^*
  \end{tikzcd}
  \end{equation}
\end{enumerate}
\end{remark}

The following theorem of \cite{hl} describes the boundary of of $\mathscr{M}$ inside $W, W^*$, and $\mathscr{M}^*$.

\begin{theorem}\cite[Section 3.3]{hl} The boundary of $\mathscr{M}$ inside $W, W^*$, and $\mathscr{M}^*$ is the union of irreducible components denoted by $W(F)$ (resp. $W^*(F)$ and $\mathscr{M}^*(F)$), where $F$ runs over the various Kodaira symbols as in Table \ref{table:sing}. 
\end{theorem}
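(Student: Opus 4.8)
Since this statement is attributed to \cite{hl}, I outline the strategy one would follow to recover it, using the machinery assembled above --- in particular the KSBA spaces $R(a)$, whose boundary has already been described explicitly (Theorems \ref{thm:dptypes}, \ref{thm:dpinfinity}, Corollaries \ref{cor:nocomponents}, \ref{cor:typeIR6}) in terms of the Kodaira types of the (pseudo)fibers of the main component(s). The organizing principle is that a point of $W$, $W^*$ or $\mathscr{M}^*$ lies outside $\mathscr{M}$ exactly when the corresponding rational elliptic surface --- or, on the $\calD^*$ side, the limit of its discriminant divisors --- acquires a fiber worse than $\mathrm{I}_1$, and such a point is classified, at the generic level, by the Kodaira symbol $F$ of that fiber, with the rest of the discriminant staying reduced. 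Writing $e(F)$ for the Euler number of an $F$-fiber (equivalently the local degree of the discriminant), the plan is to show that the closure $X(F)$ of the locus ``one fiber of type $F$, $12-e(F)$ fibers of type $\mathrm{I}_1$'' is irreducible in each of $X \in \{W, W^*, \mathscr{M}^*\}$, that these closures exhaust the boundary, and that the maximal ones among them --- the symbols collected in Table \ref{table:sing} --- are exactly its irreducible components.

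\textbf{Boundary in $W$.} Here one works on the Weierstrass parameter space $V = T/k^*$ of \S\ref{sec:GIT}. For a Kodaira symbol $F$ with $e(F) > 1$, let $V(F)^\circ \subset V$ be the locus of Weierstrass data whose associated surface has one fiber of type $F$ and $12-e(F)$ fibers of type $\mathrm{I}_1$. Using the Kodaira--N\'eron local normal forms for Weierstrass data (cf. \cite[Table 1]{calculations}), $V(F)^\circ$ is dominated by an irreducible incidence variety --- a point $q \in \bP^1$, a local normal form at $q$ realizing $F$, and a generic global extension --- so its $\SL(\Gamma_1)$-quotient $W(F)^\circ \subset W$ is irreducible of codimension $e(F)-1$, and one puts $W(F) = \overline{W(F)^\circ}$. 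Surjectivity of the reduction morphism $R(1/12+\epsilon) \to W$ of Theorem \ref{thm:egit}, combined with the description of broken elliptic surfaces in \S\ref{sec:walls}, gives $W\setminus\mathscr{M} = \bigcup_F W(F)$: a Weierstrass fibration with several bad fibers admits a Weierstrass deformation keeping one chosen bad fiber of type $F$ and breaking the others into $\mathrm{I}_1$'s, so it lies in $W(F)$. It then remains to sort out the inclusions among the $W(F)$ --- the same deformations give, for instance, $W(F) \subseteq W(\mathrm{II})$ for every additive type $F$ and $W(\mathrm{I}_n)\subseteq W(\mathrm{I}_2)$ --- keeping only the maximal ones, and to note that the GIT-unstable types ($\mathrm{II}^*,\mathrm{III}^*,\mathrm{IV}^*$, and the large-$n$ phenomena of $\mathrm{I}_n$ on the $\calD^*$ side) do not contribute components of this particular space; the bookkeeping is recorded in Table \ref{table:sing}.

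\textbf{Boundary in $\mathscr{M}^*$ and $W^*$.} These are reached through the discriminant. A one-parameter family degenerating to a surface with an $F$-fiber at $q$ has discriminants whose $e(F)$ roots coalesce at $q$, so $W(F)^\circ$ maps to an irreducible locus of $\calD^*$; pulling back its closure to the normalization $\mathscr{M}^*$ of $\overline{\mathscr{M}}\subset\calD^*$ defines $\mathscr{M}^*(F)$, and since $W^*$ sits above both $W$ and $\mathscr{M}^*$ (the diagram of Remark \ref{rmk:hl}, refined by Theorem \ref{thm:hlcompare}) one takes $W^*(F)$ to be the component of $\partial W^*$ lying over $W(F)$. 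The main obstacle is precisely the mutual compatibility of these three descriptions: the GIT limit of the Weierstrass data in $W$ and the GIT limit of its discriminant in $\calD^*$ need not determine one another --- this is already visible in Remark \ref{rem:weierstrass}, where an $\mathrm{I}_n$ fiber with $n>6$ is stable in $W$ but has GIT-unstable discriminant, and in the $j=\infty$ phenomena of Examples \ref{ex:in} and \ref{ex:jinfinity} --- so $W(F)$ and $\mathscr{M}^*(F)$ can be birational without being isomorphic, and normalizing $\overline{\mathscr{M}}$ may separate a single nested stratum of $\calD^*$ into several $\mathscr{M}^*(F)$. Resolving this is where the intermediary $R(1/6)$ is indispensable: it maps to both $W$ and $\calD^*$ (Corollary \ref{cor:gitbasecurve}) and its boundary is already stratified by fiber type, so one computes, stratum by stratum, the limiting twelve-point configuration in $\calD^*$ and the limiting Weierstrass orbit in $W$, checks that these assemble into a single irreducible $W^*(F)$ surjecting onto each, and thereby obtains the asserted common indexing of $\partial\mathscr{M}$ in $W$, $W^*$ and $\mathscr{M}^*$ by the Kodaira symbols of Table \ref{table:sing}.
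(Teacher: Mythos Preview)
The paper does not prove this statement; it is quoted from Heckman--Looijenga \cite[Section 3.3]{hl} as background for Section \ref{sec:hl}, and no argument is given here. So there is no ``paper's own proof'' to compare against --- you correctly flag this at the outset.

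Regarding your sketch on its own merits, the main structural problem is circularity. You invoke Theorem \ref{thm:hlcompare} (and the refined diagram it supplies) to organize the $W^*$ case, but in the paper Theorem \ref{thm:hlcompare} is stated and proved \emph{after} the present statement, and its proof proceeds by walking through the strata $W^*(F)$ one at a time; it therefore presupposes the very stratification you are trying to establish. Likewise, the boundary descriptions you cite from \S\ref{sec:walls} and \S\ref{sec:a6} concern the KSBA spaces $\calR(a)$, not $W$, $W^*$ or $\mathscr{M}^*$ directly; transporting information between these is precisely the content of Theorems \ref{thm:egit} and \ref{thm:hlcompare}, which sit downstream.

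Setting the circularity aside, your outline for $W$ is a reasonable heuristic but leaves the substantive steps as bare assertions: irreducibility of each incidence locus $V(F)^\circ$, the deformation claim that any non-$12\mathrm{I}_1$ Weierstrass surface lies in some $\overline{V(F)^\circ}$, and the sorting of inclusions among the $W(F)$. Note also that your ``keep only the maximal ones'' does not match Table \ref{table:sing}: the table lists $\mathrm{I}_2$ through $\mathrm{I}_9$ separately, with dimensions that behave oppositely in $W$ versus $\mathscr{M}^*$, so the indexing set is not simply the maximal strata under closure in any one of the three spaces. Pinning all of this down is exactly what Heckman--Looijenga do by direct analysis of Weierstrass data and discriminant configurations, and the present paper imports their conclusion rather than rederiving it.
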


Here we have $k \leq 5$, and $l,l' \in \{1,2,3,4\}$. The dimension of these components inside each space are in Table \ref{table:sing}. \\

\begin{table}[]
\centering
\caption{Dimension of boundary components}
\label{table:sing}
\begin{tabular}{|l|lll}
\hline
$F$                   & \multicolumn{1}{l|}{$\dim \mathscr{M}^*(F)$} & \multicolumn{1}{l|}{$\dim W(F)$} & \multicolumn{1}{l|}{$\dim W^*(F)$} \\ \hline
$\mathrm{I}_2$        & 7                                            & 7                                & 7                                  \\ \cline{1-1}
$\mathrm{I}_k$        & $9-k$                                        & $9-k$                            & $9-k$                              \\ \cline{1-1}
$\mathrm{I}_6$        & 0                                            & 3                                & 3                                  \\ \cline{1-1}
$\mathrm{I}_7$        & 5                                            & 2                                & 7                                  \\ \cline{1-1}
$\mathrm{I}_8$        & 6                                            & 1                                & 7                                  \\ \cline{1-1}
$\mathrm{I}_9$        & 7                                            & 0                                & 7                                  \\ \cline{1-1}
$\mathrm{II}$         & 7                                            & 7                                & 7                                  \\ \cline{1-1}
$\mathrm{III}$        & 6                                            & 6                                & 6                                  \\ \cline{1-1}
$\mathrm{IV}$         & 5                                            & 5                                & 5                                  \\ \cline{1-1}
$\mathrm{I}^*_0$      & 0                                            & 1                                & 1                                  \\ \cline{1-1}
$\mathrm{I}^*_{l,l'}$ & $l + l' - 1$                                 & 0                                & $l + l' -1$                        \\ \cline{1-1}
\end{tabular}
\end{table}

We now briefly describe the type of surfaces corresponding to the generic point of the boundary loci $W^*(F)$ labeled by Kodaira symbols $F$ in the above theorem (see \cite[Section 3.3]{hl}). \\

\subsubsection{Boundary loci}\label{sec:boundary}
\begin{itemize} \setlength\itemsep{1em}
\item[$\mathrm{I}_{k \geq 2}$] The surface has two components, one isotrivial $j$-invariant $\infty$ component with $k$ marked fibers, glued to a non-isotrivial component along an $\mathrm{I}_k$ fiber. See Example \ref{ex:in}. 
\item[$\mathrm{II}$] The surface has two irreducible components, a $10\mathrm{I}_1$ $\mathrm{II}$ component glued to a $2\mathrm{I}_1$ $\mathrm{II}^*$ component along a $\mathrm{II}/\mathrm{II}^*$ twisted fibers. 
\item [$\mathrm{III}$] Similar to above but with $\mathrm{III}/\mathrm{III^*}$ twisted fibers.
\item [$\mathrm{IV}$] Similar to above but with $\mathrm{IV}/\mathrm{IV^*}$ twisted fibers.
\item[$\mathrm{I}^*_0$] The surface has two irreducible components of type $6\mathrm{I}_1$ $\mathrm{I}_0^*$ and the surfaces are glued along $\mathrm{I}^*_0/\mathrm{I}_0^*$ twisted fibers. Compare with surfaces of Type $A$ in Theorem \ref{thm:dptypes}.
\item[$\mathrm{I}^*_{l,l'}$] The surface has three components $X \cup Y \cup Z$. $Y$ is isotrivial $j$-invariant $\infty$ with $l + l'$ marked nodal fibers as well as two twisted $N_1$ fibers. $X$ has $6 - l$ type $\mathrm{I}_1$ fibers and a twisted $\mathrm{I}_l^*$ glued along one of the $N_1$ fiber and $Z$ is similar with $l'$ instead of $l$. See Example \ref{ex:jinfinity}.
\end{itemize}

\hfill

Roughly speaking, the map $W^* \to W$ takes one of the above surfaces to the equivalence class of semistable orbits in  Miranda's space associated to the Weierstrass equation of the ``main component'' of the surface. Similarly, the map $W^* \to \mathcal{M}^*$ takes such a surface to the GIT semistable replacement of the base curve marked by the discriminant divisor.

\begin{theorem}\label{thm:hlcompare} There is a projective birational morphism $R(1/6) \to W^*$ from the coarse moduli spce of $\calR(1/6)$ which is an isomorphism away from the $W^*(\mathrm{I}_0^*)$ and $W^*(\mathrm{I}^*_{l,l'})$ loci. The universal family of $\calR(1/6)$ over these loci parametrizes surfaces of the type described in Section \ref{sec:boundary}. Furthermore, $\calR(1/6)$ is the minimal space above both $\mathscr{M}^*$ and $W$ extending the universal family on $\mathscr{M}$. \end{theorem}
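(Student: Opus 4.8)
The plan is to assemble the morphism $g\colon R(1/6)\to W^{*}$ out of the two maps already produced in Corollary \ref{cor:gitbasecurve}, to locate the locus where $g$ fails to be an isomorphism by comparing boundary strata, and then to deduce the minimality statement from uniqueness of KSBA stable limits. For the construction: Corollary \ref{cor:gitbasecurve} gives morphisms $R(1/6)\to W$ and $R(1/6)\to\calD^{*}$ which over the common interior $\mathscr{M}$ coincide with the two factors of the diagonal embedding $\mathscr{M}\hookrightarrow W\times\calD^{*}$ defining $W^{*}$, and the commuting triangle of that corollary shows they are globally compatible, so the induced map $R(1/6)\to W\times\calD^{*}$ has image in the closure $\overline{\mathscr{M}}$. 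Since $R(1/6)$ is normal (throughout we work with normalized moduli stacks) it factors uniquely through the normalization $W^{*}\to\overline{\mathscr{M}}$, and the resulting $g$ is birational (it is the identity on $\mathscr{M}$) and projective ($R(1/6)$ is proper and $W^{*}$ is projective, being the normalization of a closed subvariety of a product of GIT quotients). Post-composing $g$ with the projections $W^{*}\to W$ and $W^{*}\to\mathscr{M}^{*}$ furnishes the asserted morphisms from $\calR(1/6)$ to both $W$ and $\mathscr{M}^{*}$.

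To see that $g$ is an isomorphism away from $W^{*}(\mathrm{I}_0^{*})$ and $W^{*}(\mathrm{I}^{*}_{l,l'})$ I would invoke Zariski's Main Theorem: $g$ is proper and birational and $W^{*}$ is normal, so it is enough to show that $g$ has singleton fibres over $W^{*}\setminus\bigl(W^{*}(\mathrm{I}_0^{*})\cup W^{*}(\mathrm{I}^{*}_{l,l'})\bigr)$. For this I would run through the explicit classification of the surfaces parametrized by $R(1/6)$ from Section \ref{sec:walls} --- Corollary \ref{cor:typeIR6}, Proposition \ref{prop:typeii6}, and Theorems \ref{thm:type1walls} and \ref{thm:jinfwalls} --- and match the boundary strata with the list of Section \ref{sec:boundary}. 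Over a stratum $W^{*}(F)$ with $F$ of type $\mathrm{I}_k$ $(2\le k\le 9)$, $\mathrm{II}$, $\mathrm{III}$ or $\mathrm{IV}$, a surface parametrized over it has a single non-isotrivial main component, possibly glued to a partner component whose Kodaira type and attaching fiber are forced by the balancing conditions of Section \ref{tsmconditions}; the $W$-coordinate of $g$ records the Weierstrass model of that main component, the $\calD^{*}$-coordinate records the discriminant divisor, and together these reconstruct the whole surface, so $g$ is injective there. The remaining assertion about the universal family over $W^{*}(\mathrm{I}_0^{*})$ and $W^{*}(\mathrm{I}^{*}_{l,l'})$ is then the complementary computation of Section \ref{sec:walls}: over $W^{*}(\mathrm{I}_0^{*})$ the family on $\calR(1/6)$ is the two-component slc surface glued along a twisted $\mathrm{I}_0^{*}/\mathrm{I}_0^{*}$ pseudofiber (Type B of Theorem \ref{thm:dptypes}), and over $W^{*}(\mathrm{I}^{*}_{l,l'})$ it is the three-component configuration with an isotrivial $j=\infty$ bridge of Example \ref{ex:jinfinity} (and the chains of Remark \ref{rmk:jinfinity}), reproducing the Section \ref{sec:boundary} description; these are precisely the data $g$ forgets, which is why it is not an isomorphism along those loci.

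For minimality, let $\calY$ be a normal proper Deligne--Mumford stack carrying a family of (slc) surfaces restricting to the universal family over $\mathscr{M}$ and admitting morphisms to $W$ and to $\mathscr{M}^{*}$ extending those on $\mathscr{M}$; I want to extend the identity on $\mathscr{M}$ to a morphism $\calY\to\calR(1/6)$. By Theorem \ref{thm:gg} it suffices to show that for a DVR $A$ and a map $\Spec A\to\calY$ with generic point in $\mathscr{M}$ the limit in $R(1/6)$ depends only on the limit in $\calY$. The family on $\calY$ restricts to a one-parameter degeneration of $(1/6)$-weighted rational elliptic surface pairs over the punctured disc; since $\calR(1/6)$ is separated and proper this degeneration has a unique KSBA stable limit in $R(1/6)$, and by uniqueness of stable limits the special fibre of the family on $\calY$ must agree with the universal family of $\calR(1/6)$ at that limit point. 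Concretely, running the stable reduction algorithm of Section \ref{sec:walls} the limit is computed from the Weierstrass model of the main component and the GIT-stable replacement of the discriminant, i.e.\ from the images in $W$ and $\mathscr{M}^{*}$, hence from the limit point of $\calY$. Therefore the rational map $\calY\dashrightarrow\calR(1/6)$ extends to a morphism, automatically compatible with the maps to $W$ and $\mathscr{M}^{*}$. The step I expect to be the main obstacle is the boundary comparison in the second paragraph --- verifying precisely that $g$ is injective rather than contracting something over the high-dimensional strata $W^{*}(\mathrm{I}_7)$, $W^{*}(\mathrm{I}_8)$, $W^{*}(\mathrm{I}_9)$, and disentangling the possibly long isotrivial $j=\infty$ chains of Remark \ref{rmk:jinfinity} when matching the $\mathrm{I}^{*}_{l,l'}$ strata; the construction of $g$ and the minimality argument are comparatively formal given Corollary \ref{cor:gitbasecurve}, Theorem \ref{thm:gg}, and properness of $\calR(1/6)$.
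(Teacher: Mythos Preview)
Your construction of $g$ and the Zariski's Main Theorem argument for the isomorphism off the $\mathrm{I}_0^*$ and $\mathrm{I}^*_{l,l'}$ loci are essentially what the paper does, only spelled out in more detail; the paper simply invokes the universal property of $W^*$ for normal varieties dominating both $W$ and $\mathscr{M}^*$ and then runs through the strata exactly as you propose.

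There is, however, a genuine inconsistency in your minimality paragraph. You write that ``running the stable reduction algorithm \ldots\ the limit is computed from the Weierstrass model of the main component and the GIT-stable replacement of the discriminant, i.e.\ from the images in $W$ and $\mathscr{M}^*$''. This cannot be right: if the limit in $R(1/6)$ were determined by the images in $W$ and $\mathscr{M}^*$, then $g$ would be injective everywhere, contradicting your own second paragraph, which establishes that over $W^*(\mathrm{I}_0^*)$ and $W^*(\mathrm{I}^*_{l,l'})$ the map $g$ forgets the moduli of the components (a $7$-dimensional fibre over a $1$-dimensional base in the $\mathrm{I}_0^*$ case, for instance). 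So that sentence should be deleted.

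The sentence immediately before it is the correct mechanism: since $\calY$ carries a universal family, the fibre over the limit point in $\calY$ is determined by that point; by separatedness of the KSBA functor this fibre must already be the $(1/6)$-stable limit, hence determines the point of $R(1/6)$. That is the argument you should keep and make precise (in particular you need that $\calY$'s family is a family of $(1/6)$-stable pairs, not merely slc surfaces, so that the fibre over the closed point literally is a point of $\calR(1/6)$). The paper itself is quite terse here: it simply observes that over the exceptional strata $W^*$ fails to be a coarse moduli space while $R(1/6)$ is one for exactly the surfaces those strata label, and declares minimality on that basis. Your more formal factorization argument via Theorem \ref{thm:gg} is a reasonable strengthening, provided you drop the contradictory ``Concretely'' clause and instead lean on the uniqueness-of-stable-limits step you already stated.
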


\begin{proof} First we show that there is a morphism $R(1/6) \to W^*$. $W^*$ is universal for dominant morphisms $X \to W$ and $X \to \mathscr{M}^*$ from a normal variety $X$ that agree over $\mathscr{M}$. By construction $R(1/6)$ is normal and so the existence of $R(1/6) \to W^*$ follows. Let us denote this map by $\varphi$. 

Next one can check by the explicit description of limits in $\calR(1/6)$ given in Sections \ref{sec:walls} and \ref{sec:a6} that $\varphi$ is a bijection over strata $W^*(F)$ for $F \neq \mathrm{I}_0^*, \mathrm{I}_{l,l'}$. Indeed for $F = \mathrm{I}_k$, $2 \le k \le 6$, $\mathrm{II}$, $\mathrm{III}$ and $\mathrm{IV}$ the stratum $W^*(F)$ is the coarse moduli space of Weierstrass surfaces containing an $F$ singular fiber since these correspond to irreducible surfaces whose Weierstrass equation is GIT stable. Thus $\varphi$ is a bijection over these strata as $R(1/6)$ is a coarse moduli space of surfaces parametrized by $\calR(1/6)$. 

The strata $W^*(\mathrm{I}_k)$ for $k = 7,8,9$ parametrize surfaces $X \cup Y$ where $X$ is a trivial surface with $k$ marked fibers and $Y$ is an $\mathrm{I}_k$ Weierstrass surface. The configuration of marked fibers on $X$ is GIT stable in $\calD^*$ and $Y$ is GIT stable in $W$. Therefore $W^*(\mathrm{I}_k)$ is a coarse moduli space for such surfaces. Over this locus $\calR(1/6)$ parametrizes pseudoelliptic models of these same surfaces as in Example \ref{ex:in} and so $\varphi$ is bijective on this locus on the level of coarse moduli spaces. By Zariski's Main Theorem, $\varphi$ is an isomorphism on this locus where it is bijective. 

Over the $W^*(\mathrm{I}_0^*)$, the $\calR(1/6)$ parametrizes pseudoelliptic surfaces $X \cup Y$ glued along a twisted $\mathrm{I}_0^*$ pseudofiber and the map $\varphi$ takes such a surface to the $j$-invariant of the $I_0^*$ fiber. In particular, this stratum in $R(1/6)$ is the $7$-dimensional coarse moduli space for rational elliptic surfaces glued along an $\mathrm{I}_0^*$ fiber while $W^*(\mathrm{I}_0^*)$ is a $1$-dimensional stratum parametrizing only the $j$-invariant. Thus the universal family of $\mathscr{M}$ does not extend over this locus. 

Similarly, over the $W^*(\mathrm{I}_{l,l'}^*)$ locus, $R(1/6)$ is the coarse moduli space for surfaces $X \cup Y \cup Z$ as in Example \ref{ex:jinfinity} and Remark \ref{rmk:jinfinity} where $X$ and $Z$ are $\mathrm{I}_l^*$ and $\mathrm{I}_{l'}^*$ pseudoelliptic surfaces and $Z$ is a chain of isotrivial $j$-infty pseudoelliptic surfaces glued along twisted $\mathrm{N}_1$ fibers. The map $\varphi$ takes such a surface to the GIT semistable replacement of the configuration of marked fibers on the components $Z$. In particular, it forgets the information of $X$ and $Y$ so again the locus $W^*(\mathrm{I}_{l,l'}^*)$ is not a coarse moduli space for the type of surfaces it corresponds to and the universal family over $\mathscr{M}$ does not extend. 

This exhausts the list of strata and shows that $R(1/6) \to W^*$ is an isomorphism away from the locus where $W^*$ is not a coarse moduli space of surfaces. Furthermore, over this locus $R(1/6)$ is a coarse moduli space for precisely the surfaces the strata in $W^*$ correspond to and so $\calR(1/6)$ is the minimal stack over which the universal family of surfaces extends.  \end{proof}

\begin{remark} $R(1/6)$ and $W^*$ are isomorphic along the boundary component corresponding to $\mathrm{I}^*_{4,4}$, but the universal families are different. In the universal family of $\calR(1/6 + \epsilon)$ such surfaces have contracted to a point, but there is a \emph{unique} rational elliptic surface with an $\mathrm{I}^*_4$ fiber (see \cite{persson}). 
\end{remark}

\subsection{Relations to Baily-Borel compactifications}\label{sec:bb}
Using period mappings, Deligne-Mostow show \cite{dm} (see also \cite[Theorem 7.7]{hl}) that there is an isomorphism $\calD^* \cong  (_{\Gamma}\!\backslash\!^{\mathbb{B}})^*$ between the GIT compactification and the Baily-Borel compactification of the ball quotient of a 9-dimensional complex ball.  The rough idea is that given $\mathscr{D}$, one can take the cyclic cover $C \to \bP^1$ of degree 6 totally ramified in $\mathscr{D}$ and study the Jacobian $J(C)$. 

One can ask how the compactification $\mathscr{M}^*$ relates to the quotient $\BB$ introduced above. Indeed, Heckman-Looijenga show that $\mathscr{M}^* \cong\BBO$, a ball quotient obtained by taking the quotient of a (explicit) hyperball $\mathbb{B}_0 \subset \mathbb{B}$. This yields the following isomorphism of arrows \cite[Theorem 9.2]{hl}: 

 \begin{equation}
  \begin{tikzcd}
    \mathscr{M}^* \ar[r, "\cong"] \ar[d] & \BBO \ar[d] \\
    \calD^* \ar[r, "\cong"] & \BB
  \end{tikzcd}
  \end{equation}

Finally, they show (see \cite[Example 10.13]{hl} or \cite[Corollary 7.2]{loo}) that $W^*$ can be realized as the modification (a blowup followed by contraction of a specific locus) of $\BBO$.

\subsection{Relation to Laza-O'Grady}\label{sec:observations}
We urge the reader to consult \cite{ol} and \cite[Section 2]{lo2} for more details. Work of Heckman-Looijenga, and more generally Looijenga studied the problem of understanding the birational map between GIT compactifications ($\calM^{G}$) and Baily-Borel compactifications ($\calM^*$) of large classes of varieties (e.g. rational elliptic surfaces or degree $d$ polarized K3 surfaces with ADE singularities). Baily-Borel showed that $\calM^*$ can be interpreted as $\Proj(R(\calM, \lambda))$, where $\lambda$ is the Hodge bundle. Looijenga then showed that in many cases, the GIT quotient $\calM^{G}$ is realized as $\Proj(R(\calM, \lambda + \Delta))$, where $\Delta$ is some geometrically meaningful divisor. 

Looijenga's insight was that to interpolate between the spaces, one must first $\bQ$-factorialize the divisor $\Delta$, perform a series of specific flips, and then contract the strict transform of $\Delta$.  While this is true for certain cases (e.g. degree 2 K3 surfaces), it is not true for example, in some more involved cases, such as EPW sextics. Instead, Laza-O'Grady (see \cite{ol, lo2, lo3}) initiate the so-called ``Hassett-Keel-Looijenga" program to show that interpolating between $\calM^{G}$ and $\calM^*$ can be seen via varying the coefficient $\beta$ in $\Proj(R(\calM, \lambda + \beta \Delta)$. However, without a modular interpretation, it is hard to analyze the boundary strata and morphisms between these spaces as one varies the coefficient.

In the setting of this paper, we saw that the various KSBA compactifications $\calR(a)$ map to the GIT, Satake-Bailey-Borel, and intermediate compactification $W, \mathscr{M}^*$ and $W^*$ respectively. One may hope that this relation to KSBA is true in other settings studied by Looijenga and Laza-O'Grady. \\

\textbf{Key proposal:} Along similar lines to Laza-O'Grady, who suggest that you can use a $\Proj$ construction to interpolate between GIT and Baily-Borel compactifications, we propose that appropriate KSBA compactifications as one varies the coefficients should interpolate between the GIT and Bailey-Borel compactifications in general. By finding maps from KSBA compactifications to the $\Proj$ constructions appearing above, one can hope to explicitly study and characterize the boundary components of these $\Proj$ in terms of the geometry of stable surface pairs.

\bibliographystyle{alpha}
\bibliography{rational}

\end{document}